\title{Lifting in compact covering spaces for fractional Sobolev mappings}
\author{Petru Mironescu}
\address{Univ Lyon\\
Universit\'e Claude Bernard Lyon 1\\
CNRS UMR 5208, Institut Camille Jordan
\\
F-69622 Villeurbanne, France}
\email{mironescu@math.univ-lyon1.fr}
\address{Simion Stoilow Institute of Mathematics of the Romanian Academy\\
 Calea Grivi\c tei 21\\
 010702 Bucure\c sti\\ 
 Rom\^ania
}
\author{Jean Van Schaftingen}
\address{Universit\'e catholique de Louvain\\ 
Institut de Recherche en Math\'ematique et Physique\\
Chemin du Cyclotron 2 bte L7.01.01\\
1348 Louvain-la-Neuve\\
Belgium}
\email{Jean.VanSchaftingen@uclouvain.be}
\thanks{This work has been initiated  during a long term visit  of P. Mironescu at the  Simion Stoilow Institute of Mathematics of the Romanian Academy; he thanks the Institute and the Centre Francophone en Math\'ematiques in Bucharest for their support on that occasion. 
J. Van Schaftingen was supported by the Mandat d'Impulsion Scientifique F.4523.17, ``Topological singularities of Sobolev maps'' of the Fonds de la Recherche Scientifique--FNRS. }
\date{July 2, 2019}
 \definecolor{db}{rgb}{0.0,0.0,0.8} 
\definecolor{dg}{rgb}{0.0,0.55,0.14}
\definecolor{dr}{rgb}{0.5,0,0.07}
\newtheorem{theorem}{Theorem}
\newtheorem{proposition}{Proposition}[section]
\newtheorem{lemma}[proposition]{Lemma}
\newtheorem{corollary}[proposition]{Corollary}
\theoremstyle{definition}
\theoremstyle{remark}
\newtheorem{remark}[proposition]{Remark}
\numberwithin{equation}{section}
\newcommand{\abs}[1]{{\lvert #1 \rvert}}
\newcommand{\seminorm}[1]{{\lvert #1 \rvert}}
\newcommand{\floor}[1]{{\lfloor #1 \rfloor}}
\newcommand{\dif}{\,\mathrm{d}}
\newcommand{\Rset}{\mathbb{R}}
\newcommand{\Zset}{\mathbb{Z}}
\newcommand{\Sset}{\mathbb{S}}
\newcommand{\Cset}{\mathbb{C}}
\newcommand{\Bset}{\mathbb{B}}
\newcommand{\compose}{\,\circ\,}
\DeclareMathOperator{\dist}{dist}
\DeclareMathOperator{\tr}{tr}
\DeclareMathOperator*{\osc}{osc}
\DeclareMathOperator*{\inter}{int}
\DeclareMathOperator{\diam}{diam}
\DeclareMathOperator{\Aut}{Aut}
\DeclareMathOperator{\inj}{inj}
\newcommand{\manifold}[1]{\mathcal{#1}}
\newcommand{\lifting}[1]{\widetilde{#1}}
\keywords{Analytical obstruction; finite-sheeted covering; Riemannian covering; fractional Sobolev spaces of mappings.}
\subjclass[2010]{46E35 (58D15)}
\begin{document}
  
\begin{abstract}
  Let  $\pi : \widetilde{\mathcal{N}} \to \mathcal{N}$ be a Riemannian covering, with $\mathcal{N}$, $\widetilde{\mathcal{N}}$ smooth compact connected Riemannian manifolds. 
  If  $\mathcal{M}$ is an $m$--dimensional compact simply-connected Riemannian manifold,    $0<s<1$ and $2 \le sp< m$, we prove that every mapping $u \in W^{s, p} (\mathcal{M}, \mathcal{N})$ has a lifting  in $W^{s,p}$, i.e., we have $u = \pi \, \circ \, \widetilde{u}$ for some  mapping $\widetilde{u} \in W^{s, p} (\mathcal{M},  \widetilde{\mathcal{N}})$.  
  Combined with previous contributions of Bourgain, Brezis and Mironescu and Bethuel and Chiron, our result \emph{settles  completely} the question of the lifting in Sobolev spaces over covering spaces. 

The proof relies on an a priori estimate of the oscillations of $W^{s,p}$ maps with $0<s<1$ and $sp>1$, \emph{in dimension \(1\)}. Our argument also leads to the existence of a lifting when  $0<s<1$ and $1<sp<2\le m$, provided there is no topological obstruction on $u$, i.e., $u = \pi \, \circ \, \widetilde{u}$ holds in this range provided $u$ is in the strong closure of $C^\infty({\mathcal{M}},  \mathcal{N})$. 

However, when $0<s<1$, $sp = 1$ and $m\ge 2$, we show that an (analytical) obstruction still arises, even in absence  of topological obstructions. More specifically, we construct some map $u\in W^{s,p}(\mathcal{M},\mathcal{N})$ in the strong closure of $C^\infty({\mathcal{M}},  \mathcal{N})$, such that $u = \pi \, \circ \, \widetilde{u}$ does not hold for any $\widetilde{u} \in W^{s, p} ({\mathcal{M}},  \widetilde{\mathcal{N}} )$.
\end{abstract}

\maketitle

\section{Introduction}
Let \(\pi \in C^\infty (\lifting{\manifold{N}}, \manifold{N})\) be a   Riemannian covering. In most of the results we present, we make the following assumptions on the Riemannian manifolds $ \lifting{\manifold{N}}$, $\manifold{N}$ and on the cover $\pi$:
\begin{gather}
\label{a1}
\manifold{N}\text{ is compact and connected},
\\
\label{a2}
\lifting{\manifold{N}}\text{ is connected}
\end{gather}
and
\begin{equation}
\label{a3}
\pi\text{ is non-trivial}.
\end{equation}
In what follows, the compactness of $\lifting{\manifold{N}}$ will play a crucial role. We distinguish between the \emph{compact case} (when $\lifting{\manifold{N}}$ is compact) and the \emph{non-compact case}  (when $\lifting{\manifold{N}}$ is non-compact).

We also consider some \(\manifold{M}\) satisfying
\begin{equation}
 \label{a4}
 \begin{split}
&\manifold{M}\text{ is an \(m\)--dimensional compact simply-connected Riemannian manifold},\\
 &\text{possibly with boundary}.
 \end{split}
\end{equation}
In particular, we cover the case where \(\manifold{M}\) is a smooth bounded simply-connected domain in $\Rset^m$. (With a slight abuse, in this case we identify $\manifold{M}$ and $\overline{\manifold{M}}$.)
 
 \smallskip
A classical result in homotopy theory states that every map \(u \in C^k (\manifold{M}, \manifold{N})\)
can be lifted in $C^k$, i.e., there exists  some map \(\lifting{u} \in C^k (\manifold{M}, \lifting{\manifold{N}})\) such that 
\( u=\pi \compose \lifting{u}\) in \(\manifold{M}\). The \emph{lifting problem for Sobolev mappings} consists in determining whether 
every map \(u \in W^{s, p} (\manifold{M}, \manifold{N})\) can be lifted in $W^{s,p}$, i.e., whether there exists  some map \(\lifting{u} \in W^{s,p} (\manifold{M}, \lifting{\manifold{N}})\) such that that 
\(u=\pi \compose \lifting{u}\).

\medskip
We pause here to describe the Sobolev semi-norm we consider. Although we briefly consider the case where $s\ge 1$, in the new results we present we always assume that $0<s<1$ and $1\le p<\infty$. For such $s$ and $p$, the adapted semi-norm is defined as follows. We let  \(d_{\manifold{M}}\) and $d_{\manifold{N}}$ denote respectively the geodesic distances  on $\manifold{M}$ and $\manifold{N}$. We embed $\manifold{M}$ into some Euclidean space $\Rset^\mu$ and consider the $m$-dimensional Hausdorff measure on $\manifold{M}$, denoted $\mathrm{d} x$. We set 
\begin{equation*}
W^{s,p}(\manifold{M}, \manifold{N}):=\bigl\{ u: \manifold{M}\to \manifold{N};\, u\text{ is measurable and }|u|_{W^{s,p}}<\infty\bigr\},
\end{equation*}
where the Gagliardo semi-norm is defined as 
\begin{equation*}
  |u|_{W^{s,p}}^p:=\int_{\manifold{M}}\int_{\manifold{M}}\frac{d_{\manifold{N}}(u(x), u(y))^p}{d_{\manifold{M}} (x,y)^{m+sp}}\dif x \dif y.
\end{equation*}
Different embeddings of $\manifold{M}$ lead to the same space $W^{s,p}(\manifold{M}, \manifold{N})$, with equivalent semi-norms.

 In the case where the target manifold $\manifold{N}$ is compact, we can as well embed it into some Euclidean space $\Rset^\nu$, and then we may replace the geodesic distance by the Euclidean one. This leads to the same space, with equivalent semi-norm. 
 The space $W^{s, p} (\manifold{M}, \lifting{\manifold{N}})$ can be defined similarly; even when \(\manifold{N}\) is compact, the covering space \(\lifting{\manifold{N}}\) need not be compact.

\medskip

We next present some previous results on lifting. When \(\pi : \Rset \to \Sset^1\) is the universal covering of the circle by the real line, i.e., in complex notation,  we have \(\pi :  \Rset \ni \lifting{x} \mapsto e^{\imath\lifting{x}} \in \Sset^1 \subset \Cset\),
Bourgain, Brezis and Mironescu \cite{Bourgain_Brezis_Mironescu_2000} have showed that every map $u\in W^{s,p}(\manifold{M}, \Sset^1)$ has a lifting unless either \(1 \le sp < 2 \le m\) or [\(0 < s < 1\) and \(1 \le sp < m\)].  
Bethuel and Chiron \cite{Bethuel_Chiron_2007} have proved that the \emph{same conclusion} holds, more generally, in the  \emph{non-compact case}, under the assumptions \eqref{a1}--\eqref{a4}\footnote{In \cite{Bethuel_Chiron_2007}, \(\pi : \lifting{\manifold{N}} \to\manifold{N}\) is assumed to be the universal covering of $\manifold{N}$, but  the proofs there use only the assumptions \eqref{a1}--\eqref{a4}.}. The proof in \cite{Bethuel_Chiron_2007} relies, among other ingredients, on the  existence of a ray (i.e., an isometrically embedded real half-line) in any non-compact  connected Riemannian manifold. 
The \emph{compact case} was only partially settled in \cite{Bethuel_Chiron_2007}, one of the difficulties  in \cite{Bethuel_Chiron_2007} arising from the non-existence of rays in this case.  More specifically, the case where $0<s<1$ and $2\le sp<m$ was left open in \cite{Bethuel_Chiron_2007}. 

Our main result,  Theorem~\ref{theorem_intro_lifting} below,
completes their analysis\footnote{We exclude from our analysis the case where $m=1$, and thus $\manifold{M}$ is a bounded interval. In this case, the lifting property holds for any $s$ and $p$ \citelist{\cite{Bourgain_Brezis_Mironescu_2000}\cite{Bethuel_Chiron_2007}}.}.

\begin{theorem}
  \label{theorem_intro_lifting}
  Assume \eqref{a1}--\eqref{a4}, with $\lifting{\manifold{N}}$ compact and $m=\dim \manifold{M}\ge 2$.\\
  Then exactly one of the following holds. 
  \begin{enumerate}
  \renewcommand{\labelenumi}{{\rm{(\alph{enumi})}}}
\renewcommand{\theenumi}{\Alph{enumi}}
    \item Every map \(u \in W^{s, p} (\manifold{M}, \manifold{N})\) can be lifted into a map \(\lifting{u} \in W^{s, p} (\manifold{M}, \lifting{\manifold{N}})\).
   \item  \(1 \le sp < 2\).
  \end{enumerate}
\end{theorem}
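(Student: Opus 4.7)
The theorem asserts a sharp dichotomy, and the exclusivity of (A) and (B) follows from the counterexamples already constructed in \cite{Bourgain_Brezis_Mironescu_2000} (for $\manifold{N}=\Sset^1$) and in \cite{Bethuel_Chiron_2007}: in the range $1 \le sp < 2$ with $m \ge 2$ there exists some $u \in W^{s,p}(\manifold{M}, \manifold{N})$ admitting no $W^{s,p}$ lift. So the substantive content is exhaustiveness, i.e.\ producing a lift whenever $sp \notin [1,2)$. The sub-range $sp < 1$ is already treated in \cite{Bethuel_Chiron_2007}, and the sub-range $sp \ge m$ is essentially classical: when $sp > m$ the Morrey–Sobolev embedding $W^{s,p}(\manifold{M}) \hookrightarrow C(\manifold{M})$ gives a continuous representative that lifts continuously by simple-connectedness of $\manifold{M}$; when $sp = m$, the map is VMO and one invokes VMO-lifting (which again uses $\pi_1(\manifold{M}) = 0$). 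The genuinely new range, where compactness of $\lifting{\manifold{N}}$ is decisive, is $0 < s < 1$ and $2 \le sp < m$, and this is where I would focus.

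For that range I would combine a Fubini-type slicing with the a priori 1D oscillation estimate announced in the abstract. Since $sp \ge 2 > 1$, for almost every curve $\gamma$ in a $1$-dimensional foliation of $\manifold{M}$ (e.g.\ almost every line parallel to a coordinate axis in a chart), the restriction $u|_\gamma$ belongs to $W^{s,p}(\gamma, \manifold{N})$ with $sp > 1$ and so admits a continuous representative. The oscillation estimate then provides uniform quantitative control of $\osc_{I'} u|_\gamma$ over short sub-arcs $I' \subset \gamma$ by the local Gagliardo energy of $u|_\gamma$. Compactness of $\lifting{\manifold{N}}$ furnishes a uniform injectivity radius $\rho_0 > 0$ for $\pi$, in the sense that $\pi^{-1}(B_{\rho_0}(y))$ is, for every $y \in \manifold{N}$, a disjoint union of isometric copies of $B_{\rho_0}(y)$; hence on any sub-arc of $\gamma$ on which $\osc u|_\gamma < \rho_0$, the map $u|_\gamma$ lifts uniquely, once a basepoint is fixed, via local inverses of $\pi$.

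Assembling these curvewise lifts into a global measurable $\lifting{u} : \manifold{M} \to \lifting{\manifold{N}}$ with $\pi \compose \lifting{u} = u$ uses simple-connectedness of $\manifold{M}$, which forces monodromy to vanish. Whether this assembly is performed directly or through an approximation scheme—lifting smooth $u_n$ classically and passing to a $W^{s,p}$ limit—the decisive step, and what I expect to be the main obstacle, is a uniform $W^{s,p}$ bound on the lift. Its proof splits the Gagliardo double integral over two regimes. On \emph{good} pairs $(x,y)$, where the oscillation of $u$ on $B(x,|x-y|)$ is less than $\rho_0$, the local-isometry property of $\pi$ gives $d_{\lifting{\manifold{N}}}(\lifting{u}(x), \lifting{u}(y)) = d_{\manifold{N}}(u(x), u(y))$, so that the contribution is immediately bounded by $|u|_{W^{s,p}}^p$. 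On \emph{bad} pairs one only knows $d_{\lifting{\manifold{N}}} \le \diam \lifting{\manifold{N}}$, and the heart of the argument is to show that the set $\{(x,y) : \osc_{B(x,|x-y|)} u \ge \rho_0\}$ is small enough, against the singular kernel $|x-y|^{-(m+sp)}$, that its contribution is finite. Quantifying this decay through the 1D oscillation estimate, upgraded from generic slices to the full $m$-dimensional body $\manifold{M}$ by a Fubini/maximal-function argument, is precisely where the hypothesis $sp \ge 2$ is used in a critical quantitative way.
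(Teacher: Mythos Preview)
Your outline has the right ingredients---slicing, the one--dimensional oscillation control, approximation by smoother maps, and the good/bad dichotomy driven by the injectivity radius---but two points diverge from the paper in ways that matter.

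\medskip
\textbf{Where the good/bad split lives.} You propose to split the \(m\)--dimensional Gagliardo integral according to whether \(\osc_{B(x,|x-y|)} u < \rho_0\), and then to show the bad set is small against the kernel \(|x-y|^{-(m+sp)}\). This would require an \(m\)--dimensional reverse oscillation inequality, which is not available when \(sp<m\) (Morrey fails). The paper sidesteps this entirely: the good/bad analysis is carried out \emph{only in one dimension}. On each slice one has the pointwise bound
\[
d_{\lifting{\manifold{N}}}(\lifting{u}(x),\lifting{u}(y))\le \frac{\diam(\lifting{\manifold{N}})}{\inj(\manifold{N})}\,\osc_{[x,y]} u,
\]
valid for \emph{all} pairs \(x,y\) on the slice (it absorbs both regimes at once), and then the one--dimensional reverse oscillation inequality \eqref{b2} gives \(|\lifting{u}|_{W^{s,p}(I)}^p\lesssim |u|_{W^{s,p}(I)}^p\) directly. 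The passage to \(m\) dimensions is then a pure Fubini step via the equivalence \(|f|_{W^{s,p}(\manifold{C})}^p\sim\sum_i\int |f_{\widehat x_i}|_{W^{s,p}((0,\ell))}^p\,d\widehat x_i\) on cubes, plus a patching lemma. No maximal function, no \(m\)--dimensional oscillation, and---crucially---this quantitative estimate needs only \(sp>1\), not \(sp\ge 2\).

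\medskip
\textbf{Where \(sp\ge 2\) actually enters.} It is not in the analytic estimate but in the \emph{topology of the approximation scheme}. The paper approximates \(u\) (after extension) by maps \(u_n\) that are continuous off the dual skeleton \(\manifold{U}^{m-j-1}\) of an \(\varepsilon_n\)--grid, with \(j=\lfloor sp\rfloor\). When \(sp\ge 2\) this skeleton has codimension at least \(3\), so its complement in the simply-connected domain \(\manifold{M}'\) is still simply-connected (a short general-position lemma), and a \emph{global} continuous lifting \(\lifting{u}_n\) exists on that complement. One then applies the a~priori estimate to \(\lifting{u}_n\) and passes to the limit. In the range \(1<sp<2\) this topological step fails (the singular set has codimension \(2\)), which is why the paper's companion statement in that range requires \(u\) to lie in the strong closure of \(C^\infty(\manifold{M},\manifold{N})\): then one approximates by genuinely smooth maps, which lift globally for free, and the same \(sp>1\) estimate finishes the job. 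Your proposal conflates the analytic and topological roles of \(sp\ge 2\); separating them is the main structural insight of the paper's argument.
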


The compact  case covers as important examples the real projective spaces $\Rset{\mathbb P}^m$, with universal covering space $\Sset^m$, which is relevant 
in the theory liquid crystals \citelist{\cite{Ball_Zarnescu_2011}\cite{Mucci_2010}} and  the \(d\)--fold covering of the circle, with $d\ge 2$, corresponding to \(\manifold{N}=\lifting{\manifold{N}}=\Sset^1\) and,  in complex notation\footnote{Strictly speaking, the metrics should be adapted by a constant conformal factor so that the mapping is a local Riemannian isometry.},  \( \pi (\lifting{x}) = \lifting{x}^d\). In this latter case, the lifting problem  is also known as the \(d\)th root problem. The solution of this problem is positive unless \(1 \le sp < 2 \le m\) \citelist{\cite{Bethuel_Chiron_2007}\cite{Mironescu_2010}}; 
the original proof of this fact is based  on the existence of liftings over the universal covering of \(\Rset\) by \(\Sset^1\) in the sum \((W^{s, p} + W^{1, sp}) (\manifold{M}, \Rset)\) \citelist{\cite{Mironescu2008}\cite{Mironescu_preprint}} and on the fractional Gagliardo--Nirenberg interpolation inequality \cite{gnp}. Our above result provides an alternative argument to the \(d\)th root problem. 

\smallskip
As noted by Bethuel \cite{bethueliff}, Theorem~\ref{theorem_intro_lifting} has as a consequence that, under the assumptions that 
\(p \ge 3\), the fundamental group \(\pi_1 (\manifold{N})\) is finite and the homotopy groups \(\pi_2 (\manifold{N}), \dotsc, \pi_{\floor{p - 1}} (\manifold{N})\) are trivial, then
the trace operator 
\begin{equation*}
W^{1,p}(\manifold{M}\times (0,1), \manifold{N})\ni f\mapsto \tr f\in W^{1-1/p,p}(\manifold{M}, \manifold{N})
\end{equation*}
is surjective. We will come back to this in a subsequent work  \cite{Mironescu_VanSchaftingen_Traces}.

\smallskip
Returning to the lifting question, 
it is instructive to compare the above picture with the one in the non-compact case, already completed in 
\cite{Bethuel_Chiron_2007}. 

\begin{theorem}[Bethuel and Chiron \cite{Bethuel_Chiron_2007}]
  \label{b1}
  Assume \eqref{a1}--\eqref{a4}, with $\lifting{\manifold{N}}$ non-compact and $m=\dim \manifold{M}\ge 2$.\\
Then exactly one of the following holds.
  \begin{enumerate}
  \renewcommand{\labelenumi}{{\rm{(\alph{enumi})}}}
\renewcommand{\theenumi}{\Alph{enumi}}
    \item Every map \(u \in W^{s, p} (\manifold{M}, \manifold{N})\) can be lifted into a map \(\lifting{u} \in W^{s, p} (\manifold{M}, \lifting{\manifold{N}})\).
   \item  \(1 \le sp < 2\) or $[0<s<1\text{ and }1\le sp<\dim \manifold{M}]$.
  \end{enumerate}
\end{theorem}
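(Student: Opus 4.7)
My plan is to establish the dichotomy by proving both directions separately: if condition (b) fails then every $u$ admits a $W^{s,p}$ lifting (the positive direction), and if (b) holds then there is some $u$ without any lifting (the negative direction).

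For the \emph{positive direction}, suppose (b) fails; then either $sp\ge m$, or $sp<1$, or [$s\ge 1$ and $sp\ge 2$]. I would split into three cases. When $sp\ge m$, the fractional Morrey-type embedding gives that $u$ admits a continuous representative, and classical covering space theory produces a continuous lift $\lifting{u}$; the $W^{s,p}$ bound on $\lifting{u}$ follows from the local Lipschitz character of $\pi^{-1}$ (using compactness of $u(\manifold{M})$ in $\manifold{N}$). When $s\ge 1$ and $sp\ge 2$, I would import the Bourgain--Brezis--Mironescu strategy: approximate $u$ by smooth maps, lift each smooth approximation classically, and pass to the limit using a $W^{1,sp}$ estimate for the lifts. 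The genuinely delicate case is $sp<1$, and this is where the ray in $\lifting{\manifold{N}}$ (provided by non-compactness) enters. I would take a Whitney-type partition of $\manifold{M}$ into small cells on which $u$ is close in measure to a constant, choose local lifts on each cell, and adjust these local lifts (by deck transformations) so as to place successive values near a ray $\gamma:[0,\infty)\to\lifting{\manifold{N}}$. Because $sp<1$, the Gagliardo semi-norm of the glued map is controlled by summing $p$-th powers of the distances between lifts on adjacent cells weighted by a convergent geometric factor; the non-compactness of $\lifting{\manifold{N}}$ (via the ray) gives exactly the freedom needed to make consistent such selections without producing blowing-up cross terms.

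For the \emph{negative direction}, I would build an explicit counterexample. Pick a loop $\gamma:[0,1]\to\manifold{N}$ that does not close in $\lifting{\manifold{N}}$ (possible because $\pi$ is non-trivial), and let $\lifting{\gamma}$ be its lift, with $\lifting{\gamma}(0)\ne\lifting{\gamma}(1)$. In the range $1\le sp<2$, work in a small ball $\Omega\subset\manifold{M}$ and set $u(x)=\gamma(\theta(x)/(2\pi))$, with $\theta$ the polar angle in a chosen two-plane, extending $u$ by a constant outside $\Omega$; a direct Gagliardo computation gives $\lvert u\rvert_{W^{s,p}}<\infty$ exactly when $sp<2$. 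Any measurable lift would be of the form $\lifting{u}(x)=\tau\bigl(\lifting{\gamma}(\theta(x)/(2\pi))\bigr)$ on each sector, with $\tau$ a deck transformation, so $\lifting{u}$ would have to cover the unbounded curve $\lifting{\gamma}\cdot\Zset$, and a non-compact-side adaptation of the BBM argument on the universal cover $\Rset\to\Sset^1$ then forces $\lvert\lifting{u}\rvert_{W^{s,p}}=\infty$. In the range [$0<s<1$ and $1\le sp<m$], the same idea is applied with $\theta$ replaced by a spherical angle on an appropriate sub-sphere, so that the codimension-$2$ singular set is replaced by one of codimension $\lceil sp\rceil+1$, and the same norm obstruction arises.

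The \emph{main obstacle} is Case~3 of the positive direction, namely producing a $W^{s,p}$ lift when $sp<1$. Even though the Gagliardo semi-norm for $sp<1$ tolerates jumps across hypersurfaces, one must choose the local lifts with sufficient consistency that the double integral $\int\!\!\int d_{\lifting{\manifold{N}}}(\lifting{u}(x),\lifting{u}(y))^p/d_{\manifold{M}}(x,y)^{m+sp}\dif x\dif y$ remains finite despite the fact that different sheets of the covering can be arbitrarily far apart in $\lifting{\manifold{N}}$. Exploiting the ray to parametrize coherent choices, and quantifying the resulting jump contributions cell-by-cell, is the technical core of the argument; in the compact case this construction breaks down, explaining why Theorem~\ref{theorem_intro_lifting} requires an essentially different approach.
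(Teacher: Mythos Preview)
First, note that the paper does not give its own proof of Theorem~\ref{b1}: it is quoted as the main result of Bethuel--Chiron \cite{Bethuel_Chiron_2007}, and the present paper's new contribution is the compact counterpart, Theorem~\ref{theorem_intro_lifting}. So there is no in-paper argument to compare against; one can only assess your sketch on its own terms and against what the introduction says about the Bethuel--Chiron proof.

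Your proposal has a genuine gap in the \emph{negative direction}, precisely in the range $0<s<1$ and $2\le sp<m$. This range lies in alternative~(b) only in the non-compact case (in the compact case it lies in~(a), which is exactly the content of Theorem~\ref{theorem_intro_lifting}), so it is here that the compact/non-compact distinction must manifest itself. Your proposed counterexample --- a map with a singular set of codimension $\lceil sp\rceil+1\ge 3$ --- cannot work as stated: the complement of such a set is simply-connected (this is Lemma~\ref{ra2}), so any continuous $u$ there lifts continuously to some $\lifting{u}=\lifting{\gamma}(\text{angle})$, and since $\lifting{\gamma}([0,1])$ is a bounded arc in $\lifting{\manifold{N}}$ the very same Gagliardo computation that places $u$ in $W^{s,p}$ also places $\lifting{u}$ there. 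The phrase ``the same norm obstruction arises'' is unjustified. What is actually needed in this range is an \emph{analytical} obstruction of the type described in the introduction: a smooth $\lifting{u}$ on $\manifold{M}\setminus\{a\}$ with $\lifting{u}\in W^{s,p}_{loc}\setminus W^{s,p}$ while $\pi\compose\lifting{u}\in W^{s,p}$. For this one lets $\lifting{u}$ escape to infinity along a ray in $\lifting{\manifold{N}}$ with growing frequency near the singular point, while $\pi\compose\lifting{u}$ stays tame because $\manifold{N}$ is compact. The existence of the ray is thus essential in the negative direction --- you have placed it in the wrong half of the argument.

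Conversely, the case $sp<1$ of the positive direction --- which you flag as the ``main obstacle'' requiring the ray --- does not depend on non-compactness of $\lifting{\manifold{N}}$ at all: compare the exclusion ranges in Theorems~\ref{theorem_intro_lifting} and~\ref{b1}, which agree at $sp<1$. When $sp<1$, characteristic functions of nice sets belong to $W^{s,p}$, so one can lift $u$ crudely, cell by cell, without any coherent gluing across cell boundaries; no ray is needed.
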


Theorem~\ref{b1} contains as a special case  the result established in  \cite{Bourgain_Brezis_Mironescu_2000} for \(\pi : \Rset \to \Sset^1\) the universal covering of the unit circle.

\smallskip
The proof  of Theorem~\ref{theorem_intro_lifting} relies on a new one-dimensional estimate, \eqref{b2} below, that may be of independent interest. For the sake of simplicity, we state it for real-valued continuous functions  $f\in C^0(\Rset , \Rset)$. For such $f$ and $x, y\in\Rset$, we define the oscillation of \(f\) on the interval \([x, y]\) as 
\begin{equation}
\label{b23}
{\textstyle \osc_{[x, y]} f} :=\max\{ |f(z)-f(t)|;\, z, t \in [x, y]\}.
\end{equation}

We prove that, for $0<s<1$ and $1<p<\infty$ such that $sp>1$, we have the \emph{reverse oscillation inequality}
\begin{equation}
\label{b2}
\int_\Rset\int_\Rset \frac{[\osc_{[x, y]} f]^p}{|y-x|^{1+sp}}\dif x \dif y\le C_{s,p}\int_\Rset\int_\Rset \frac{|f(y)-f(x)|^p}{|y-x|^{1+sp}}\dif x \dif y.
\end{equation}
The terminology \enquote{reverse inequality} refers to the fact that, since  $\osc_{[x, y]} f \ge |f(y)-f(x)|$, we  have, for any $0<s<1$ and $1\le p<\infty$, 
\begin{equation}
\label{b3}
\int_\Rset\int_\Rset \frac{|f(y)-f(x)|^p}{|y-x|^{1+sp}}\dif x \dif y\le 
\int_\Rset\int_\Rset \frac{[\osc_{[x, y]} f]^p}{|y-x|^{1+sp}}\dif x \dif y.
\end{equation}
Our result \eqref{b2} is that the inequality \eqref{b3} can be reversed when $sp>1$.

\medskip
We next turn to the nature of obstructions to the existence of lifting. They are of two types, topological and analytical ones. 
Topological obstructions arise when \(1 \le sp < 2\le m\), and are induced by maps which are locally of the form \(u (y, z) = f (y/\abs{y})\), where \((y, z) \in \Bset^2 \times \Bset^{m - 2}\) and the map \(f \in C^0 (\Sset^1, \manifold{N})\) admits no lifting. (Here  and in the sequel, $\Bset^k$ denotes the unit ball of $\Rset^k$.) The existence of such $f$ follows from our assumption \eqref{a3}.
Analytical obstructions arise when \(0 < s < 1\) and \(1 \le sp < m\); they are related to the existence of maps $\lifting{u}: \Bset^m\to  \lifting{\manifold{N}}$ that are smooth except at the origin, such that roughly speaking $\pi\compose \lifting{u}$ oscillates much less than $\lifting{u}$, i.e., $\lifting{u}\in W^{s,p}_{loc}(\Bset^m\setminus\{0\}, \lifting{\manifold{N}})\setminus W^{s,p}(\Bset^m, \lifting{\manifold{N}})$,  
while $\pi\compose \lifting{u}\in W^{s,p}(\Bset^m, \manifold{N})$.  

%(one point and that admit a smooth lifting outside this point, but while the behavior of the map around the singularity is compatible with being in \(W^{s, p} (\manifold{M}, \manifold{N})\). 

\smallskip
Theorem~\ref{theorem_intro_lifting} has a variant which is valid when $1<sp<2$. Indeed,  the maps that include topological obstructions are not in the strong closure of $C^\infty({\manifold M}, \manifold N)$ for the $W^{s,p}$ norm (this can be seen by a simple topological argument \cite[Lemma 1 and Appendix A.2]{Bethuel_Chiron_2007}).  With this in mind, Theorem~\ref{b4} below asserts that, in absence of topological obstructions, there are no analytical obstructions. 
\begin{theorem}
\label{b4}
Assume \eqref{a1}--\eqref{a4}, with $\lifting{\manifold{N}}$ compact. Assume that $0<s<1$ and $1<sp<2\le m=\dim\manifold{M}$. 
Consider,  for a map \(u \in W^{s, p} (\manifold{M}, \manifold{N})\), the following properties:
  \begin{enumerate}
  \renewcommand{\labelenumi}{{\rm{(\alph{enumi})}}}
\renewcommand{\theenumi}{\alph{enumi}}
    \item \label{it_Ue5ef_a}$u$ can be strongly approximated by maps in $C^\infty({\manifold M}, \manifold N)$,
     \item \label{it_Ue5ef_b}
  $u$ can be weakly approximated by maps in $C^\infty({\manifold M}, \manifold N)$,
  \item \label{it_Ue5ef_c}
  $u$ has a lifting in $W^{s, p} (\manifold{M}, \lifting{\manifold{N}})$.
  \end{enumerate}
  Then
  \begin{enumerate}
   \renewcommand{\labelenumi}{{\rm{(\roman{enumi})}}}
\renewcommand{\theenumi}{\roman{enumi}}
  \item
    We have $\eqref{it_Ue5ef_a} \implies \eqref{it_Ue5ef_b} \implies \eqref{it_Ue5ef_c}$.
  \item
  If $\manifold{M}$ is diffeomorphic to a ball and \(\pi\) is the universal covering, then the properties \eqref{it_Ue5ef_a}, \eqref{it_Ue5ef_b} and \eqref{it_Ue5ef_c} are all equivalent.  \end{enumerate}
\end{theorem}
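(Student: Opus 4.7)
My plan proceeds in three steps. The implication (a) $\implies$ (b) is immediate. For (b) $\implies$ (c), I would take $(u_n) \subset C^\infty(\manifold{M}, \manifold{N})$ satisfying $\sup_n |u_n|_{W^{s,p}} < \infty$ and $u_n \to u$ a.e.\ (passing to a subsequence via the compact embedding into $L^1$). Since $\manifold{M}$ is simply connected, classical covering space theory provides smooth lifts $\lifting{u}_n \in C^\infty(\manifold{M}, \lifting{\manifold{N}})$. The cornerstone is the a priori estimate
\begin{equation*}
|\lifting{u}_n|_{W^{s,p}} \le C\, |u_n|_{W^{s,p}},
\end{equation*}
with $C$ depending only on $\pi$, $s$, $p$. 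Once granted, the compactness of $\lifting{\manifold{N}}$ provides a uniform $L^\infty$ bound; hence $(\lifting{u}_n)$ is bounded in $W^{s,p}$ (in an ambient Euclidean embedding), and a subsequence converges weakly in $W^{s,p}$ and a.e.\ to some $\lifting{v}$ with values in the closed set $\lifting{\manifold{N}}$. Passing to the a.e.\ limit in $\pi \compose \lifting{u}_n = u_n$ then yields the sought lifting.

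For the a priori estimate, I would use slicing: representing the Gagliardo semi-norm via Fubini-type integration along one-dimensional affine segments (in local charts of $\manifold{M}$) reduces the estimate to the one-dimensional inequality $|\lifting{g}|_{W^{s,p}(I)} \le C\, |g|_{W^{s,p}(I)}$ for $g \in C^0(I, \manifold{N})$ with continuous lift $\lifting{g}$ on an interval $I$. Invoking the reverse oscillation inequality \eqref{b2} (applicable since $sp > 1$), it suffices to establish the pointwise bound
\begin{equation*}
d_{\lifting{\manifold{N}}}(\lifting{g}(t), \lifting{g}(t')) \le C_0\, \osc_{[t, t']} g.
\end{equation*}
I would pick a Lebesgue number $\delta > 0$ for an evenly covered open cover of $\manifold{N}$: if $\osc_{[t, t']} g \le \delta$, then $\lifting{g}([t, t'])$ lies in a single sheet on which $\pi$ is an isometry, so the inequality holds with $C_0 = 1$; otherwise, $d_{\lifting{\manifold{N}}}(\lifting{g}(t), \lifting{g}(t')) \le \diam \lifting{\manifold{N}} \le (\diam\lifting{\manifold{N}}/\delta)\, \osc_{[t, t']} g$, the first inequality \emph{crucially using compactness} of $\lifting{\manifold{N}}$.

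For (ii), the remaining implication (c) $\implies$ (a) proceeds as follows. Since $\manifold{M}$ is diffeomorphic to a ball and $\pi$ is the universal covering, $\lifting{\manifold{N}}$ is simply connected; with $\floor{sp} = 1$, the standard density theory for fractional Sobolev maps into a simply connected target then gives that $C^\infty(\manifold{M}, \lifting{\manifold{N}})$ is strongly dense in $W^{s,p}(\manifold{M}, \lifting{\manifold{N}})$. Smooth strong approximants $\lifting{u}_n$ of $\lifting{u}$ produce smooth strong approximants $u_n := \pi \compose \lifting{u}_n$ of $u$ via the uniform Lipschitz character of $\pi$ on $\lifting{\manifold{N}}$. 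I expect the main obstacle to be the a priori estimate of step (b) $\implies$ (c): coupling the reverse oscillation inequality \eqref{b2} with the pointwise oscillation-distance bridge is precisely where compactness of $\lifting{\manifold{N}}$ enters decisively, and this input is what distinguishes the present result from the non-compact setting of Theorem~\ref{b1}.
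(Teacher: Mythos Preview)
Your proposal is correct and follows essentially the same route as the paper: the a priori estimate for smooth maps via the one-dimensional reverse oscillation inequality combined with the pointwise bridge $d_{\lifting{\manifold{N}}}(\lifting{g}(t),\lifting{g}(t'))\le C_0\,\osc_{[t,t']}g$, then a weak-compactness limiting argument, and for (c)\,$\Rightarrow$\,(a) the density of smooth maps into the simply-connected target. The only cosmetic differences are that the paper uses the injectivity radius of $\manifold{N}$ (via Lemma~\ref{lemma_small_isometry}) rather than a Lebesgue number for the small-oscillation threshold, invokes the manifold-valued form \eqref{ia3} of the reverse oscillation inequality rather than the scalar form \eqref{b2}, and makes the slicing step explicit through a finite cover by sets bi-Lipschitz to cubes together with the gluing Lemma~\ref{ik1}.
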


We specify the notion of strong convergence in Theorem~\ref{b4}, since there is no natural distance on $W^{s,p}(\manifold{M}, {\manifold{N}})$. 
We embed the manifold $\manifold{N}$ into some Euclidean space $\Rset^\nu$, and thus identify $W^{s,p}(\manifold{M}, \manifold{N})$ with $W:=\{ v\in W^{s,p}(\manifold{M}, \Rset^\nu);\, v(x)\in \manifold{N}\text{ for a.e. \(x \in \manifold{M}\)}\}$. With this identification, $u_j\to u$ in $W^{s,p}(\manifold{M}, \manifold{N})$ amounts to $u_j, u \in W$ and $u_j\to u$ in $W^{s,p}(\manifold M, \Rset^\nu)$ as \(j \to \infty\). When $\manifold{N}$ is compact or, more generally, when the sequence $(u_j)_{j \ge 0}$ takes its values into a fixed compact subset of $\manifold{N}$, this notion of convergence does not depend on the embedding.

We also specify the notion of weak convergence, since $W^{s,p}(\manifold{M}, {\manifold{N}})$ is not a linear space. 
When $0<s<1$ and $1<p<\infty$, we adopt the following convention: $u_j\to u$ weakly in $W^{s,p}(\manifold{M}, \manifold{N})$ if $u_j\to u$ a.e.\ as $j\to\infty$ and $|u_j|_{W^{s,p}(\manifold{M})}\le C$, $\forall\, j$.

\smallskip
It will be clear from its proof  that Theorem~\ref{b4} is still valid when $s=1$ and $p\ge 1$. In the case of the universal covering of $\Sset^1$, the conclusion of the theorem still holds when $s>1$ \cite[Chapters 9 and 11]{bmbook}. When $s>1$ and for a general covering, the definition of $W^{s,p}(\manifold{M}, \lifting{\manifold{N}})$ is less obvious. Adopting the definition of $W^{s,p}(\manifold{M}, \lifting{\manifold{N}})$ in \cite{Bethuel_Chiron_2007}, Theorem~\ref{b4} with $s>1$  can possibly be obtained by combining \cite[Appendix A.1]{Bethuel_Chiron_2007} with the composition result in \cite{gnp}; this is not investigated here.

\medskip
Theorem~\ref{b4} leaves open the question of existence of analytical obstructions when $0<s<1$ and  \(sp = 1\). Such obstructions \emph{do exist}, as shows our next result.

\begin{theorem}
\label{b5}
Assume \eqref{a2}--\eqref{a4}, $\manifold{N}$ connected and $m=\dim \manifold{M}\ge 2$. For $0<s<1$ and  \(p\) such that \(sp = 1\) and for every point \(a\in {\manifold{M}}\), there exists a mapping $u : \manifold{M} \to \manifold{N}$ such that 
\begin{enumerate}
 \renewcommand{\labelenumi}{{\rm{(\roman{enumi})}}}
\renewcommand{\theenumi}{\roman{enumi}}
\item
\(u \in C^\infty ({\manifold{M}}\setminus \{a\}, \manifold{N}) \cap W^{s, p}(\manifold{M}, \manifold{N})\),
  \item $u$ can be strongly approximated by maps in $C^\infty({\manifold M}, \manifold N)$,
    \item $u$ has no lifting \(\lifting{u} \in W^{s, p} (\manifold{M}, \lifting{\manifold{N}})\).
\end{enumerate}
\end{theorem}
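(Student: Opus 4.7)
The plan is to construct $u$ as the composition of a $1$-periodic loop in $\manifold{N}$ that fails to lift to a loop in $\lifting{\manifold{N}}$ with a carefully tuned radial profile blowing up at $a$, and to exclude all $W^{s, p}$ liftings via a discreteness rigidity specific to the critical exponent $sp = 1$.

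After reducing to the local model $\manifold{M} = \Bset^m$, $a = 0$ (by working in a chart around $a$ and extending by a constant value elsewhere), I would use \eqref{a2} and \eqref{a3} together with the path lifting property to obtain a smooth curve $\lifting{\gamma} \colon \Rset \to \lifting{\manifold{N}}$ whose projection $\gamma := \pi \compose \lifting{\gamma}$ is $1$-periodic, while $\lifting{\gamma}(t + 1) = g \cdot \lifting{\gamma}(t)$ for some nontrivial deck transformation $g$. The candidate map is
\begin{equation*}
  u(x) := \gamma(\phi(\abs{x})),
\end{equation*}
for a smooth profile $\phi \colon (0, 1] \to \Rset$ with $\phi(r) \to \infty$ as $r \to 0$, tuned just at the borderline of $W^{s, p}$-membership at $sp = 1$ so that the \emph{natural lift} $\lifting{v}(x) := \lifting{\gamma}(\phi(\abs{x}))$ fails to extend to a $W^{s, p}$ map on $\Bset^m$, while $u$ itself remains in $W^{s, p}(\Bset^m, \manifold{N})$. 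Property \textup{(i)} is then immediate by construction. For \textup{(ii)}, the truncations $u_M := \gamma \compose (\min(\phi, M) \compose \abs{\cdot})$ are Lipschitz on $\Bset^m$, converge to $u$ in $W^{s, p}$ by dominated convergence on the Gagliardo integral, and are in the strong $W^{s, p}$ closure of $C^\infty(\manifold{M}, \manifold{N})$ via standard mollification.

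The non-existence of a $W^{s, p}$ lifting in \textup{(iii)} will follow from a rigidity argument at the critical exponent. Given any measurable lifting $\lifting{u}$ of $u$, on the connected open set $\Bset^m \setminus \{0\}$ (here we use $m \ge 2$) there is a measurable function $h \colon \Bset^m \setminus \{0\} \to \Aut(\pi)$ with $\lifting{u}(x) = h(x) \cdot \lifting{v}(x)$. If $\lifting{u}$ were in $W^{s, p}$, then $h$, viewed locally as a selection into the discrete group $\Aut(\pi)$, would itself be $W^{s, p}$-rigid: at $sp \ge 1$, any $W^{s, p}$ map from a connected open subset of $\Rset^m$ into a discrete space is essentially constant, because $\chi_E \notin W^{s, p}$ for any set $E$ of finite perimeter with $0 < \abs{E} < \abs{\Bset^m}$ as soon as $sp \ge 1$. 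Hence $\lifting{u} = g_0 \cdot \lifting{v}$ almost everywhere for some $g_0 \in \Aut(\pi)$, and since deck transformations are isometries of $\lifting{\manifold{N}}$, $\lifting{u} \in W^{s, p}$ would force $\lifting{v} \in W^{s, p}$, contradicting the construction.

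The main obstacle lies in the construction of the profile $\phi$ producing the required asymmetry between $u \in W^{s, p}$ and $\lifting{v} \notin W^{s, p}$. In the case where $\lifting{\manifold{N}}$ is non-compact, the simplest option is to let $\phi$ grow enough that $\lifting{v}$ escapes to infinity, placing it outside even $L^p$, while $u$ remains bounded in the compact image $\gamma(\Rset)$ and its Gagliardo seminorm is kept finite by a careful calibration at $sp = 1$. When $\lifting{\manifold{N}}$ is compact, both $u$ and $\lifting{v}$ are bounded and the asymmetry must come from the longer fundamental period of $\lifting{\gamma}$ relative to $\gamma$; I expect a power profile $\phi(r) = r^{-b}$ with $b$ just above the critical exponent $s(m-1)$, or a radial superposition of infinitely many bubbles at dyadic scales each tuned to the logarithmic borderline of $W^{s, p}$, to yield the required finite/infinite dichotomy. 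The verification amounts to a careful scale-by-scale analysis of the Gagliardo integral at $sp = 1$, exploiting that on scales where $\phi$ varies by one unit the map $u$ has already traversed a full period of $\gamma$, while $\lifting{v}$ is only partway through a single period of $\lifting{\gamma}$.
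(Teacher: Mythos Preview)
Your proposal contains two genuine gaps.

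\medskip
\textbf{The power profile cannot work.} Take the simplest compact example, the $d$-fold cover of $\Sset^1$, with $\gamma(t)=e^{2\pi i t}$ and $\lifting{\gamma}(t)=e^{2\pi i t/d}$. With $\phi(r)=r^{-b}$ you get $u(x)=e^{2\pi i |x|^{-b}}$ and $\lifting{v}(x)=e^{2\pi i |x|^{-b}/d}$. But then $\lifting{v}(x)=u(d^{1/b}x)$ is a pure dilation of $u$, so by the scaling of the Gagliardo semi-norm (with $sp=1$),
\[
|\lifting{v}|_{W^{s,p}(B_1)}^p=(d^{1/b})^{1-m}\,|u|_{W^{s,p}(B_{d^{1/b}})}^p,
\]
and since $u$ is smooth away from the origin, $u\in W^{s,p}(B_1)$ if and only if $\lifting{v}\in W^{s,p}(B_1)$. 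No choice of the exponent $b$ creates the required asymmetry. The mechanism you need is not a smoothly growing phase but a profile with long \emph{plateaus} (where $\phi$ is nearly an integer, so $u\approx b$ while $\lifting{v}\approx g^k\lifting{b}$) separated by increasingly thin transition layers. The paper implements exactly this idea, but with bubbles placed in disjoint balls accumulating at $a$ rather than in concentric annuli: in each ball one puts a map equal to $\lifting{b}$ outside a thin shell and $\lifting{b}'$ inside, so that as the shell width $\delta\to 0$ the lift tends to a genuine step function (hence its $W^{s,p}$ semi-norm blows up, since $sp=1$), while the projection is constant $=b$ off a set of measure $\sim\delta$ and a direct computation keeps its semi-norm bounded. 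Stacking countably many such bubbles with suitably chosen parameters gives $\pi\compose\lifting{u}\in W^{s,p}$ with $\lifting{u}\notin W^{s,p}$.

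\medskip
\textbf{The uniqueness step assumes normality.} Your claim that any measurable lifting satisfies $\lifting{u}(x)=h(x)\cdot\lifting{v}(x)$ for some $h:\Bset^m\setminus\{0\}\to\Aut(\pi)$ uses that $\Aut(\pi)$ acts transitively on fibres, which is precisely the definition of a \emph{normal} covering. The theorem makes no such hypothesis. For non-normal coverings the paper replaces your single bad lift $\lifting{v}$ by a \emph{family} $(\lifting{u}_i)_{i\in I}$ of continuous lifts, indexed by the fibre $\pi^{-1}(\{b\})=\{\lifting{b}_i\}$, each one bad on its own disjoint family of bubbles $U_i$ and equal to $\lifting{b}_i$ elsewhere. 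One then invokes a uniqueness statement valid without normality (any $W^{s,p}$ lift agreeing with a \emph{continuous} lift on a non-null set must agree with it a.e.) to conclude that a putative $W^{s,p}$ lifting differs from every $\lifting{u}_i$ a.e., hence takes values outside $\pi^{-1}(\{b\})$ a.e.\ on the non-null set where $u\equiv b$ --- a contradiction. Your single-lift radial construction does not readily accommodate this, since you would need one bad continuous lift per fibre element, with pairwise disjoint singular supports.
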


Theorem~\ref{b5} answers negatively \cite{Mironescu_2010}*{open problem 7}.

\medskip
Our paper is organized as follows. In Section~\ref{s2} we recall some basic facts about coverings. 
In Section~\ref{s5}, which is the main contribution of this work, we prove the reverse oscillation inequality \eqref{b2} and its consequences, Theorems~\ref{theorem_intro_lifting} and \ref{b4}. 
In Section~\ref{s3} we discuss uniqueness, in a framework more general than the one of the universal covering of the circle \cite{Bourgain_Brezis_Mironescu_2000} or of universal coverings \cite{Bethuel_Chiron_2007}. This will be needed in the proof of the existence of the analytic obstruction. In Section~\ref{s4}, we prove Theorem~\ref{b5}.

\section{About coverings}
\label{s2}

Let us start by recalling some basic fact concerning the coverings.
The mapping \(\pi : {\lifting{\manifold{N}}} \to \manifold{N}\) (with $\manifold{N}$, $\lifting{\manifold{N}}$ topological spaces) is a \emph{cover} (or \emph{covering map}) whenever 
 \(\pi\) is continuous and every point \(y \in \manifold{N}\) belongs to an open set \(U \subset \manifold{N}\) \emph{evenly covered} by $\pi$, i.e., 
the inverse image \(\pi^{-1} (U)\) is a disjoint union of open sets $V_i$, $i\in I$, with \(\pi:V_i\to U\) a homeomorphism, $\forall\, i\in I$. 

If \(\manifold{N}\) is a connected topological manifold and if the covering space \({\lifting{\manifold{N}}}\) is connected, then the cardinality of the inverse image \(\pi^{-1} (\{y\})\) of a point does not depend on the point \(y \in \manifold{N}\) and is at most countable; this follows from the fact that \(\pi^{-1} (\{y\})\) is isomorphic to \(\pi_1 (\manifold{N}, y)\) \cite{Hatcher_2002}*{Proposition 1.32} combined with the fact that $\pi_1(\manifold{N}, y)$ is at most countable, $\forall\, y\in  \manifold{N}$ \cite{Lee_2011}*{Theorem 7.21}.

If \(\manifold{N}\) is a connected Riemannian manifold, then the cover  \(\pi\) induces on  \({\lifting{\manifold{N}}}\)  a unique \emph{Riemannian structure} such that the mapping \(\pi\) is a local isometry. 
Conversely, if the Riemannian manifold \({\lifting{\manifold{N}}}\) is complete and 
if the mapping \(\pi : {\lifting{\manifold{N}}} \to \manifold{N}\) is a local isometry 
(that is, the pullback \(\pi^* g\) of  the metric \(g\) of \(\manifold{N}\) coincides with the metric \(\lifting{g}\) of \(\lifting{\manifold{N}}\)), 
then  \(\pi\) is a cover \cite[Lemma 11.6]{Lee_1997}.
The local isometry property implies in particular that $\pi$  is globally a non-expansive map: 
for every \(\lifting{x}, \lifting{y} \in {\lifting{\manifold{N}}}\), we have
\[
  d_{\manifold{N}} (\pi (\lifting{x}), \pi (\lifting{y})) \le d_{{\lifting{\manifold{N}}}} (\lifting{x}, \lifting{y}),
\]
with equality everywhere if and only if the map \(\pi\) is a global homeomorphism.

\medskip
The next lemma shows that a Riemannian covering map is always an isometry on scales smaller than the injectivity radius \(\inj (\manifold{N})\)
(which is defined as the least upper bound of the radii \(\rho>0\) such that the exponential mapping at any point $y\in \manifold{N}$, restricted to a ball of radius \(\rho\) of the tangent space $T_y \manifold{N}$, is a diffeomorphism).

\begin{lemma}
\label{lemma_small_isometry}
Let \(\pi : {\lifting{\manifold{N}}} \to \manifold{N} \) be a Riemannian covering map. Assume that   \(\manifold{N}\) has positive injectivity radius \(\inj (\manifold{N}) > 0\).\\
Then for every \(\lifting{x}, \lifting{y} \in {\lifting{\manifold{N}}}\) such that \(d_{{\lifting{\manifold{N}}}} (\lifting{x}, \lifting{y}) \le \inj (\manifold{N})\), one has \( d_{{\lifting{\manifold{N}}}}(\lifting{x}, \lifting{y}) = d_{\manifold{N}}(\pi (\lifting{x}), \pi (\lifting{y}))\).
\end{lemma}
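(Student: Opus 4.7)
The plan is to reduce the lemma to a classical Riemannian geometry fact: a geodesic of length at most $\inj(\manifold{N})$ is minimizing. Concretely, I would apply the Hopf--Rinow theorem on $\lifting{\manifold{N}}$ to select a minimizing unit-speed geodesic $\lifting{\gamma} \colon [0, L] \to \lifting{\manifold{N}}$ joining $\lifting{x}$ to $\lifting{y}$, with $L = d_{\lifting{\manifold{N}}}(\lifting{x}, \lifting{y}) \le \inj(\manifold{N})$. Projecting through the local isometry $\pi$ yields a geodesic $\gamma := \pi \compose \lifting{\gamma}$ in $\manifold{N}$ from $\pi(\lifting{x})$ to $\pi(\lifting{y})$ of the same length $L$. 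Once it is established that $\gamma$ is length-minimizing, one obtains $d_{\manifold{N}}(\pi(\lifting{x}), \pi(\lifting{y})) = L = d_{\lifting{\manifold{N}}}(\lifting{x}, \lifting{y})$, which combined with the non-expansivity of $\pi$ recalled just before the lemma gives the claim.

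For the application of Hopf--Rinow I first need $\lifting{\manifold{N}}$ to be geodesically complete, which I would verify in two steps. The assumption $\inj(\manifold{N}) > 0$ ensures that every geodesic in $\manifold{N}$ can be extended by arc length $\inj(\manifold{N})$ from any of its points, so that by iteration $\manifold{N}$ itself is geodesically complete. Completeness then passes to $\lifting{\manifold{N}}$ since the path-lifting property of the cover allows any geodesic in $\manifold{N}$ starting from $\pi(\lifting{z})$ to be lifted to a curve in $\lifting{\manifold{N}}$ starting from $\lifting{z}$, which remains a geodesic because $\pi$ is a local isometry.

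The second ingredient, that a geodesic of length at most $\inj(\manifold{N})$ is minimizing, is essentially the content of the definition of $\inj(\manifold{N})$ recalled in the excerpt. Indeed, for every $\rho < \inj(\manifold{N})$, the exponential map at any point of $\manifold{N}$ is a diffeomorphism on the open tangent ball $B(0, \rho)$, which forces any geodesic of length strictly less than $\rho$ to be the unique minimizing path to its endpoint; letting $\rho \nearrow \inj(\manifold{N})$ covers all geodesics of length strictly less than $\inj(\manifold{N})$. The borderline case $L = \inj(\manifold{N})$ is handled by continuity: applying the previous fact to $\gamma|_{[0, t]}$ gives $d_{\manifold{N}}(\gamma(0), \gamma(t)) = t$ for every $t < L$, and passing to the limit $t \nearrow L$ yields $d_{\manifold{N}}(\gamma(0), \gamma(L)) = L$.

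The argument is a packaging of classical facts and I do not anticipate a genuine obstacle. The only point worth flagging is the borderline case $d_{\lifting{\manifold{N}}}(\lifting{x}, \lifting{y}) = \inj(\manifold{N})$, which cannot be read off directly from the diffeomorphism property of $\exp$ on the \emph{open} ball of radius $\inj(\manifold{N})$ and requires the short continuity step above.
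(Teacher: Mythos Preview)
Your proposal is correct and follows essentially the same route as the paper: take a minimizing geodesic upstairs, push it down through the local isometry $\pi$, and invoke the injectivity radius to conclude that the image is still minimizing. You are simply more explicit than the paper on two points it leaves implicit---the existence of a minimizing geodesic in $\lifting{\manifold{N}}$ (which you justify via completeness) and the borderline case $L=\inj(\manifold{N})$---so there is nothing to correct.
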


The positivity assumption on the injectivity radius in Lemma~\ref{lemma_small_isometry} is satisfied in particular when the manifold $\manifold{N}$ is compact.

The proof of Lemma~\ref{lemma_small_isometry} follows the strategy to prove that local isometries of complete manifolds yield covering maps \cite[proof of Lemma 11.6]{Lee_1997}.

\begin{proof}%
[Proof of  Lemma~\ref{lemma_small_isometry}]
Let \(\lifting{x}, \lifting{y} \in {\lifting{\manifold{N}}}\) satisfy  \(d_{{\lifting{\manifold{N}}}} (\lifting{x}, \lifting{y}) \le \inj (\manifold{N})\). 
Let \(\lifting{\gamma} : [0, 1] \to \lifting{\manifold{N}}\) be the natural parametrization of a  minimizing geodesic $\lifting\Gamma$  in \(\lifting{\manifold{N}}\) joining the point \(\lifting{x}\) to \(\lifting{y}\).
Since  \(\pi\) is a local isometry, \(\gamma := \pi \circ \lifting{\gamma} : [0, 1] \to \manifold{N}\) is the natural parametrization of a geodesic $\Gamma$  in \(\manifold{N}\) joining the point \(x:=\pi (\lifting{x})\) to \(y:=\pi (\lifting{y})\). Moreover, the  length   of $\Gamma$ is \( d_{{\lifting{\manifold{N}}}} (\lifting{x}, \lifting{y}) \le \inj (\manifold{N})\). By definition of the injectivity radius, this geodesic is minimal, and thus \( d_{{\lifting{\manifold{N}}}}(\lifting{x}, \lifting{y}) = d_{\manifold{N}}(\pi (\lifting{x}), \pi (\lifting{y}))\).
\end{proof}

If \(\pi : {\lifting{\manifold{N}}} \to \manifold{N}\) is a cover, its \emph{group of deck transformations} is the set
\begin{equation*}
  \Aut (\pi) 
  = \bigl\{ \tau : {\lifting{\manifold{N}}} \to {\lifting{\manifold{N}}};\, \tau \text{ is a homeomorphism and } \pi \compose \tau = \pi \bigr\}.
\end{equation*}
The set \(\Aut (\pi)\) is a group under the composition operation and is also known as the \emph{Galois group} of the cover \(\pi\). 
Assuming $\lifting{\manifold{N}}$ to be connected and $\lifting x_0\in\lifting{\manifold{N}}$, an element $\tau \in \Aut (\pi)$ is uniquely determined by $\tau(\lifting x_0)$. Therefore, if \(\manifold{N}\) is a connected topological manifold and if \({\lifting{\manifold{N}}}\) is connected, then \(\Aut(\pi)\) is at most countable.
If \(\pi\) is a Riemannian covering, then the elements of the group \(\Aut (\pi)\) are \emph{global} isometries of the manifold \(\lifting{\manifold{N}}\).

As examples of groups of deck transformations, if $\pi:\Rset\to\Sset^1$ is  the universal covering of $\Sset^1$,  then \(\Aut (\pi)\) is the group of translations of \(\Rset\) by integer multiples of \(2 \pi\) and is isomorphic to \(\Zset\), and if \(\pi:\Sset^m\to\Rset{\mathbb P}^m\) is the universal covering of the projective space $\Rset{\mathbb P}^m$,  then \(\Aut (\pi) = \{\operatorname{id}, - \operatorname{id}\}\), which is isomorphic to \(\Zset_2\).

%If \(\tau \in \Aut (\pi)\) is not the identity, then the mappings \(\tau\) does not have any fixed point in thi covering space \(\lifting{\manifold{N}}\). This implies in particular that the cardinality of the set \(\Aut (\pi)\) is at most countable.

\smallskip
A covering \(\pi\) is \emph{normal} whenever the action of \(\Aut (\pi)\) is transitive on the fibers of \(\pi\), that is, whenever,  given \(\lifting{x}, \lifting{y} \in {\lifting{\manifold{N}}}\) such that \(\pi (\lifting{x}) = \pi (\lifting{y})\), there exists an automorphism \(\tau \in \Aut (\pi)\) such that \(\lifting{y} = \tau (\lifting{x})\). Normal coverings are also known as \emph{regular coverings} or as \emph{Galois coverings}. An important case of normal covering is the universal covering of a connected Riemannian manifold \cite[Proposition 1.39]{Hatcher_2002}. 

\section{Lifting}
\label{s5}
\subsection{Proof of the reverse oscillation inequality \eqref{b2}} We consider some continuous function $f\in C^0(\manifold{I}, \Rset)$, with $\manifold{I}=(a,b)\subseteq \Rset$ some interval. Then \eqref{b2} holds on $\manifold{I}$, for some constant independent of $\manifold{I}$ and $f$. In order to prove \eqref{b2}, we start from the Morrey embedding $W^{\sigma, p}(\manifold{J})\hookrightarrow C^{0, \sigma-1/p}(\manifold{J})$, valid for any interval $\manifold{J}=(z,t)\subseteq\Rset$ and for $1/p<\sigma<1$. In a quantitative form, this embedding implies that, with a constant $C$ depending only on $\sigma$ and $p$, we have 
\begin{equation}
  \label{ha1}
  |g(t)-g(z)|\le C\, (t-z)^{\sigma-1/p}\, |g|_{W^{\sigma,p}((z,t))},\ \forall\, g\in C^0([z,t]),\ \forall\, {-\infty}<z<t<\infty .
\end{equation}
(For an elementary proof of this well-known property, see e.g.\ \cite[Lemma 3]{m_hardy}.)
In turn, \eqref{ha1} implies that 
\begin{equation}
  \label{ha2}
  {\textstyle \osc_{[x, y]} f} \le C\, (y-x)^{\sigma-1/p}\, |f|_{W^{\sigma,p}((x,y))},\ \forall\, f\in C^0(\manifold{I}),\ \forall\, a<x <y<b.
\end{equation}
We next choose some $\sigma$ such that $1/p<\sigma<s$ (this is possible, since $sp>1$) and find, via \eqref{ha2}, that\footnote{In what follows, $A\lesssim B$ stands for $A\le  C B$, with $C$ an absolute constant.}
\begin{equation*}
  \begin{split}
    \int_{\manifold{I}}\int_{\manifold{I}} \frac{[\osc_{[x, y]} f]^p}{|y-x|^{1+sp}}\dif x \dif y\lesssim &\iint_{a<x<y<b} \frac{|f|_{W^{\sigma, p}((x,y))}^p}{(y-x)^{2+(s-\sigma)p}}\dif x \dif y\\
    \lesssim & \iiiint_{a<x<t<z<y<b}\frac{|f(z)-f(t)|^p}{(z-t)^{1+\sigma p}}\frac 1{(y-x)^{2+(s-\sigma)p}}\dif x \dif y\dif z \dif t \\
    \le &\iint_{a<t<z<b}\frac{|f(z)-f(t)|^p}{(z-t)^{1+\sigma p}}\\
    &\hskip 18mm\times \left(\iint_{-\infty<x<t<z<y<\infty}\frac 1{(y-x)^{2+(s-\sigma)p}}\dif x \dif y\right)\, \dif z \dif t \\
    \lesssim &\iint_{a<z<t<b}\frac{|f(z)-f(t)|^p}{(z-t)^{1+\sigma p}}\frac{1}{(z-t)^{(s-\sigma) p}}\dif t \dif z=\frac 12 |f|_{W^{s,p}(\manifold{I})}^p,
  \end{split}
\end{equation*}
whence \eqref{b2}. \hfill$\square$

\medskip
In the same spirit, we have the following estimate for maps with values into manifolds. Let $\manifold{I}=(a,b)\subseteq \Rset$ and $u\in C^0(\manifold{I}, \manifold{N})$, where $\manifold{N}$ is a connected Riemannian manifold. By analogy with  \eqref{b23}, we define the oscillation
\begin{equation}
  \label{ia1}
  {\textstyle \osc_{[x, y]}} u :=\max\{ d_{\manifold{N}}(u(z),u(t));\, z, t \in [x, y] \}.
\end{equation}

\begin{lemma}
  \label{ia2}
  Let $0<s<1$ and $1<p<\infty$ be such that $sp>1$.\\
  Let $\manifold{N}$ be a connected Riemannian manifold.\\
  Let $\manifold{I}=(a,b)\subseteq \Rset$ and  $u\in C^0(\manifold{I}, \manifold{N})$.\\
  Then
  \begin{equation}
    \label{ia3}
    \int_{\manifold{I}}\int_{\manifold{I}}\frac{[\osc_{[x, y]} u]^p}{|y-x|^{1+sp}}\dif x \dif y\le C_{s,p}\, |u|_{W^{s,p}(\manifold{I})}^p=C_{s,p}\int_{\manifold{I}}\int_{\manifold{I}}\frac{d_{\manifold{N}}(u(x), u(y))^p}{|y-x|^{1+sp}}\dif x \dif y.
  \end{equation}
\end{lemma}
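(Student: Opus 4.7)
The plan is to reduce Lemma~\ref{ia2} to the scalar reverse oscillation inequality \eqref{b2} by establishing a manifold-valued analogue of the Morrey estimate \eqref{ha2} and then repeating the chain of inequalities in the proof of \eqref{b2} \emph{mutatis mutandis}. The only place where the scalar structure of $f$ was used was in \eqref{ha1}; once we replace it by the corresponding statement for maps with values in $\manifold{N}$, nothing else needs to change.

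The key step is therefore to prove, for $1/p < \sigma < 1$ and $u \in C^0(\manifold{I}, \manifold{N})$, the inequality
\begin{equation*}
d_{\manifold{N}}(u(t), u(z)) \le C\, (t-z)^{\sigma - 1/p}\, |u|_{W^{\sigma,p}((z,t))}, \qquad a < z < t < b.
\end{equation*}
My approach is to fix $z$ and apply the scalar Morrey inequality \eqref{ha1} to the real-valued auxiliary function $g(\tau) := d_{\manifold{N}}(u(\tau), u(z))$. By the reverse triangle inequality,
\begin{equation*}
|g(\tau) - g(\tau')| = \bigl| d_{\manifold{N}}(u(\tau), u(z)) - d_{\manifold{N}}(u(\tau'), u(z)) \bigr| \le d_{\manifold{N}}(u(\tau), u(\tau')),
\end{equation*}
so $|g|_{W^{\sigma,p}((z,t))} \le |u|_{W^{\sigma,p}((z,t))}$, and since $g(z) = 0$ the estimate follows. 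Taking the supremum over $z, t \in [x, y]$ and using monotonicity of the Gagliardo seminorm in the domain yields the manifold-valued oscillation Morrey inequality
\begin{equation*}
{\textstyle \osc_{[x, y]} u} \le C\, (y-x)^{\sigma - 1/p}\, |u|_{W^{\sigma,p}((x,y))}, \qquad a < x < y < b,
\end{equation*}
where the supremum is in fact a maximum by continuity of $u$ and compactness of $[x,y]$.

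With this replacement for \eqref{ha2} in hand, I choose $\sigma \in (1/p, s)$ (possible since $sp > 1$) and reproduce verbatim the four-line computation given for \eqref{b2}: raise the oscillation Morrey inequality to the $p$-th power, expand $|u|_{W^{\sigma,p}((x,y))}^p$ into a double integral in variables $t, z$ with $x < t < z < y$, apply Fubini to integrate in $x, y$ over $x < t$ and $y > z$, and compute the elementary integral $\iint_{x < t, \, y > z}(y-x)^{-2-(s-\sigma)p}\dif x \dif y \lesssim (z-t)^{-(s-\sigma)p}$. The remaining double integral is $\tfrac{1}{2}|u|_{W^{s,p}(\manifold{I})}^p$, giving \eqref{ia3}.

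I do not expect any serious obstacle: the only conceptual issue is the lack of a linear structure on $\manifold{N}$, and this is handled entirely by the triangle-inequality trick used to bound $|g|_{W^{\sigma,p}}$ by $|u|_{W^{\sigma,p}}$. All constants depend only on $s$ and $p$, as claimed.
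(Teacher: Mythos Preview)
Your proposal is correct and matches the paper's proof essentially verbatim: the paper also applies \eqref{ha1} to the scalar function $g(\alpha):=d_{\manifold{N}}(u(\alpha),u(z))$, invokes the triangle inequality $|g(\alpha)-g(\beta)|\le d_{\manifold{N}}(u(\alpha),u(\beta))$ to bound $|g|_{W^{\sigma,p}}$ by $|u|_{W^{\sigma,p}}$, passes to the oscillation, and then says ``continue as in the proof of \eqref{b2}''.
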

\begin{proof} Write $\manifold{I}=(a,b)$ and let $a<z<t<b$. 
  Applying \eqref{ha1} with $g(\alpha):=d_{\manifold{N}} (u(\alpha), u(z))$, $\forall\, \alpha\in [z,t]$, and using the inequality $|g(\alpha)-g(\beta)|\le d_{\manifold{N}} (u(\alpha), u(\beta))$, $\forall\, \alpha, \beta\in [z,t]$, we find that
  \begin{equation}
    \label{ib1}
    |d_{\manifold{N}}(u(t),u(z))|\le C\, (t-z)^{\sigma-1/p}\, |u|_{W^{\sigma,p}((z,t))},\  \forall\, a<z<t<b,
  \end{equation}
  and thus 
  \begin{equation}
    \label{ib2}
    {\textstyle \osc_{[x, y]} u} \le C\, (y-x)^{\sigma-1/p}\, |u|_{W^{\sigma,p}((x,y))},\  \forall\, a<x <y<b.
  \end{equation}
  
  We then continue as in the proof of \eqref{b2}.
\end{proof}

\subsection{The one-dimensional estimate for lifting}
\label{s5.2}
We assume  here that
\begin{gather}
  \label{ii1}
  0<s<1\text{ and }1<p<\infty\text{ are such that }sp>1,
  \\
  \label{ii2}
  \pi\in C^\infty(\lifting{\manifold{N}}, \manifold{N})\text{ is a Riemannian covering and }\lifting{\manifold{N}}\text{ is compact}.
\end{gather} 

Let us note that \eqref{ii2} implies that \(\manifold{N}\) is compact and thus $0<\inj(\manifold{N})<\infty$, and that $\diam(\lifting{\manifold{N}})<\infty$.

Let $\manifold{I}=(a,b)\subseteq \Rset$ and $u\in C^0(\manifold{I}, \manifold{N})$. Then we may lift $u$ as $u=\pi\compose\lifting{u}$, for some $\lifting{u}\in C^0(\manifold{I}, \lifting{\manifold{N}})$, uniquely determined by its value at some point of $\manifold{I}$.

\begin{lemma}
  \label{ha3}
  Assume \eqref{ii1}--\eqref{ii2}.\\
  Let $\manifold{I}\subseteq\Rset$ be an interval and $u\in C^0(\manifold{I}, \manifold{N})$.\\
  Then every continuous lifting $\lifting{u}\in C^0(\manifold{I}, \lifting{\manifold{N}})$ of $u$ satisfies
  \begin{equation}
    \label{ha4}
    |\lifting{u}|_{W^{s,p}(\manifold{I})}^p\le C_{s,p}\, \left(\frac{\diam(\lifting{\manifold{N}}) }{\inj(\manifold{N})}\right)^p\, |u|_{W^{s,p}(\manifold{I})}^p,
  \end{equation}
  for some absolute constant $C_{s,p}$. 
\end{lemma}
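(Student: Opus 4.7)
The plan is to derive a pointwise comparison
\[
d_{\lifting{\manifold{N}}}(\lifting{u}(x), \lifting{u}(y)) \le 2\,\frac{\diam(\lifting{\manifold{N}})}{\inj(\manifold{N})}\, {\textstyle \osc_{[x,y]}} u, \quad\forall\, x, y \in \manifold{I},
\]
and then to integrate it against the singular kernel $|y-x|^{-1-sp}$, invoking the reverse oscillation inequality of Lemma~\ref{ia2} applied to $u$.

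To establish the pointwise bound, fix $x < y$ in $\manifold{I}$, set $\delta := d_{\lifting{\manifold{N}}}(\lifting{u}(x), \lifting{u}(y))$, and split into two regimes. In the small regime $\delta \le \inj(\manifold{N})$, Lemma~\ref{lemma_small_isometry} yields $\delta = d_{\manifold{N}}(u(x), u(y)) \le \osc_{[x,y]} u$, which is more than enough. In the large regime $\delta > \inj(\manifold{N})$, the idea is to produce an intermediate point at a scale on which $\pi$ acts as an isometry. Since $\lifting{u}$ is continuous, the function $t \mapsto d_{\lifting{\manifold{N}}}(\lifting{u}(x), \lifting{u}(t))$ is continuous, takes the value $0$ at $t=x$ and a value $>\inj(\manifold{N})$ at $t=y$; the intermediate value theorem thus furnishes some $t^{\ast} \in [x, y]$ for which $d_{\lifting{\manifold{N}}}(\lifting{u}(x), \lifting{u}(t^{\ast})) = \inj(\manifold{N})/2$. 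Lemma~\ref{lemma_small_isometry} then converts this equality into $d_{\manifold{N}}(u(x), u(t^{\ast})) = \inj(\manifold{N})/2$, whence $\osc_{[x,y]} u \ge \inj(\manifold{N})/2$. Combining with the crude bound $\delta \le \diam(\lifting{\manifold{N}})$ closes the pointwise inequality.

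Once the pointwise inequality is in hand, raising to the $p$-th power, integrating over $\manifold{I} \times \manifold{I}$ against $\dif x\,\dif y/|y-x|^{1+sp}$, and applying Lemma~\ref{ia2} to $u$ directly produces \eqref{ha4}, with an absolute constant $C_{s,p}$ (stemming from \eqref{ia3}) times the geometric factor $(\diam(\lifting{\manifold{N}})/\inj(\manifold{N}))^p$.

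The main obstacle is the large regime $\delta > \inj(\manifold{N})$: the local isometry property of $\pi$ only transfers distances of size at most $\inj(\manifold{N})$, so one must exploit the continuity of $\lifting{u}$ and the intermediate value theorem to import an $\inj(\manifold{N})$-scale event into the oscillation of $u$ on $[x,y]$. Every other step is either a direct application of Lemma~\ref{lemma_small_isometry} or an invocation of the reverse oscillation inequality already proved for manifold-valued continuous maps.
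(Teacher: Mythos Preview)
Your argument is correct and follows essentially the same route as the paper: a pointwise comparison between \(d_{\lifting{\manifold{N}}}(\lifting{u}(x),\lifting{u}(y))\) and \(\osc_{[x,y]} u\) obtained from Lemma~\ref{lemma_small_isometry} together with the trivial bound \(\diam(\lifting{\manifold{N}})\), then integration against \(|y-x|^{-1-sp}\) and an appeal to Lemma~\ref{ia2}. The only cosmetic difference is the dichotomy variable---the paper splits on whether \(\osc_{[x,y]} u\le \inj(\manifold{N})\), whereas you split on \(\delta\) and make the underlying intermediate-value step explicit (at the harmless cost of a factor \(2\)).
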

\begin{proof}
  Let $\manifold{I}=(a,b)$. We have the obvious estimate
  \begin{equation}
    \label{ic1}
    d_{\lifting{\manifold{N}}}(\lifting{u}(x),\lifting{u}(y))\le \diam (\lifting{\manifold{N}}),\ \forall\, x, y\in \manifold{I}.
  \end{equation}
On the other hand, if  $x, y\in\manifold{I}$ and $\osc_{[x, y]} u \le \inj (\manifold{N})$,
  then \(d_{\lifting{\manifold{N}}}(\lifting{u}(x), \lifting{u}(y))\le \inj (\manifold{N})\) and thus, by Lemma~\ref{lemma_small_isometry},
  \begin{equation}
    \label{id1}
    d_{\lifting{\manifold{N}}}(\lifting{u}(x), \lifting{u}(y))\le {\textstyle \osc_{[x, y]}} u .
  \end{equation}
  Combining \eqref{ic1} with the conditional inequality \eqref{id1}, and noting that $\diam(\lifting{\manifold{N}})\ge \inj (\manifold{N})$, we find that
  \begin{equation}
    \label{id2}
    d_{\lifting{\manifold{N}}}(\lifting{u}(x), \lifting{u}(y))\le \frac{\diam(\lifting{\manifold{N}}) }{\inj(\manifold{N})}
    {\textstyle \osc_{[x, y]} u},\ \forall\, x, y\in \manifold{I}.
  \end{equation}
  We obtain \eqref{ha4} from Lemma~\ref{ia2} and \eqref{id2}.
\end{proof}

\begin{remark}
  %TODO: Optimal estimates on lifting when sp ≥ m
  The estimate  \eqref{ha4} \emph{has to depend}  on \(\diam (\lifting{\manifold{N}}) /\inj (\manifold{N})\). Indeed, consider the $d$-fold covering $\pi_d$ of $\Sset^1$,  with $d\ge 1$. In this case, we have  $\lifting{\manifold{N}}=d\, \Sset^1$, \(\pi_d (d \cos t, d \sin t) = (\cos (d t), \sin (d t))\), $\forall\, t\in\Rset$,  $\inj (\manifold{N})=\pi$, $\diam(\lifting{\manifold{N}})=\pi\dif$. 
  Let $\xi\in\Rset$. If we set \(u_{d, \xi} (x) := d (\cos (\xi  x), \sin (\xi  x))\in\lifting{\manifold{N}}\), $\forall\, x\in (0,1)$, then we have 
  \(\pi_d (u_{d, \xi}) = u_{1, d \xi}\).
  On the other hand, we have, with $0<C<\infty$ some absolute constant,
  \[
  \lim_{\abs{\xi} \to \infty} \frac{\seminorm{u_{d, \xi}}_{W^{s, p} ((0,1))}^p}{d^p |\xi|^{sp-1}}
  = C,\ \lim_{\abs{\xi} \to \infty} \frac{\seminorm{\pi_d\circ u_{d, \xi}}_{W^{s, p} ((0,1))}^p}{(d |\xi|)^{sp-1}}=C,
  \]
  and thus 
  \begin{equation}
    \label{ib5}
    \lim_{\abs{\xi} \to \infty} \frac{\seminorm{u_{d, \xi}}_{W^{s, p} ((0,1))}^p}{\seminorm{\pi_d\circ u_{d, \xi}}_{W^{s, p} ((0,1))}^p}=d^{p-sp+1}= \left(\frac{\diam(\lifting{\manifold{N}}) }{\inj(\manifold{N})}\right)^{p-sp+1}. 
  \end{equation}
  
  Note, however, that the estimates \eqref{ha4} and \eqref{ib5} do not yield the same power of $\diam(\lifting{\manifold{N}}) /\inj(\manifold{N})$. 
  The question about the optimal power in \eqref{ha4} is open.
\end{remark}

\subsection{The dimensional reduction argument }
\label{s5.3}
In this section and the next one, we explain how to derive $m$-dimensional estimates from the one-dimensional estimate provided by Lemma~\ref{ha3}. To start with, we consider the case of a cube, which is very simple. The case of a general domain requires slightly  more work and is presented in the next section.

\begin{lemma}
  \label{ie1}
Assume \eqref{ii1}--\eqref{ii2}.\\
Let $\manifold{C}:=a + (0,\ell)^m$, with \(\ell \in (0, \infty)\) and \(a \in \Rset^m\). 
Let $\manifold{Q}\subset \manifold{C}$ be an open set such that
  \begin{enumerate}
   \renewcommand{\labelenumi}{{\rm{(\roman{enumi})}}}
\renewcommand{\theenumi}{\roman{enumi}}
    \item \label{it_Aivie7_1}
    $\manifold{Q}$ is simply-connected,
    \item \label{it_Aivie7_2}
    for every $i=1,\ldots, m$ and for a.e.\ $\widehat x_i:=(x_1,\ldots, x_{i-1}, x_{i+1},\ldots, x_m)\in (0,\ell)^{m-1}$, we have $a+(x_1,\ldots, x_{i-1}, t, x_{i+1},\ldots, x_m)\in \manifold{Q}$, $\forall\, t\in (0,\ell)$.
  \end{enumerate}
Let $u:\manifold{C}\to \manifold{N}$ be such that $u\in C^0(\manifold{Q}, \manifold{N})$. 
\\  
Then every continuous lifting $\lifting{u}\in C^0(\manifold{Q}, \lifting{\manifold{N}})$ of $u$ satisfies
  \begin{equation}
    \label{if5}
    |\lifting{u}|_{W^{s,p}(\manifold{C})}^p\le C_{s,p,m}\, \left(\frac{\diam(\lifting{\manifold{N}}) }{\inj(\manifold{N})}\right)^p\, |u|_{W^{s,p}(\manifold{C})}^p,
  \end{equation}
  for some absolute constant $C_{s,p,m}$.
\end{lemma}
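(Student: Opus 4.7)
The plan is to reduce the $m$-dimensional estimate to the one-dimensional Lemma~\ref{ha3} via a Fubini-type argument along axis-aligned polygonal paths.

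By \eqref{it_Aivie7_1}, $\manifold{Q}$ is simply-connected, so the covering property furnishes a unique continuous lifting $\lifting{u}\in C^0(\manifold{Q},\lifting{\manifold{N}})$ of $u|_{\manifold{Q}}$; I would extend $\lifting{u}$ arbitrarily (say, by a constant) to $\manifold{C}\setminus\manifold{Q}$, a set of Lebesgue measure zero by Fubini and \eqref{it_Aivie7_2}, so that the extension does not alter the Gagliardo seminorm. For $(x,y)\in\manifold{C}^2$ consider the polygonal path $z_0:=x$, $z_k:=(y_1,\ldots,y_k,x_{k+1},\ldots,x_m)$, $z_m=y$; consecutive pairs $(z_{k-1},z_k)$ differ only in the $k$-th coordinate, and by \eqref{it_Aivie7_2} each segment $[z_{k-1},z_k]$ lies entirely in $\manifold{Q}$ for almost every $(x,y)$. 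Repeating the argument from the proof of Lemma~\ref{ha3} (Lemma~\ref{lemma_small_isometry} combined with the diameter bound) along each such axis-aligned segment yields $d_{\lifting{\manifold{N}}}(\lifting{u}(z_{k-1}),\lifting{u}(z_k))\le (\diam(\lifting{\manifold{N}})/\inj(\manifold{N}))\,{\textstyle\osc_{[z_{k-1},z_k]}}u$. Combining via the triangle inequality and raising to the $p$-th power gives, with $C=C_{m,p}$,
\begin{equation*}
d_{\lifting{\manifold{N}}}(\lifting{u}(x),\lifting{u}(y))^p \le C\left(\frac{\diam(\lifting{\manifold{N}})}{\inj(\manifold{N})}\right)^p \sum_{k=1}^m \bigl({\textstyle\osc_{[z_{k-1},z_k]}}u\bigr)^p.
\end{equation*}

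I would then integrate this pointwise bound against $|x-y|^{-(m+sp)}$ over $\manifold{C}\times\manifold{C}$. For each fixed $k$, the oscillation term depends only on the $m+1$ coordinates $(y_1,\ldots,y_{k-1},x_k,y_k,x_{k+1},\ldots,x_m)$; integrating the remaining $m-1$ variables out by means of the elementary identity $\int_{\Rset^{m-1}}(r^2+|\eta|^2)^{-(m+sp)/2}\dif\eta=C_{m,s,p}\,r^{-1-sp}$ reduces the $k$-th term to an integral of $({\textstyle\osc_{[t,s]}}u(\cdot,\xi))^p/|t-s|^{1+sp}$ over $(t,s,\xi)\in(0,\ell)^2\times(0,\ell)^{m-1}$, where $u(\cdot,\xi)$ denotes the restriction of $u$ to the $k$-th axis-aligned line with transverse coordinates $\xi$. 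Applying Lemma~\ref{ia2} on each such line dominates this by $C\int_{(0,\ell)^{m-1}}|u(\cdot,\xi)|^p_{W^{s,p}((0,\ell))}\dif\xi$.

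Summing over $k$, it remains to dominate the resulting sum of one-dimensional slice seminorms by the $m$-dimensional seminorm $|u|^p_{W^{s,p}(\manifold{C})}$; that is, to invoke the classical \emph{slicing upper bound}
\begin{equation*}
\sum_{i=1}^m \int_{(0,\ell)^{m-1}} |v(\cdot,\widehat{x}_i)|^p_{W^{s,p}((0,\ell))}\,\dif\widehat{x}_i \le C_{s,p,m}\,|v|^p_{W^{s,p}(\manifold{C})},
\end{equation*}
valid for any measurable $v$, $0<s<1$ and $1\le p<\infty$. This inequality is the main technical step and may be proved by a tube-averaging argument: for each $(t,s,\xi)$ in the slice domain one writes $|v(t,\xi)-v(s,\xi)|^p$ as a triangle-inequality average over pairs $(x,y)\in\manifold{C}^2$ with $x$ near $(t,\xi)$, $y$ near $(s,\xi)$ and $|x-y|\sim|t-s|$, and verifies after swapping the order of integration that the total contribution is controlled by the full Gagliardo seminorm. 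Granted it, the preceding Fubini calculation yields \eqref{if5}; the verification of the slicing bound with the correct constants is the only delicate point.
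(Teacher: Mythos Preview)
Your argument is correct and relies on the same ingredients as the paper's proof: the pointwise segment bound \eqref{id2}, the reverse oscillation inequality (Lemma~\ref{ia2}), and the comparison between slice seminorms and the full Gagliardo seminorm. The paper packages this more compactly by applying Lemma~\ref{ha3} directly on each coordinate slice and then invoking the two-sided equivalence \eqref{if4}; your polygonal-path route simply unfolds these steps (and thereby needs only the ``slices controlled by full seminorm'' direction of \eqref{if4} explicitly), but the content is the same.
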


The existence of the lifting $\lifting{u}$ follows from assumption \eqref{it_Aivie7_1} on $\manifold{Q}$.
By assumption \eqref{it_Aivie7_2} on $\manifold{Q}$, $\manifold{C}\setminus\manifold{Q}$ is a null set, and thus $\lifting{u}$ is defined a.e.\ on $\manifold{C}$.

\begin{proof}[Proof of Lemma~\ref{ie1}]
 With no loss of generality, we assume that $a=0$.  For $i=1,\ldots, m$ and $\widehat x_i\in (0,\ell)^{m-1}$, set 
  \begin{equation*}
    u_{\widehat x_i}(t):=u(x_1,\ldots, x_{i-1}, t, x_{i+1},\ldots, x_m),\ \forall\, t\in (0,\ell). 
  \end{equation*}
  By assumption \eqref{it_Aivie7_2}, $u_{\widehat x_i}$ is well-defined on $(0,\ell)$, for  $\widehat x_i$  in the complement of a null subset of $(0,\ell)^{m-1}$,  and for such $\widehat x_i$ we  define similarly $\lifting{u}_{\widehat x_i}(t)$.  By Lemma~\ref{ha3}, we have
  \begin{equation}
    \label{if3}
    \sum_{i=1}^m \int_{(0,\ell)^{m-1}} |\lifting{u}_{\widehat x_i}|_{W^{s,p}((0,\ell))}^p\dif\widehat{x}_i\le C_{s,p}\left(\frac{\diam(\lifting{\manifold{N}}) }{\inj(\manifold{N})}\right)^p\sum_{i=1}^m \int_{(0,\ell)^{m-1}} |u_{\widehat x_i}|_{W^{s,p}((0,\ell))}^p \dif\widehat{x}_i. 
  \end{equation} 
  We conclude by combining \eqref{if3} with the $\ell$-independent semi-norm equivalence 
  \begin{equation}
    \label{if4}
    \sum_{i=1}^m \int_{(0,\ell)^{m-1}} |f_{\widehat x_i}|_{W^{s,p}((0,\ell))}^p\dif\widehat{x}_i\sim 
    |f|_{W^{s,p}(\manifold{C})}^p,\ \forall\,  f:\manifold{C}\to \manifold{N}\end{equation}
  (and the similar equivalence for $\lifting{\manifold{N}}$-valued maps). For $\Rset$-valued maps defined on $\Rset^m$, this equivalence is well-known, see e.g.\ \cite[Lemma 7.44]{adams}. The argument for manifold-valued maps defined on  a cube is exactly the same as the one in \cite[proof of Lemma 7.44]{adams}. The fact that the constant $C_{s,p,m}$ does not depend on $\ell$ follows by scaling.
\end{proof}

\subsection{From local to global estimates}
\label{s5.4}

Here, we explain how to pass from \emph{local estimates} (on cubes) to \emph{global estimates} (on general domains).  The basic ingredient is the semi-norm control provided by the next result. \begin{lemma}
  \label{ik1}
  Let $0<s<1$ and $1\le p<\infty$.
  \\
  Let $\manifold{N}$ be a compact Riemannian manifold.
  \\
  Let $\manifold{M}$  be a connected compact manifold, possibly with boundary. 
  \\
  Let $(\manifold{C}_j)_{j\in J}$ be a finite family of open subsets of $\manifold{M}$, covering $\manifold{M}$.
  \\
  Then
  \begin{equation}
    \label{ik2}
    |u|_{W^{s,p}(\manifold{M})}^p\le C_{s, p, \manifold{M}}\sum_{j\in J}|u|_{W^{s,p}(\manifold{C}_j)}^p,\ \forall\,  u:\manifold{M}\to \manifold{N}.
  \end{equation} 
\end{lemma}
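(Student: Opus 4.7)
The plan is to decompose the double integral defining $|u|_{W^{s,p}(\manifold{M})}^p$ into a near-diagonal part (pairs with $d_{\manifold{M}}(x,y)<\delta$) and a far-diagonal part (pairs with $d_{\manifold{M}}(x,y)\ge\delta$), where $\delta>0$ is a Lebesgue number of the cover; such a $\delta$ exists by compactness of $\manifold{M}$ and has the property that every geodesic ball of radius $\delta$ is contained in some $\manifold{C}_j$. The near part will be handled trivially; the main obstacle is the far part, where there is no a priori reason for a pair $(x,y)$ to lie in a common member of the cover.

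Every pair $(x,y)$ with $d_{\manifold{M}}(x,y)<\delta$ sits in some common $\manifold{C}_j$, so overcounting gives
\[
\iint_{d_{\manifold{M}}(x,y)<\delta}\frac{d_{\manifold{N}}(u(x),u(y))^p}{d_{\manifold{M}}(x,y)^{m+sp}}\dif x\dif y\le \sum_{j\in J}|u|_{W^{s,p}(\manifold{C}_j)}^p.
\]
For the complementary part, the lower bound $d_{\manifold{M}}(x,y)\ge\delta$ reduces the problem to controlling $\iint_{\manifold{M}\times\manifold{M}}d_{\manifold{N}}(u(x),u(y))^p\dif x\dif y$ by $\sum_j|u|_{W^{s,p}(\manifold{C}_j)}^p$.

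To handle this last estimate, split $\manifold{M}\times\manifold{M}\subset\bigcup_{j,k\in J}\manifold{C}_j\times\manifold{C}_k$. For $j=k$, the bound $d_{\manifold{M}}(x,y)\le\diam(\manifold{M})$ gives immediately $\iint_{\manifold{C}_j\times\manifold{C}_j}d_{\manifold{N}}(u(x),u(y))^p\dif x\dif y\lesssim|u|_{W^{s,p}(\manifold{C}_j)}^p$. For $j\ne k$, I exploit the connectedness of $\manifold{M}$: since the intersection graph of the $\manifold{C}_j$'s is connected, there is a chain $\manifold{C}_{j_0}=\manifold{C}_j,\manifold{C}_{j_1},\dotsc,\manifold{C}_{j_L}=\manifold{C}_k$ in which each consecutive intersection $\manifold{C}_{j_{i-1}}\cap\manifold{C}_{j_i}$ is open and non-empty, hence of positive Hausdorff measure. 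Inserting intermediate points $z_i\in\manifold{C}_{j_{i-1}}\cap\manifold{C}_{j_i}$ via the triangle inequality for $d_{\manifold{N}}$ combined with the elementary bound $(a_0+\dotsb+a_L)^p\le(L+1)^{p-1}(a_0^p+\dotsb+a_L^p)$, then averaging each $z_i$ uniformly over the overlap $\manifold{C}_{j_{i-1}}\cap\manifold{C}_{j_i}$ before integrating $(x,y)$ over $\manifold{C}_j\times\manifold{C}_k$, every resulting term becomes a double integral of $d_{\manifold{N}}(u(\cdot),u(\cdot))^p$ over some $\manifold{C}_{j_i}\times\manifold{C}_{j_i}$, which is again controlled by $|u|_{W^{s,p}(\manifold{C}_{j_i})}^p$ by the diagonal case.

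Summing over all pairs $(j,k)$ and combining with the near-diagonal bound yields the lemma. The resulting constant depends on $s$, $p$, $\diam(\manifold{M})$, the Lebesgue number $\delta$, the chain lengths, and the measures of the chosen overlaps, but not on $u$. The delicate point is the chain-averaging bookkeeping in the off-diagonal case; once a chain is fixed for each ordered pair $(j,k)$, the remainder of the argument is Fubini and the trivial inequality $d_{\manifold{M}}\le\diam(\manifold{M})$.
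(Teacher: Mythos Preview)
Your proof is correct and follows essentially the same route as the paper: the same Lebesgue-number split into near- and far-diagonal parts, the same reduction of the far part to the Poincar\'e-type estimate $\iint_{\manifold{M}\times\manifold{M}} d_{\manifold{N}}(u(x),u(y))^p\lesssim \sum_j |u|_{W^{s,p}(\manifold{C}_j)}^p$, and the same chaining-through-overlaps with averaging over intersections to prove the latter. The only cosmetic difference is organizational---the paper packages the chaining as a single induction over a relabelling $\manifold{C}_1,\dotsc,\manifold{C}_k$ with $\manifold{C}_{i+1}\cap\bigcup_{j\le i}\manifold{C}_j\ne\emptyset$, whereas you chain each ordered pair $(j,k)$ separately.
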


\begin{proof} Let $m$ be the dimension of $\manifold{M}$. 
  Let $\delta>0$ be such that 
  \begin{equation*}
    [x, y\in \manifold{M},\, d_{\manifold{M}}(x,y)<\delta]\implies [x, y\in {\manifold C}_j\text{ for some }j\in J].
  \end{equation*}
  The existence of $\delta$ implies that 
  \begin{equation}
    \label{im1}
    \begin{split}
      |u|_{W^{s,p}(\manifold{M})}^p\le &\sum_{j\in J}|u|_{W^{s,p}(\manifold{C}_j)}^p +\iint_{x, y\in\manifold{M},\, d_{\manifold{M}}(x, y) \ge \delta}\frac{d_\manifold{N} (u(x), u(y))^p}{d_{\manifold{M}}(x,y)^{m+sp}}\dif x \dif y\\
      \lesssim & \sum_{j\in J}|u|_{W^{s,p}(\manifold{C}_j)}^p +\iint_{x, y\in\manifold{M}} d_{\manifold{N}} (u(x),u(y))^p \dif x \dif y,
    \end{split}
  \end{equation}
  and thus \eqref{ik2} amounts to proving, the Poincar\'e type estimate
  \begin{equation}
    \label{im2}
    \iint_{x, y\in\manifold{M}} d_{\manifold{N}} (u(x),u(y))^p \dif x \dif y\lesssim \sum_{j\in J}|u|_{W^{s,p}(\manifold{C}_j)}^p.
  \end{equation}
 
 We may assume that every 
 $\mathcal{C}_j$ is non-empty.  Since \(\mathcal{M}\) is connected, we can relabel the sets \((\mathcal{C}_j)_{j \in I}\) as \((\mathcal{C}_j)_{1 \le j \le k}\)
  in such a way that \(\manifold{C}_{i + 1} \cap \bigcup_{j = 1}^i \manifold{C}_j  \ne \emptyset,\ \forall\, 1\le i\le k-1\).
  We then have, by the triangle inequality,
  for every \(x \in \bigcup_{j = 1}^{i} \manifold{C}_j\) and \(y \in \manifold{C}_{i + 1}\),
  \[
  d_{\manifold{M}} (u (x), u (y))^p
  \lesssim \fint_{\manifold{C}_{i + 1} \cap \bigcup_{j = 1}^{i} \manifold{C}_j} [d (u (x), u (z))^p + d (u (z), u (y))^p] \dif z,
  \]
and hence, by induction, we obtain
  \[
  \begin{split}
    \iint_{x, y \in \bigcup_{j = 1}^{i + 1} \manifold{C}_j} \hspace{-1.5em} d (u (x), u (y))^p  \dif x \dif y
    &\lesssim \int_{x, y \in \bigcup_{j = 1}^{i} \manifold{C}_j} \hspace{-1.5em}  d (u (x), u (y))^p\dif x \dif y
    + \int_{x, y \in \manifold{C}_{i+1}} \hspace{-1em} d (u (x), u (y))^p\dif x \dif y\\
& \lesssim \sum_{j = 1}^{i + 1} |u|_{W^{s,p}(\manifold{C}_j)}^p.\hskip 70mm\qedhere
\end{split}
  \]
\end{proof}

Combining Lemma~\ref{ie1} with  Lemma~\ref{ik1}, we obtain the following

\begin{corollary}
  \label{ij1}
  Assume \eqref{ii1}--\eqref{ii2}.\\
  Let $\manifold{M}\subset\Rset^m$ be a  smooth bounded open set. Let $\manifold{M}'\subset\Rset^m$ be an    open set such that $\overline{\manifold{M}}\subset \manifold{M}'$. \\  
  Let $\manifold{R}\subset \manifold{M}'$ and $u:\manifold{M}'\to \manifold{N}$ be  such that
  \begin{enumerate}
   \renewcommand{\labelenumi}{{\rm{(\roman{enumi})}}}
\renewcommand{\theenumi}{\roman{enumi}}
    \item \label{it_Aihoe2_1}
    for every cube $\manifold{C}\subset 
    \manifold{M}'$, the set $\manifold{Q}:=\manifold{R}\cap \manifold{C}$
    satisfies assumption \eqref{it_Aivie7_2} in Lemma~\ref{ie1},
    \item \label{it_Aihoe2_2} 
    $u\in C^0(\manifold{R}, \manifold{N})$ and $u$ has a lifting $\lifting{u}\in C^0(\manifold{R}, \lifting{\manifold{N}})$.
  \end{enumerate}
Then
  \begin{equation}
    \label{if9}
    |\lifting{u}|_{W^{s,p}(\manifold{M})}^p\le C_{s,p,\manifold{M}}\, \left(\frac{\diam(\lifting{\manifold{N}}) }{\inj(\manifold{N})}\right)^p\, |u|_{W^{s,p}(\manifold{M}')}^p,
  \end{equation}
  for some absolute constant $C_{s,p,\manifold{M}}$.
\end{corollary}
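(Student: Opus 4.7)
The plan is to combine the cube estimate of Lemma~\ref{ie1} with the Poincaré-type global control of Lemma~\ref{ik1} via a finite open cover of $\overline{\manifold{M}}$ by cubes contained in $\manifold{M}'$. Since $\overline{\manifold{M}}$ is compact and $\manifold{M}'$ is an open neighbourhood of it, I can fix once and for all a finite family of open cubes $(\manifold{C}_j)_{1\le j\le N}$, each of the form $a_j+(0,\ell_j)^m$, with $\overline{\manifold{C}_j}\subset \manifold{M}'$ for every $j$ and $\overline{\manifold{M}}\subset \bigcup_{j=1}^N \manifold{C}_j$. Both $N$ and the cubes depend only on $\manifold{M}$ (and the chosen $\manifold{M}'$).

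Next I apply the cube estimate on each $\manifold{C}_j$ to the pair $u,\lifting{u}$ restricted to $\manifold{Q}_j:=\manifold{R}\cap \manifold{C}_j$. By assumption \eqref{it_Aihoe2_1}, $\manifold{Q}_j$ verifies the line-segment condition \eqref{it_Aivie7_2} of Lemma~\ref{ie1}, and by assumption \eqref{it_Aihoe2_2}, the restriction of $\lifting{u}$ to $\manifold{Q}_j$ is a continuous lifting of $u\vert_{\manifold{Q}_j}$. The simply-connectedness hypothesis \eqref{it_Aivie7_1} in Lemma~\ref{ie1} is only used to produce a continuous lifting on $\manifold{Q}$; since such a lifting is furnished here globally by hypothesis, the proof of Lemma~\ref{ie1} goes through unchanged and yields
\begin{equation*}
|\lifting{u}|_{W^{s,p}(\manifold{C}_j)}^p\le C_{s,p,m}\left(\frac{\diam(\lifting{\manifold{N}})}{\inj(\manifold{N})}\right)^p |u|_{W^{s,p}(\manifold{C}_j)}^p,\quad 1\le j\le N.
\end{equation*}

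Finally, since $\lifting{\manifold{N}}$ is compact and $(\manifold{C}_j\cap \manifold{M})_{1\le j\le N}$ is a finite open cover of the connected compact manifold $\manifold{M}$, Lemma~\ref{ik1} applied to the map $\lifting{u}:\manifold{M}\to \lifting{\manifold{N}}$ gives
\begin{equation*}
|\lifting{u}|_{W^{s,p}(\manifold{M})}^p\le C_{s,p,\manifold{M}} \sum_{j=1}^N |\lifting{u}|_{W^{s,p}(\manifold{C}_j\cap \manifold{M})}^p \le C_{s,p,\manifold{M}} \sum_{j=1}^N |\lifting{u}|_{W^{s,p}(\manifold{C}_j)}^p.
\end{equation*}
Combining the two displays and using $\manifold{C}_j\subset \manifold{M}'$ to bound $|u|_{W^{s,p}(\manifold{C}_j)}^p\le |u|_{W^{s,p}(\manifold{M}')}^p$ for every $j$, I obtain \eqref{if9}, with an absolute constant depending only on $s$, $p$, and $\manifold{M}$ (through $N$ and the constants of Lemmas~\ref{ie1} and \ref{ik1}).

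The only delicate point is the observation that Lemma~\ref{ie1} remains applicable without the simply-connectedness hypothesis on $\manifold{Q}_j$, which is valid because the continuous lifting on $\manifold{Q}_j$ is supplied by hypothesis~\eqref{it_Aihoe2_2}; the remainder is a routine localisation-and-sum argument.
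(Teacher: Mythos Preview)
Your proof is correct and is precisely the argument the paper intends: the corollary is stated immediately after the sentence ``Combining Lemma~\ref{ie1} with Lemma~\ref{ik1}, we obtain the following'', with no further details given. Your observation that hypothesis~\eqref{it_Aivie7_1} of Lemma~\ref{ie1} serves only to guarantee the \emph{existence} of a continuous lifting---and is therefore superfluous once $\lifting{u}$ is supplied by assumption~\eqref{it_Aihoe2_2}---is exactly the point needed to make the combination work.
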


\subsection{Proof of Theorem~\ref{b4}}
\label{s5.5}
Since, clearly, $\eqref{it_Ue5ef_a} \implies \eqref{it_Ue5ef_b}$, it suffices to prove that $\eqref{it_Ue5ef_b} \implies\eqref{it_Ue5ef_c}$ (always)  and $\eqref{it_Ue5ef_c} \implies \eqref{it_Ue5ef_a}$ (in the case of the universal covering, with $\manifold{M}$ a ball).

\begin{proof}[Proof of $\eqref{it_Ue5ef_b} \implies \eqref{it_Ue5ef_c}$] We work on a compact manifold $\manifold{M}$. 
  In order to obtain \eqref{it_Ue5ef_c}, it suffices to obtain the following \emph{a priori estimate}. If $u\in C^\infty ({\manifold{M}}, \manifold{N})$, then $u$ has a lifting $\lifting{u}\in C^\infty ({\manifold{M}}, \lifting{\manifold{N}})$ such that
  \begin{equation}
    \label{in1}
    |\lifting{u}|_{W^{s,p}(\manifold{M})}^p \lesssim |u|_{W^{s,p}(\manifold{M})}^p.
  \end{equation}
  
  Indeed, assuming that \eqref{in1} holds for smooth maps, a straightforward limiting procedure shows that \eqref{in1} still holds for weak limits of smooth maps.
  
  In order to prove \eqref{in1}, we consider a finite covering of $\manifold{M}$ with open sets ${\manifold{C}}_j$, each one bi-Lipschitz homeomorphic to a cube in $\Rset^m$. On each ${\manifold{C}}_j$, we have 
  \begin{equation}
  \label{ra1}
  |\lifting{u}|_{W^{s,p}({\manifold{C}}_j)}^p\lesssim |{u}|_{W^{s,p}({\manifold{C}}_j)}^p;
  \end{equation}
  this follows (after composition with a suitable homeomorphism) 
   from Lemma \ref{ie1}. 
   
   We conclude using \eqref{ra1} and Lemma \ref{ik1} (applied to $\lifting{u}$). 
%   first extend $u$ across $\partial\manifold{M}$, to a (sufficiently small) smooth simply-connected neighborhood $\manifold{M}'$ of $\overline{\manifold{M}}$. The extension, still denoted $u$, is $\manifold{N}$-valued and satisfies,
%  since \(0 < s <1\) 
%  \begin{equation}
%    \label{in5}
%    |u|_{W^{s,p}(\manifold{M}')}^p\le C_{s, p, \manifold{M}}\,  |u|_{W^{s,p}(\manifold{M})}^p.
%  \end{equation}
%  Since $u$ is obtained by reflection, \(u\) is a Lipschitz map. The assumptions of Corollary~\ref{ij1} are thus satisfied with $\manifold{R}=\manifold{M}'$. 
%  We conclude by combining Corollary~\ref{ij1} with \eqref{in5}.
  \end{proof}

\begin{proof}[Proof of $\eqref{it_Ue5ef_c}\implies \eqref{it_Ue5ef_a}$] We work on an  open ball.  Write $u=\pi\compose\lifting{u}$, with $\lifting{u}\in W^{s,p}(\manifold{M}, \lifting{\manifold{N}})$. Since $1<sp<2$ and  $\lifting{\manifold{N}}$ is compact and simply-connected (by definition of the universal covering),  $C^\infty (\overline{\manifold{M}}, \lifting{\manifold{N}})$ is dense in $W^{s,p}(\manifold{M}, \lifting{\manifold{N}})$ \cite[Theorem 4]{Brezis_Mironescu_2015} 
  (see also \citelist{\cite{Bousquet_Ponce_Van_Schaftingen_2014}*{Theorem 1.3}\cite{Mucci_2009}*{Theorem 2}}). Consider a sequence $(\lifting{u}_n)_{n\ge 0}\) in \(C^\infty(\overline{\manifold{M}}, \lifting{\manifold{N}})$ such that $\lifting{u}_n\to \lifting{u}$ in $W^{s,p}(\manifold{M})$ as $n\to\infty$. Set $u_n:=\pi\compose  \lifting{u}_n \in C^\infty(\overline{\manifold{M}}, \manifold{N})$. Using the fact that $\pi$ is Lipschitz-continuous, we find that $u_n\to u$ in $W^{s,p}(\manifold{M})$ as $n\to\infty$.
\end{proof}

\begin{remark}
  \label{io1}
  We have proved the following quantitative version of \eqref{it_Ue5ef_c}. If $u\in W^{s,p}(\manifold{M}, \manifold{N})$ has a lifting $\lifting{u}\in W^{s,p}(\manifold{M}, \lifting{\manifold{N}})$, then
  \begin{equation*}
    |\lifting{u}|_{W^{s,p}(\manifold{M})}^p\le C_{s, p, \manifold{M}} \left(\frac{\diam(\lifting{\manifold{N}}) }{\inj(\manifold{N})}\right)^p\, |u|_{W^{s,p}(\manifold{M})}^p.
  \end{equation*}
\end{remark}

\subsection{Proof of Theorem~\ref{theorem_intro_lifting}}
\label{s5.6}
In view of the partial results of Bethuel and Chiron \cite{Bethuel_Chiron_2007}, it suffices to consider the case where $0<s<1$, $2\le sp<m=\dim\manifold{M}$. 

\medskip
\noindent
\emph{Proof of Theorem~\ref{theorem_intro_lifting} when  \(\manifold{M}\) is a smooth bounded domain of \(\Rset^m\)}.
As in the previous section, it suffices to prove the \emph{a priori estimate} 
\begin{equation}
  \label{ip1}
  |\lifting{u}|_{W^{s,p}(\manifold{M})}^p\le C_{s, p, \manifold{M}}\, |u|_{W^{s,p}(\manifold{M})}^p,
\end{equation}
for a lifting $\lifting{u}$ of $u$, where $u$ belongs to a dense subset of $W^{s,p}(\manifold{M}, \manifold{N})$. Weak density would suffice, but it turns out that we have at our disposal a convenient strongly dense class. Such a class is obtained as follows \cite{Brezis_Mironescu_2015}*{Theorem 6}. Extend first every $u\in W^{s,p}(\manifold{M}, \manifold{N})$  by reflection across $\partial \manifold{M}$ to a larger set $\manifold{M}'$. 
The extension, still denoted $u$, satisfies $u:\manifold{M}'\to\manifold{N}$ and
\begin{equation}
  \label{ip2}
  |u|_{W^{s,p}(\manifold{M}')}^p\le C_{s,p,\manifold{M}}\, |u|_{W^{s,p}(\manifold{M})}^p.
\end{equation}
Since \(\mathcal{M}\) is smooth, bounded and simply-connected, we can assume without loss of generality that \(\manifold{M}'\) is also  smooth, bounded and simply-connected.

Let $j:=\lfloor sp\rfloor$ denote the integer part of $sp$, so that $2\le j<m$.  Consider the $\varepsilon$-grids ${\manifold T}_{a, \varepsilon}$,  $\forall\, \varepsilon>0$,   $\forall\, a\in\Rset^m$, defined by the cubes $\manifold{C}_{a,\varepsilon, k}:=a+\varepsilon k+[0,\varepsilon]^m$, $k\in\Zset^m$. Let ${\manifold T}_{a, \varepsilon}^j$ denote the $j$th skeleton of ${\manifold T}_{a, \varepsilon}$ and ${\manifold U}_{a, \varepsilon}^{m-j-1}$ denote the ($(m-j-1)$-dimensional) dual skeleton of ${\manifold T}_{a, \varepsilon}^j$.

We use the following approximation result \cite{Brezis_Mironescu_2015}*{Theorem 6}: given $u\in W^{s,p}(\manifold{M}', \manifold{N})$, there exist sequences $\varepsilon_n\searrow 0$, $(a_n)_{n\ge 0}\subset\Rset^m$, $(u_n)_{n\ge 0}\subset W^{s,p}(\manifold{M}', \manifold{N})$ such that
\begin{enumerate}
 \renewcommand{\labelenumi}{{\rm{(\alph{enumi})}}}
\renewcommand{\theenumi}{\alph{enumi}}
  \item
  \label{aabove}
  $u_n\to u$ as $n\to\infty$, strongly in $W^{s,p}(\manifold{M}')$.
  \item
  $u_n$ is continuous in $\overline{\manifold{M}'}\setminus {\manifold U}_{a_n, \varepsilon_n}^{m-j-1}$, $\forall\, n\ge 0$.
\end{enumerate}

In view of item \eqref{aabove} above and of Corollary~\ref{ij1}, in order to obtain \eqref{ip1} (and thus to complete the proof of Theorem~\ref{theorem_intro_lifting}) it suffices to prove that $u_n$ and the set $\manifold{R}=\manifold{R}_n:=\manifold{M}'\setminus {\manifold U}_{a_n, \varepsilon_n}^{m-j-1}$ satisfy the assumptions \eqref{it_Aihoe2_1} and \eqref{it_Aihoe2_2} in Corollary~\ref{ij1}. 

Clearly, assumption \eqref{it_Aihoe2_1} is satisfied, since ${\manifold U}_{a_n, \varepsilon_n}^{m-j-1}$ is a finite union of $(m-j-1)$-dimensional affine subspaces and since $j\ge 1$.
Moreover, by a straightforward induction argument relying on the next lemma (which is a particular case of general position arguments), the set \(\mathcal{R}_n\) is simply-connected, and thus \({u_n}_{|{\mathcal{R}_n}}\) has a lifting \(\lifting{u}_n \in C^0 (\manifold{R}_n, \lifting{\manifold{N}})\). \qed

\begin{lemma}
\label{ra2} Let $m\ge 3$. 
  Let \(\manifold{V} \subset \Rset^m\) be open and let \(\Sigma\) be an affine subspace of dimension  \(n\le m - 3\).
  If \(\manifold{V}\) is simply-connected, then \(\manifold{V} \setminus \Sigma\) is simply-connected.
\end{lemma}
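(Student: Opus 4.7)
My plan is a general-position / transversality argument that exploits the codimension hypothesis $\dim\Sigma\le m-3$. The underlying observation is that a smooth map $F\colon D^2\to\Rset^m$ which is transverse to the affine subspace $\Sigma$ cannot meet $\Sigma$ at all: transversality at an intersection point would require the differential of $F$ to surject onto the $(m-n)$-dimensional normal space of $\Sigma$, which is impossible once $m-n\ge 3>2=\dim D^2$. The analogous observation holds for paths in place of disks under the weaker codimension bound $m-n\ge 2$.

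First I would establish path-connectedness of $\manifold{V}\setminus\Sigma$. For $p,q\in\manifold{V}\setminus\Sigma$, simple connectedness of $\manifold{V}$ yields a continuous path $\gamma\colon [0,1]\to\manifold{V}$ from $p$ to $q$. Using that $\manifold{V}$ is open, so that any sufficiently $C^0$-small perturbation stays in $\manifold{V}$, I would first smooth $\gamma$ rel endpoints and then apply a relative transversality theorem to perturb it, rel endpoints, to a smooth path transverse to $\Sigma$ in $\Rset^m$. Since $1+n-m\le -2$, transversality forces the perturbed path to be disjoint from $\Sigma$, giving the desired path in $\manifold{V}\setminus\Sigma$.

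For the $\pi_1$ statement, let $\gamma\colon S^1\to\manifold{V}\setminus\Sigma$ be a continuous loop. Simple connectedness of $\manifold{V}$ extends $\gamma$ to a continuous map $F\colon D^2\to\manifold{V}$. I would approximate $F$ uniformly by a map $\widetilde F\colon D^2\to\manifold{V}$ that is smooth on the interior and coincides with $F$ on a collar of $\partial D^2$, so that the boundary loop is unchanged and continues to miss $\Sigma$. The relative transversality theorem then allows me to perturb $\widetilde F$, rel $\partial D^2$, to a smooth map $\widehat F\colon D^2\to\manifold{V}$ transverse to $\Sigma$. By the opening observation, $\widehat F^{-1}(\Sigma)=\emptyset$, so $\widehat F$ is a null-homotopy of $\gamma$ inside $\manifold{V}\setminus\Sigma$.

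The main technical point is to arrange each perturbation to be small enough to land inside the open set $\manifold{V}$ and, in the second step, to be performed rel boundary. Both are routine: openness of $\manifold{V}$ together with compactness of the source gives room for $C^0$-small perturbations, while relative versions of Whitney approximation and the Thom transversality theorem give the needed control rel boundary. Once these are in place, the codimension hypothesis $n\le m-3$ delivers the conclusion through the dimension count above.
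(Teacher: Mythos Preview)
Your argument is correct and is in the same spirit as the paper's, both being general-position arguments exploiting the codimension bound. The execution differs in a way worth noting. The paper avoids the Thom transversality machinery entirely: given a \(C^1\) extension \(\sigma:\overline{\Bset^2}\to\manifold{V}\) of the loop, it simply \emph{translates} the whole map by a small vector \(\xi\in\Sigma^\perp\). Since \(\dim\Sigma^\perp\ge 3\), the projection \(P(\sigma(\overline{\Bset^2}))\subset\Sigma^\perp\) is Lebesgue-null, so almost every small \(\xi\) pushes the image off \(\Sigma\); the translated loop \(\gamma+\xi\) is then null-homotopic in \(\manifold{V}\setminus\Sigma\), and a straight-line homotopy (using the \(\delta\)-tube around \(\gamma(\Sset^1)\)) connects \(\gamma\) to \(\gamma+\xi\) inside \(\manifold{V}\setminus\Sigma\). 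This bypasses both the rel-boundary smoothing and the relative transversality theorem you invoke, at the cost of not fixing the boundary during the perturbation. Your approach is the textbook differential-topology route and generalizes more readily (e.g.\ to submanifolds \(\Sigma\) that are not affine); the paper's is more elementary and self-contained, needing only the fact that a \(C^1\) image of a disk has measure zero in a space of higher dimension.
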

\begin{proof}[Proof of Lemma \ref{ra2}]
  Without loss of generality, we assume that \(0 \in \Sigma\). Let \(\gamma \in C^1 (\Sset^1, \manifold{V} \setminus \Sigma)\). Our aim is to prove that $\gamma$ is null homotopic in $\manifold{V} \setminus \Sigma$.
  
Since the set \(\manifold{V}\) is simply-connected, there exists \(\sigma \in C^1 (\overline{\Bset^2}, \manifold{V})\) such that 
\(\sigma_ {\vert {\Sset^1}} = \gamma\). 
Since the set \(\manifold{V}\) is open, there exists \(\delta > 0\), such that, for every \(x \in \Sset^1\), \(B_{\delta} (\gamma (x)) \subset \manifold{V} \setminus \Sigma\) and, for every \(x \in \overline{\Bset^2}\), \(B_{\delta} (\sigma (x)) \subset \manifold{V}\).\footnote{$B_\varepsilon (x)$ is the Euclidean ball of centre $x$ and radius $\varepsilon$, with $x\in\Rset^m$. When $x=0$, we write $B_\varepsilon$ instead of $B_\varepsilon (0)$.} 
Let \(P : \Rset^m \to \Sigma^\perp\) be the orthogonal projection on \(\Sigma^\perp\). Since  \(\dim \Sigma^\perp \ge  3\), \(P (\sigma (\overline{\Bset^2}))\) is a negligible subset of \(\Sigma^\perp\). Hence, for almost every 
$\xi \in B_{\delta}\cap \Sigma^\perp$, we have $-\xi\not\in P (\sigma (\overline{\Bset^2}))$. For any such $\xi$, we have  \((\sigma + \xi) (\overline{\Bset^2})\subset \manifold{V}\setminus \Sigma\), and thus $\gamma+\xi:\Sset^1\to \manifold{V}\setminus \Sigma$ is null homotopic in $\manifold{V} \setminus \Sigma$. We conclude by noting that, by the choice of \(\delta\), the maps \(\gamma:{\Sset^1}\to \manifold{V}\setminus \Sigma\) and \(\gamma+\xi:{\Sset^1}\to \manifold{V}\setminus \Sigma\) are homotopic in $\manifold{V}\setminus \Sigma$.

By a similar argument, if $\manifold{V}$ is connected, then $\manifold{V}\setminus \Sigma$ is connected.
\end{proof}

\begin{proof}[Proof of Theorem~\ref{theorem_intro_lifting} when  \(\manifold{M}\) is a compact manifold without boundary] We  embed  \(\manifold{M}\) isometrically into some Euclidean  space \(\Rset^\mu\). Then there exists $\delta>0$ such that:
\begin{enumerate}
 \renewcommand{\labelenumi}{{\rm{(\alph{enumi})}}}
\renewcommand{\theenumi}{\alph{enumi}}
  \item \label{it_Om0oo_1}
the 
nearest point projection \(\Pi : \manifold{O} \to \manifold{M}\) is well-defined and smooth on the set $\manifold{O}:=\{ x\in\Rset^\mu;\, \dist (x, 
\manifold{M})<\delta\}$;
\item \label{it_Om0oo_2}
$\manifold{O}$ is smooth;
\item \label{it_Om0oo_3}
for every $x\in\manifold{M}$, $\Pi^{-1}(\{ x\})$ is diffeomorphic to $\Bset^{\mu-m}$;
\item \label{it_Om0oo_4}
if $u:\manifold{M}\to\manifold{N}$ and we set $U:=u\circ \Pi:\manifold{O}\to\manifold{N}$, then 
\begin{equation}
\label{ra3}
C'|u|_{W^{s,p}(\manifold{M})}^p\le |U|_{W^{s,p}(\manifold{O})}^p\le C|u|_{W^{s,p}(\manifold{M})}^p
\end{equation} 
for some $C', C\in (0,\infty)$ depending on $0<s<1$, $1\le p<\infty$, the embedding, $\delta$, but independent of $u$.
\end{enumerate}
Let $u\in W^{s,p}(\manifold{M}, \manifold{N})$, and let $\manifold{O}$, $U$ as above. Then $\manifold{O}$ is simply-connected, since $\Pi:\manifold{O}\to \manifold{M}$ is a retraction  and $\manifold{M}$ is simply-connected.

By the first part of the proof of the theorem, there exists a map \(\lifting{U} \in W^{s, p} (\manifold{O}, \lifting{\manifold{N}})\) such that \(\pi \compose \lifting{U} = U\) in \(\manifold{O}\). 
Moreover, for a.e.\  \(x \in \manifold{M}\), \(U _{|\Pi^{-1} (\{x\})}\) is constant on \(\Pi^{-1} (\{x\})\)  (that we identify with a ball, see \eqref{it_Om0oo_3} above) and \(\lifting{U}_{|{\Pi^{-1} (\{x\})}} \in W^{s, p} (\pi^{-1} (\{x\}),\manifold{N})\).  Set $b:=u(x)$ and let $\pi^{-1}(b)=\{ \lifting{b}_i;\, i\in I\}$, so that $U(y)\in \{ \lifting{b}_i;\, i\in I\}$, for a.e.\ $y\in \Pi^{-1}(\{ x\})$. Consider some $i\in I$ such that the set $\{ y\in\Pi^{-1}(\{ x\});\, U(y)=\lifting{b}_i\}$ is non-negligible (such an $i$ does exist, since $I$ is at most countable). Since $\pi\circ U=\pi\circ \lifting{b}_i$ on $\Pi^{-1}(\{x\})$,  Proposition \ref{de1} below implies that $U=\lifting{b}_i$ a.e.\ on $\Pi^{-1}(\{x\})$. For any $x$ as above, set
$\lifting{u}(x):=\lifting{b}_i$, so that $\lifting{u}$ is defined a.e.\ on $\manifold{M}$ and $\lifting{U}\circ\Pi=\lifting{u}$. By \eqref{ra3}, we have $\lifting{u}\in W^{s,p}(\manifold{M}, \lifting{\manifold{N}})$ and, clearly, $\pi\circ \lifting{u}=u$.
\end{proof}

\begin{proof}[Proof of Theorem~\ref{theorem_intro_lifting} when  \(\manifold{M}\) is a compact manifold with boundary] This is a slightly more subtle case. 
We consider two larger smooth  compact   manifolds  with boundary, \(\manifold{M}'\) and $\manifold{M}''$,  such that \(\manifold{M}\subset \inter\, ({\manifold{M}'})\),  \(\manifold{M}'\subset \inter\, ({\manifold{M}''})\) (where $\inter$ stands for the interior),   and we can extend maps from $\manifold{M}$ to $\manifold{M}'$ by reflection across the boundary such that \eqref{ip2} holds.

We next embed  \(\manifold{M}''\) isometrically into some  \(\Rset^\mu\). Let $\Pi$ denote the nearest point projection on $\manifold{M}''$. Then, for small $\delta>0$, if we set $\manifold{O}:=\{ x\in\Rset^\mu;\, \dist (x, \manifold{M})<\delta\text{ and }\Pi(x)\in\manifold{M}\}$, then $\manifold{O}$ satisfies \eqref{it_Om0oo_1}, \eqref{it_Om0oo_3} and \eqref{it_Om0oo_4}, above, but not \eqref{it_Om0oo_2}. Thus we cannot directly apply directly \cite{Brezis_Mironescu_2015}*{Theorem 6} to the map $U$ in $\manifold{O}$ as above. However, we note that in order to invoke this result, we do not need a smooth domain. It suffice to know that there exists an open set $\manifold{O}'$ such that $\overline{\manifold{O}}\subset\manifold{O}'$ and an extension $V\in W^{s,p}(\manifold{O}', \manifold{N})$ of $U$. In our case, we let (again, for sufficiently small $\delta>0$) $\manifold{O}':=\{ x\in\Rset^\mu;\, \, \dist (x, \manifold{M}')<2\delta\text{ and }\Pi(x)\in\manifold{M}'\}$. The extension $V$ of $U$ to $\manifold{O}'$ is defined as follows. Let  $\overline u$ be the extension of $u$ to $\manifold{M}'$ by reflection across $\partial\manifold{M}$. Then we set,  in $\manifold{O}'$, $V:=\overline u\circ \Pi$. Clearly, $V$ has the required properties. We continue the proof as in the case of compact manifolds without boundary.

The proof of Theorem~\ref{theorem_intro_lifting}
is complete.
\end{proof}

\section{Uniqueness of Sobolev liftings}
\label{s3}

The role of this section is to provide tools for checking that analytical obstructions are indeed obstructions. Roughly speaking, the question we address  here is the following. Assume that $u:\manifold{M}\to\manifold{N}$ has \emph{some} \enquote{bad} lifting $\lifting{u}$. How to make sure that \emph{all other} possible liftings are also \enquote{bad}?

We present two types of results. The former ones (Proposition~\ref{proposition_uniqueness}, Proposition~\ref{proposition_uniqueness_normal},  Corollary~\ref{db1}) are valid in particular in the case of the universal coverings of compact connected manifolds. The latter ones (Proposition~\ref{de1}, Corollary~\ref{de2}) are valid  for more general coverings, but require more assumptions on the bad lifting. Although, strictly speaking, it is possible to prove Theorem~\ref{b5} using only Corollary~\ref{de2}, we find instructive to provide two different proofs, relying on different topological assumptions and analytical arguments.

Throughout this section, we make the following assumptions.
\begin{gather}
\label{dc1}
\pi \in C^\infty ({\lifting{\manifold{N}}}, \manifold{N})\text{ is a Riemannian covering},
\\
\label{dc2}
\lifting{\manifold{N}}\text{ and }\manifold{N}\text{ are  connected}, 
\\
\label{dc3}
\begin{aligned}
&\manifold{M}\text{ is a relatively compact connected open subset of some $m$-dimensional} \\
&\text{Riemannian manifold }\manifold{M}'.
\end{aligned}
\end{gather}

This includes as special cases the interior of a smooth compact manifold and bounded open sets in $\Rset^m$. (However,  if we restrict to open sets in $\Rset^m$, boundedness is not essential.) Our assumption on $\manifold{M}$ emphasizes the fact that the smoothness of the boundary of $\manifold{M}$ plays no role here. 

A subset of $\manifold{M}$ is negligible if it is, near each point and in local coordinates, the image of a negligible set for the $m$-dimensional Lebegue measure. 

\medskip 

The uniqueness results are obtained under the assumption
\begin{equation}
\label{dc4}
sp\ge 1, 
\end{equation}
which is the relevant one for uniqueness \cite{Bourgain_Brezis_Mironescu_2000}. In view of the applications we have in mind, we also assume that 
\begin{equation}
\label{dc5}
0<s<1,
\end{equation}
but this latter assumption in not necessary for the validity of the results below.

\medskip
Uniqueness being a local matter, we consider maps in $W^{s,p}_{loc}(\manifold{M})$. By a standard argument, it then suffices to prove uniqueness for maps in $W^{s,p}(B)$, with $B$ a ball in $\Rset^m$.

\begin{proposition}
\label{proposition_uniqueness}
Assume \eqref{dc1}--\eqref{dc5} and,  in addition  $\inj (\manifold{N})>0$. \\
Let \(\lifting{u}, \lifting{v} \in W^{s, p}_{loc} (\manifold{M}, \lifting{\manifold{N}})\) be such that \(\pi \compose \lifting{u} = \pi \compose \lifting{v}\)  on \(\manifold{M}\). Then either \(\lifting{u} = \lifting{v}\) a.e.\  on \(\manifold{M}\) or \(\lifting{u} \ne \lifting{v}\) a.e.\ on \(\manifold{M}\).
\end{proposition}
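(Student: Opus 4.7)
The plan is to show that the set $E:=\{x\in\manifold{M}:\lifting{u}(x)=\lifting{v}(x)\}$ is either negligible or conegligible by producing a characteristic function belonging to $W^{s,p}$ and invoking a classical triviality result in the regime $sp\ge 1$. Since the question is local, I first reduce to the case of a ball $B\subset\Rset^m$. The key observation is that the measurable function $f(x):=d_{\lifting{\manifold{N}}}(\lifting{u}(x),\lifting{v}(x))$ is \emph{two-valued} in the following sense: because $\pi\compose\lifting{u}=\pi\compose\lifting{v}$ forces $\lifting{u}(x)$ and $\lifting{v}(x)$ to lie in a common fibre of $\pi$ for a.e.\ $x$, Lemma~\ref{lemma_small_isometry} (using $\inj(\manifold{N})>0$) implies that $f(x)\in\{0\}\cup(\inj(\manifold{N}),\infty)$ for a.e.\ $x$.

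The truncation $g:=\min(f/\inj(\manifold{N}),1)$ then coincides a.e.\ with $\chi_{B\setminus E}$, and the $1/\inj(\manifold{N})$-Lipschitz dependence of $g$ on $f$, together with the triangle inequality
\begin{equation*}
|f(x)-f(y)|\le d_{\lifting{\manifold{N}}}(\lifting{u}(x),\lifting{u}(y))+d_{\lifting{\manifold{N}}}(\lifting{v}(x),\lifting{v}(y)),
\end{equation*}
yields $|g|_{W^{s,p}(B)}^p\lesssim |\lifting{u}|_{W^{s,p}(B)}^p+|\lifting{v}|_{W^{s,p}(B)}^p<\infty$, so that $\chi_{B\setminus E}\in W^{s,p}(B)$. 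At this point I would invoke the classical fact (stemming from \cite{Bourgain_Brezis_Mironescu_2000}) that, under $0<s<1$ and $sp\ge 1$, the only characteristic functions lying in $W^{s,p}(B)$ are those of negligible or conegligible subsets of $B$; this gives the local dichotomy $|E\cap B|=0$ or $|B\setminus E|=0$.

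Globally, this dichotomy is upgraded through a standard connectedness argument: the subsets of $\manifold{M}$ consisting of points admitting some neighbourhood on which $E$ is, respectively, negligible or conegligible are both open, disjoint and cover $\manifold{M}$, so the connectedness of $\manifold{M}$ forces exactly one of them to be empty.

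The main obstacle is the invocation of the characteristic-function triviality, especially at the borderline $sp=1$. For $sp>1$ it comes cheaply from one-dimensional slicing combined with a Morrey-type embedding on almost every line; for $sp=1$ one must instead verify by direct computation that $\int_A\int_{B\setminus A}|x-y|^{-m-sp}\dif x\dif y$ diverges as soon as both $A$ and $B\setminus A$ have positive measure, which reduces to the non-integrability of $t^{-m-sp}$ in an appropriate neighbourhood of the essential boundary of $A$.
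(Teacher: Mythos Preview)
Your proposal is correct and follows essentially the same route as the paper: reduce to a ball, form the truncated distance $g(x)=\min\{d_{\lifting{\manifold{N}}}(\lifting{u}(x),\lifting{v}(x))/\ell,1\}$ (the paper uses $\ell=\min\{1,\inj(\manifold{N})\}$, but your choice $\ell=\inj(\manifold{N})$ works equally well), use Lemma~\ref{lemma_small_isometry} to see that $g$ is $\{0,1\}$-valued, check $g\in W^{s,p}$ via the triangle inequality, and invoke the constancy of $W^{s,p}$ maps into $\{0,1\}$ for $sp\ge 1$. The only difference is that the paper simply cites \cite{Bourgain_Brezis_Mironescu_2000}*{Theorem B.1} (and related references) for this last step, whereas you sketch an argument for it; your sketch for the borderline case $sp=1$ is along the right lines but would need to be fleshed out if you wanted a self-contained proof.
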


\begin{proof}As explained above, we may assume that $\manifold{M}$ is a ball and $\lifting{u}, \lifting{v} \in W^{s, p} (\manifold{M}, \lifting{\manifold{N}})$.  

Let us note that, if $\varphi:[0,\infty)\to\Rset$ is an $L$-Lipschitz function, then 
\begin{equation}
\label{dd1}
f:\manifold{M}\to\Rset, \ f(x):=\varphi( d_{\lifting{\manifold{N}}}(\lifting{u}(x), \lifting{v}(x))),\ \forall\, x\in \manifold{M}, 
\end{equation}
satisfies
\begin{equation*}
|f(x)-f(y)|\le L\, |d_{\lifting{\manifold{N}}}(\lifting{u}(x), \lifting{v}(x))-d_{\lifting{\manifold{N}}}(\lifting{u}(y), \lifting{v}(y))|\le L\, [d_{\lifting{\manifold{N}}}(\lifting{u}(x), \lifting{u}(y))+d_{\lifting{\manifold{N}}}(\lifting{v}(x), \lifting{v}(y))],
\end{equation*}
and thus $f\in W^{s,p}(\manifold{M}, \Rset)$.

Set $\ell:=\min\{1, \inj(\manifold{N})\}$ and   \(
  \varphi : [0, \infty) \to \Rset
  \),  $\varphi(t):=\min\{ t/\ell, 1\}$, $\forall\,  t\ge 0$.  The assumption $\pi\compose\lifting{u}=\pi\compose\lifting{v}$  implies, via Lemma~\ref{lemma_small_isometry},  that the corresponding function $f$ in \eqref{dd1} satisfies \(f (\manifold{M}) \subseteq \{0, 1\}\). Under the assumptions $sp\ge 1$ and $\manifold{M}$ connected, the space $W^{s,p}(\manifold{M}, \{ 0, 1\})$ contains only constant a.e.\ functions  \cite{Bourgain_Brezis_Mironescu_2000}*{theorem B.1} (see also \citelist{\cite{Bourgain_Brezis_Mironescu_2001}\cite{Brezis_2002}\cite{Bethuel_Demengel_1995}*{lemma A.1}\cite{Hardt_Kinderlehrer_Lin_1990}*{lemma 1.1}}). Thus either $f=0$ a.e.\ on \(\manifold{M}\), or $f=1$ a.e.\ on \(\manifold{M}\), whence the conclusion. 
\end{proof}

\begin{proposition}%
\label{proposition_uniqueness_normal}
Assume \eqref{dc1}--\eqref{dc5} and,  in addition, that   $\inj (\manifold{N})>0$ and that $\pi$ is a normal covering.\\
If \(\lifting{u}, \lifting{v} \in W^{s, p}_{loc} (\manifold{M}, \lifting{\manifold{N}})\) and if \(\pi \compose \lifting{u} = \pi \compose \lifting{v}\)  on \(\manifold{M}\), then there exists \(\tau \in \Aut (\pi)\) such that \(\lifting{v} = \tau \compose \lifting{u}\) a.e.\ on \(\manifold{M}\).
\end{proposition}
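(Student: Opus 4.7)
The plan is to bootstrap from the dichotomy of Proposition~\ref{proposition_uniqueness} using the countability of the deck transformation group and the transitivity afforded by normality.

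First, I would enumerate the group of deck transformations. By the discussion in Section~\ref{s2}, the connectedness assumption \eqref{dc2} implies that $\Aut(\pi)$ is at most countable; write $\Aut(\pi) = \{\tau_n : n \in \Nset\}$. For each $n$, set $\lifting{v}_n := \tau_n \compose \lifting{u}$. Since $\tau_n$ is a global isometry of $\lifting{\manifold{N}}$, the composition is again in $W^{s,p}_{loc}(\manifold{M}, \lifting{\manifold{N}})$, and $\pi \compose \lifting{v}_n = \pi \compose \lifting{u} = \pi \compose \lifting{v}$ on $\manifold{M}$.

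Next, I would apply the dichotomy. By Proposition~\ref{proposition_uniqueness} (which applies since \eqref{dc1}--\eqref{dc5} and $\inj(\manifold{N})>0$ hold), for each $n$ either
\[
\lifting{v} = \lifting{v}_n \text{ a.e.\ on } \manifold{M},\qquad\text{or}\qquad \lifting{v} \neq \lifting{v}_n \text{ a.e.\ on } \manifold{M}.
\]
It therefore suffices to exhibit one index $n$ for which the set $E_n := \{x \in \manifold{M} : \lifting{v}(x) = \lifting{v}_n(x)\}$ has positive measure; the dichotomy then forces $\lifting{v} = \tau_n \compose \lifting{u}$ a.e.\ on $\manifold{M}$, which is the conclusion.

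To produce such an index, I would use normality. For every $x \in \manifold{M}$ where $\lifting{u}(x)$ and $\lifting{v}(x)$ are both defined (i.e.\ off a null set), the points $\lifting{u}(x)$ and $\lifting{v}(x)$ lie in the same fiber of $\pi$, so transitivity of the $\Aut(\pi)$-action on fibers provides some $\tau \in \Aut(\pi)$ with $\lifting{v}(x) = \tau(\lifting{u}(x))$. Hence $\manifold{M}$ is covered, up to a null set, by the countable family $(E_n)_{n \in \Nset}$, and countable subadditivity forces at least one $E_n$ to have positive measure.

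The main subtlety I anticipate is purely measure-theoretic, namely the countability of $\Aut(\pi)$, which is essential to turn the pointwise existence of a deck transformation into a single one valid a.e.; this is already supplied by the discussion in Section~\ref{s2}. The rest is a straightforward combination of the dichotomy of Proposition~\ref{proposition_uniqueness} with the transitivity of the deck action on fibers of a normal covering.
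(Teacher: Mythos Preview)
Your argument is correct and follows essentially the same route as the paper: define the sets on which $\lifting{v}$ agrees with $\tau \compose \lifting{u}$, use normality plus countability of $\Aut(\pi)$ to get one such set of positive measure, and then invoke the dichotomy of Proposition~\ref{proposition_uniqueness}. The only cosmetic difference is the order in which you present the dichotomy and the measure-theoretic step.
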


In the case where \(\pi\) is the universal covering of a compact connected Riemannian manifold, Proposition~\ref{proposition_uniqueness_normal} is due to Bethuel and Chiron \cite[Lemma A.4]{Bethuel_Chiron_2007}.
\begin{proof}%
  [Proof of Proposition~\ref{proposition_uniqueness_normal}]
For each deck transformation \(\tau \in \Aut (\pi)\), we define the measurable set 
\[
 A_{\tau} := \left\{ x \in \manifold{M};\, \lifting{v}(x) = \tau \compose \lifting{u}(x)\right\}.
\]
Since the covering \(\pi\) is normal, we have 
\[
    \manifold{M} 
  = 
    \bigcup_{\tau \in \Aut (\pi)} A_\tau.
\]
Due to the at most countability of \(\Aut (\pi)\), there exists \(\tau \in \Aut (\pi)\) such that \(A_\tau\) is non-ne\-gli\-gible. For this $\tau$, combining the equality \(\pi \compose (\tau \compose \lifting{u}) = \pi \compose \lifting{u} = \pi \compose \lifting{v}\) on \(\manifold{M}\)
with the fact that $\tau\circ \lifting{u}\in W^{s,p} (\manifold{M}, \lifting{\manifold{N}})$ and 
with the previous proposition, we obtain  $\lifting{v}=\tau\circ\lifting{u}$ a.e.\ in \(\manifold{M}\).
\end{proof}

\begin{corollary}
\label{db1}
Assume \eqref{dc1}--\eqref{dc5} and,  in addition, that   $\inj (\manifold{N})>0$ and that $\pi$ is a normal covering.\\
Let $\lifting{u}\in W^{s,p}_{loc}(\manifold{M} , \lifting{\manifold{N}})\setminus W^{s,p} (\manifold{M} , \lifting{\manifold{N}})$ and set $u:=\pi\compose \lifting{u}$. Then $u$ has no lifting $\lifting{v}\in W^{s,p}(\manifold{M}, \lifting{\manifold{N}})$.
\end{corollary}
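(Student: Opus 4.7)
The plan is to argue by contradiction: assume that $u=\pi\compose\lifting{u}$ admits some lifting $\lifting{v}\in W^{s,p}(\manifold{M},\lifting{\manifold{N}})$, and then deduce that $\lifting{u}$ itself must belong to $W^{s,p}(\manifold{M},\lifting{\manifold{N}})$, contradicting the hypothesis.

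The natural tool is Proposition~\ref{proposition_uniqueness_normal}. Since both $\lifting{u}\in W^{s,p}_{loc}(\manifold{M},\lifting{\manifold{N}})$ and $\lifting{v}\in W^{s,p}(\manifold{M},\lifting{\manifold{N}})\subset W^{s,p}_{loc}(\manifold{M},\lifting{\manifold{N}})$ satisfy $\pi\compose\lifting{u}=\pi\compose\lifting{v}$ on $\manifold{M}$, and since $\pi$ is a normal covering with $\inj(\manifold{N})>0$ and $0<s<1$, $sp\ge 1$, Proposition~\ref{proposition_uniqueness_normal} yields a deck transformation $\tau\in\Aut(\pi)$ such that $\lifting{v}=\tau\compose\lifting{u}$ a.e.\ on $\manifold{M}$.

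The next step exploits the fact that $\tau$ is a \emph{global} isometry of $\lifting{\manifold{N}}$ (Section~\ref{s2}), so
\begin{equation*}
d_{\lifting{\manifold{N}}}(\lifting{u}(x),\lifting{u}(y))=d_{\lifting{\manifold{N}}}(\tau(\lifting{u}(x)),\tau(\lifting{u}(y)))=d_{\lifting{\manifold{N}}}(\lifting{v}(x),\lifting{v}(y))\quad\text{for a.e.\ }x,y\in\manifold{M}.
\end{equation*}
Integrating against $d_{\manifold{M}}(x,y)^{-(m+sp)}\dif x\dif y$ gives $|\lifting{u}|_{W^{s,p}(\manifold{M})}=|\lifting{v}|_{W^{s,p}(\manifold{M})}<\infty$, which combined with the boundedness of $\lifting{\manifold{N}}$-valued measurable maps (or simply the equality $\lifting{u}=\tau^{-1}\compose\lifting{v}$ a.e.) forces $\lifting{u}\in W^{s,p}(\manifold{M},\lifting{\manifold{N}})$, contradicting $\lifting{u}\notin W^{s,p}(\manifold{M},\lifting{\manifold{N}})$.

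There is essentially no serious obstacle; the entire content of the corollary is the invocation of Proposition~\ref{proposition_uniqueness_normal} together with the isometric nature of deck transformations. The only point deserving care is to verify that the hypotheses of that proposition are in force, which follows directly from the standing assumptions of the corollary.
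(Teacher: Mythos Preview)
Your proof is correct and matches the paper's own argument essentially line for line: argue by contradiction, apply Proposition~\ref{proposition_uniqueness_normal} to obtain a deck transformation $\tau$ relating $\lifting{u}$ and $\lifting{v}$, and conclude from the fact that $\tau$ is a global isometry (equivalently, $\lifting{u}=\tau^{-1}\compose\lifting{v}$) that $\lifting{u}\in W^{s,p}(\manifold{M},\lifting{\manifold{N}})$, a contradiction. The paper states this more tersely, but the content is identical.
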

\begin{proof}
  Argue by contradiction. By Proposition~\ref{proposition_uniqueness_normal}, there exists some $\tau\in \Aut(\pi)$ such that $\lifting{u}=\tau^{-1}\circ\lifting{v}$ a.e.\ on \(\manifold{M}\). 
  This leads to the contradiction $\lifting{u}\in W^{s,p} (\manifold{M}, \lifting{\manifold{N}})$.
\end{proof}

We now turn to uniqueness results involving solely the assumptions \eqref{dc1}--\eqref{dc5}.
\begin{proposition}
\label{de1}
Assume \eqref{dc1}--\eqref{dc5}.\\
Let \(\lifting{u}, \lifting{v} \in W^{s, p}_{loc} (\manifold{M}, \lifting{\manifold{N}})\) be such that
\(\pi \compose \lifting{u} = \pi \compose \lifting{v}\)  on \(\manifold{M}\). Assume, moreover, that   $\lifting{u}$ is continuous.\\
Then either \(\lifting{u} = \lifting{v}\) a.e.\ on \(\manifold{M}\) or \(\lifting{u} \ne \lifting{v}\) a.e.\ on \(\manifold{M}\).
\end{proposition}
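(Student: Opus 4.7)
The plan is to localize and then glue via the connectedness of \(\manifold{M}\). I would fix \(x_0 \in \manifold{M}\), set \(\lifting{p}_0 := \lifting{u}(x_0)\) (pointwise, thanks to the continuity of \(\lifting{u}\)), and choose a small evenly covered geodesic ball \(U = B_{\manifold{N}}(\pi(\lifting{p}_0), r)\), so that \(\pi^{-1}(U) = \bigsqcup_{j \in J} W_j\) with \(\lifting{p}_0 \in W_0\) and each \(\pi : W_j \to U\) a homeomorphism. Shrinking if needed, continuity of \(\lifting{u}\) yields a ball \(B := B_{\manifold{M}}(x_0, \rho) \subset\subset \manifold{M}\) such that \(\lifting{u}(B) \subset W_0\) and \(d_{\manifold{N}}(\pi(\lifting{u}(x)), \pi(\lifting{p}_0)) < r/4\) for every \(x \in B\).

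The main step---and the one I expect to be the key obstacle, since the hypothesis \(\inj(\manifold{N}) > 0\) underlying Lemma~\ref{lemma_small_isometry} and Proposition~\ref{proposition_uniqueness} has been dropped here---is a sheet-separation estimate: for almost every \(x \in B\), either \(\lifting{v}(x) = \lifting{u}(x)\), or \(d_{\lifting{\manifold{N}}}(\lifting{u}(x), \lifting{v}(x)) \ge r/4\). I would establish this by a direct path-lifting argument that bypasses any injectivity radius control. Indeed, \(\pi(\lifting{v}(x)) = \pi(\lifting{u}(x)) \in U\), so \(\lifting{v}(x)\) lies in some sheet \(W_{j(x)}\); when \(j(x) = 0\), bijectivity of \(\pi\) on \(W_0\) forces \(\lifting{v}(x) = \lifting{u}(x)\); when \(j(x) \ne 0\), any curve in \(\lifting{\manifold{N}}\) joining \(\lifting{u}(x)\) to \(\lifting{v}(x)\) of length less than \(r/4\) would \(\pi\)-project to a loop contained in \(B_{\manifold{N}}(\pi(\lifting{p}_0), r/2) \subset U\); by the disjointness of the sheets and continuity, the curve itself would remain in the sheet \(W_0\) containing its starting point, contradicting \(\lifting{v}(x) \in W_{j(x)} \ne W_0\).

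With this dichotomy in hand, I would set \(\phi(t) := \min(1, 4t/r)\) and
\begin{equation*}
f(x) := \phi \bigl( d_{\lifting{\manifold{N}}}(\lifting{u}(x), \lifting{v}(x)) \bigr), \quad x \in B.
\end{equation*}
The separation estimate forces \(f \in \{0, 1\}\) almost everywhere on \(B\), with \(f = 0\) precisely where \(\lifting{u} = \lifting{v}\). The Lipschitz character of \(\phi\) combined with the triangle inequality gives
\begin{equation*}
|f(x) - f(y)| \le (4/r)\, \bigl[ d_{\lifting{\manifold{N}}}(\lifting{u}(x), \lifting{u}(y)) + d_{\lifting{\manifold{N}}}(\lifting{v}(x), \lifting{v}(y)) \bigr],
\end{equation*}
so \(f \in W^{s,p}(B, \{0, 1\})\) (using that \(\lifting{u}, \lifting{v} \in W^{s,p}(B, \lifting{\manifold{N}})\) by the \(W^{s,p}_{\mathrm{loc}}\) assumption). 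Exactly as in the proof of Proposition~\ref{proposition_uniqueness}, the \(\{0, 1\}\)-valued rigidity result of \cite{Bourgain_Brezis_Mironescu_2000}*{Theorem B.1}, applied on the connected set \(B\) with \(sp \ge 1\), forces \(f\) to be a.e.\ constant on \(B\); hence, on each such ball, either \(\lifting{u} = \lifting{v}\) a.e.\ or \(\lifting{u} \ne \lifting{v}\) a.e.

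Finally, I would globalize by considering the open sets \(E_= := \{ x \in \manifold{M} \st \lifting{u} = \lifting{v} \text{ a.e.\ on some neighborhood of } x\}\) and \(E_{\ne} := \{ x \in \manifold{M} \st \lifting{u} \ne \lifting{v} \text{ a.e.\ on some neighborhood of } x\}\). They are disjoint, and the local dichotomy yields \(E_= \cup E_{\ne} = \manifold{M}\); the connectedness of \(\manifold{M}\) forces one of them to be empty, which is the stated dichotomy.
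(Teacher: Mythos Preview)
Your argument is correct and follows essentially the same route as the paper's own proof: localize via continuity of \(\lifting{u}\) to an evenly covered ball, build the truncated distance function \(f = \phi(d_{\lifting{\manifold{N}}}(\lifting{u},\lifting{v})) \in W^{s,p}(B,\{0,1\})\), invoke the \(\{0,1\}\)-valued rigidity result, and globalize by connectedness. Your explicit path-lifting justification of the sheet-separation dichotomy (with the \(r/4\) buffers) is in fact a welcome clarification of a step the paper leaves implicit, and your open/closed decomposition \(E_=\cup E_{\ne}=\manifold{M}\) is a minor repackaging of the paper's set \(D\).
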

\begin{proof}
  Assume that the set $C:=\{ y\in\manifold{M};\, \lifting{u}(y)= \lifting{v}(y)\}$ is non-negligible.  
  By continuity of \(\lifting{u}\), for each $x\in\manifold{M}$, there exist $\varepsilon=\varepsilon(x)>0$ and $r=r(x)>0$ such that $(\pi\compose\lifting{u} )\, (B_\varepsilon(x))$ is contained in an evenly covered geodesic  ball $U=U(x)$ of radius $r$.
   We consider the set
\begin{equation*}
D:=\{ x\in\manifold{M};\, C\cap B_\varepsilon(x)\text{ is non-negligible}\}.
\end{equation*}
By the assumption on $C$, the set $D$ is non-empty. 
We claim that 
\begin{equation*}
x\in D\implies [\text{the set $ B_\varepsilon(x) \setminus C$ is negligible}]. 
\end{equation*}
This claim clearly implies that the set $D$ is both open and closed, and thus, by connectedness, that $D=\manifold{M}$, whence (via the claim) the conclusion of the proposition.
It therefore remains to establish the claim. 

Let $x\in D$. Write $\pi^{-1}(U(x))$ as a disjoint union,  $\pi^{-1}(U(x))=\bigcup_{i\in I}V_i$, with $\pi:V_i\to U(x)$ a diffeomorphism. Since $\lifting{u}$ is continuous, there exists some $j\in I$ such that $\lifting{u}(B_{\varepsilon} (x))\subset V_j$. Let $\varphi(t):=\min\{ t/r, 1\}$, $\forall\, t\ge 0$, and set $f(y):=\varphi(d_{\lifting{N}}(\lifting{u}(y), \lifting{v}(y)))$, $\forall\, y\in B_\varepsilon (x)$. As in the proof of Proposition~\ref{proposition_uniqueness}, we have $f\in W^{s,p}(B_\varepsilon(x), \{ 0, 1\})$, and thus $f$ is constant. Since the set $f^{-1} (\{0\})$ is non-ne\-gli\-gible (by definition of the set $D$), we find that $f=0$ a.e.\ on \(B_{\varepsilon} (x)\), and thus $\lifting{u}=\lifting{v}$ a.e.\ in $B_\varepsilon (x)$, as claimed.
 \end{proof}

In the spirit of Corollary~\ref{db1}, we have the following consequence of Proposition~\ref{de1}.
\begin{corollary}
\label{de2}
Assume \eqref{dc1}--\eqref{dc5}. \\
Let $\lifting{u}\in W^{s,p}_{loc}(\manifold{M} , \lifting{\manifold{N}})\setminus W^{s,p} (\manifold{M}, \lifting{\manifold{N}})$ be a continuous map and set $u:=\pi\compose \lifting{u}$.\\
 If $u$ has a lifting $\lifting{v}\in W^{s,p}(\manifold{M}, \lifting{\manifold{N}})$, then $\lifting{u}\neq\lifting{v}$ a.e.
\end{corollary}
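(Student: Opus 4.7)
The plan is to deduce Corollary~\ref{de2} as an immediate consequence of Proposition~\ref{de1}, mimicking the contradiction argument used in Corollary~\ref{db1}.

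First, I would assume, toward a contradiction, that the conclusion fails, so that the set $\{x \in \manifold{M};\, \lifting{u}(x) = \lifting{v}(x)\}$ is non-negligible. By hypothesis, $\pi \compose \lifting{u} = u = \pi \compose \lifting{v}$ on $\manifold{M}$, and $\lifting{u}$ is continuous on $\manifold{M}$, so the hypotheses of Proposition~\ref{de1} are met. The dichotomy provided by that proposition then forces $\lifting{u} = \lifting{v}$ a.e.\ on $\manifold{M}$, since the alternative $\lifting{u} \neq \lifting{v}$ a.e.\ is incompatible with the assumed non-negligibility.

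Next, I would exploit the identification $\lifting{u} = \lifting{v}$ a.e.\ to derive the contradiction. Since $\lifting{v} \in W^{s,p}(\manifold{M}, \lifting{\manifold{N}})$ and $\lifting{u}$ agrees with $\lifting{v}$ almost everywhere, the map $\lifting{u}$ itself must belong to $W^{s,p}(\manifold{M}, \lifting{\manifold{N}})$ (the Gagliardo semi-norm only sees the a.e.\ equivalence class). This contradicts the standing assumption $\lifting{u} \in W^{s,p}_{\mathrm{loc}}(\manifold{M}, \lifting{\manifold{N}}) \setminus W^{s,p}(\manifold{M}, \lifting{\manifold{N}})$, completing the argument.

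There is no real obstacle here: once Proposition~\ref{de1} is in hand, the argument is essentially a one-line reduction, parallel to the proof of Corollary~\ref{db1} but with the continuity hypothesis on $\lifting{u}$ replacing the normality hypothesis on $\pi$. The only point worth noting is that the conclusion $\lifting{u} \neq \lifting{v}$ a.e.\ (rather than just $\lifting{u} \neq \lifting{v}$ somewhere) is genuinely an a.e.\ statement precisely because Proposition~\ref{de1} delivers an a.e.\ dichotomy.
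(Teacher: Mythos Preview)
Your proposal is correct and matches the paper's intended argument exactly: the paper does not even spell out a proof of Corollary~\ref{de2}, merely presenting it as a consequence of Proposition~\ref{de1} ``in the spirit of Corollary~\ref{db1}'', which is precisely the contradiction argument you give.
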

\section{Analytical singularity}
\label{s4}
In this section, we prove Theorem~\ref{b5}.  
In what follows, we assume that 
\begin{gather}
\label{f1}
0<s<1,\ p=1/s,
\\
\label{f2}
m\ge 2.
\end{gather}

\subsection{The basic ingredient}
\label{s4.1}
We start by proving  the existence of smooth maps $\lifting{u}:\Rset^m\to\lifting{\manifold{N}}$ such that $|\lifting{u}|_{W^{s,p}(B_1)}$ is arbitrarily large, while $|\pi\compose \lifting{u}|_{W^{s,p}(\Rset^m)}$ is arbitrarily small.

\begin{lemma}
\label{lemma_individual_singularity}
Assume \eqref{f1}--\eqref{f2}.  Let $r>0$ and $x_0\in\Rset^m$.\\
Let \(\pi \in C^\infty ({\lifting{\manifold{N}}}, \manifold{N})\) be a Riemannian covering, with $\lifting{\manifold{N}}$ connected.\\
Given \(\lifting{b}, \lifting{b}' \in {\lifting{\manifold{N}}}\) such that \(\lifting{b} \neq \lifting{b}'\) but \(\pi (\lifting{b}) = \pi (\lifting{b}')\),  and given $\varepsilon,  M>0$, there exists some $\lifting{u}\in C^\infty(\Rset^m, {\lifting{\manifold{N}}})$ such that
\begin{enumerate}
 \renewcommand{\labelenumi}{{\rm{(\roman{enumi})}}}
\renewcommand{\theenumi}{\roman{enumi}}
  \item \label{it_Ogh7Oo_1}
$\lifting{u}(x)=\lifting{b}$ when $|x-x_0|\ge r$,
\item \label{it_Ogh7Oo_2}
$\lifting{u}(x)=\lifting{b}'$ near $x_0$,
\item \label{it_Ogh7Oo_3}
$|\lifting{u}|_{W^{s,p}(B_r(x_0))}>M$,
\item \label{it_Ogh7Oo_4}
$|\pi\compose\lifting{u}|_{W^{s,p}(\Rset^m)}<\varepsilon$.
\end{enumerate}
\end{lemma}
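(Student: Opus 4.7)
Since $\lifting{\manifold{N}}$ is path-connected, I would pick a smooth path $\alpha : [0,1] \to \lifting{\manifold{N}}$ with $\alpha(0) = \lifting{b}$, $\alpha(1) = \lifting{b}'$, and set $L := \mathrm{length}(\alpha)$ and $D_0 := d_{\lifting{\manifold{N}}}(\lifting{b},\lifting{b}') > 0$. For two scale parameters $0 < \delta \ll \eta$ with $\eta + \delta < r$, to be tuned later, and a fixed smooth cut-off $\phi : [0,1] \to [0,1]$ with $\phi(0) = 1$, $\phi(1) = 0$, I would set
\begin{equation*}
\lifting{u}(x) :=
\begin{cases}
\lifting{b}' & \text{if } |x - x_0| \le \eta,\\
\alpha\!\left(\phi\!\left(\tfrac{|x - x_0| - \eta}{\delta}\right)\right) & \text{if } \eta < |x - x_0| < \eta + \delta,\\
\lifting{b} & \text{if } |x - x_0| \ge \eta + \delta.
\end{cases}
\end{equation*}
This $\lifting{u}$ is smooth and automatically satisfies (i) and (ii); the task is to tune $\eta$ and $\delta$ so that (iii) and (iv) also hold.

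\textbf{Key asymmetry at $sp = 1$.} The crucial quantity is the \emph{cross} contribution to the Gagliardo semi-norm from pairs $(x,y)$ with $x \in B_\eta(x_0)$ and $y \in \Rset^m \setminus B_{\eta+\delta}(x_0)$. On this set one has $d_{\lifting{\manifold{N}}}(\lifting{u}(x),\lifting{u}(y)) = D_0$, whereas $d_{\manifold{N}}(\pi\compose\lifting{u}(x), \pi\compose\lifting{u}(y)) = 0$ because $\pi(\lifting{b}) = \pi(\lifting{b}')$. Switching to polar coordinates around $x_0$, performing the two angular integrations first, and then substituting $u = |y - x_0| - |x - x_0|$ in the radial integral shows that at the critical exponent $sp = 1$ and for every $m \ge 2$,
\begin{equation*}
\iint_{|x - x_0| \le \eta,\; |y - x_0| \ge \eta + \delta} \frac{\dif x\, \dif y}{|x - y|^{m + sp}} \;\sim\; C_m\, \eta^{m-1}\,\log(\eta/\delta),
\end{equation*}
which diverges logarithmically as $\delta \to 0$ for $\eta$ fixed. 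Hence $\seminorm{\lifting{u}}_{W^{s,p}(B_r(x_0))}^p \gtrsim D_0^p\, C_m\, \eta^{m-1}\log(\eta/\delta)$, while the analogous cross contribution to $\seminorm{\pi\compose\lifting{u}}_{W^{s,p}(\Rset^m)}^p$ vanishes identically.

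\textbf{Conclusion and main obstacle.} The remaining contributions to $\seminorm{\pi\compose\lifting{u}}_{W^{s,p}(\Rset^m)}^p$ come from pairs with at least one point in the transition annulus. Since $\pi$ is a local isometry, since $\pi\compose\alpha$ has length at most $L$, and since the one-dimensional $W^{s,p}$ semi-norm is scale-invariant in the transverse direction at $sp = 1$, a standard estimate gives $\seminorm{\pi\compose\lifting{u}}_{W^{s,p}(\Rset^m)}^p \le C(L,m,s,p)\,\eta^{m-1}$, independently of $\delta$. The plan is then to first fix $\eta > 0$ small enough that $C\,\eta^{m-1} < \varepsilon^p$ (giving (iv)) and then $\delta > 0$ small enough that $D_0^p\, C_m\,\eta^{m-1}\log(\eta/\delta) > M$ (giving (iii)), which is possible because $\log(\eta/\delta) \to \infty$ while $\eta$ stays fixed. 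The main technical obstacle is the delicate polar-coordinate computation isolating the $\log(\eta/\delta)$ divergence of the cross integral: this logarithmic blow-up is the signature of criticality at $sp = 1$ (the integral remains bounded for $sp < 1$ and diverges algebraically for $sp > 1$) and is exactly what makes the analytical obstruction in Theorem~\ref{b5} possible.
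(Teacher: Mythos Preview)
Your argument is correct and close in spirit to the paper's, but the organization differs in two points worth noting.

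First, the paper uses a \emph{single} scale parameter. It fixes the transition annulus at radius $\tfrac12$ with width $\delta$, proves that $\limsup_{\delta\to 0}\seminorm{\pi\compose\lifting{u}_\delta}_{W^{s,p}(\Rset^m)}^p<\infty$ (by exactly the Lipschitz-plus-$L^\infty$ splitting you invoke, yielding the bound $\lesssim \abs{U_\delta}/\delta\lesssim 1$), and then handles~(iv) by an \emph{a posteriori} rescaling $\lifting{v}(x):=\lifting{u}(\lambda x)$, using that at $sp=1$ the Gagliardo energy scales like $\lambda^{1-m}$. Your two-parameter scheme $(\eta,\delta)$ absorbs this rescaling into the construction: fixing $\eta$ small is the paper's rescaling step, and sending $\delta\to 0$ is the paper's limit. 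The two are equivalent, and your version is arguably more direct.

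Second, for~(iii) the paper avoids your polar-coordinate computation entirely: it simply observes that $\lifting{u}_\delta\to\lifting{u}$ a.e., where $\lifting{u}$ is the two-valued map equal to $\lifting{b}'$ on $B_{1/2}$ and $\lifting{b}$ outside, and invokes Fatou together with the fact that such a map is \emph{not} in $W^{s,p}$ when $sp\ge 1$ (the same ``no nonconstant $\{0,1\}$-valued maps'' principle used in Proposition~\ref{proposition_uniqueness}). This gives $\lim_{\delta\to 0}\seminorm{\lifting{u}_\delta}_{W^{s,p}(B_1)}=\infty$ without ever isolating the logarithm. Your explicit $\eta^{m-1}\log(\eta/\delta)$ estimate is more informative---it quantifies the blow-up rate and makes the criticality of $sp=1$ visible---but it is also more work, and the hand-wave ``performing the two angular integrations first'' hides a computation that, while standard, deserves at least a sentence of justification (the dominant contribution comes from $x$ within distance $O(\eta)$ of $\partial B_\eta(x_0)$, where the inner integral behaves like $(\delta+\eta-\abs{x-x_0})^{-1}$).
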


\begin{proof}
With no loss of generality, we let $x_0=0$ and $r=1$. 

Assume that we are able to prove the lemma for \emph{some fixed} $\varepsilon_0$ and \emph{every} $M>0$.  Let $0<\varepsilon<\varepsilon_0$. 
Let $\lifting{u}$ as above, corresponding to $\varepsilon_0$ and to $M':=(\varepsilon M)/\varepsilon_0$. We define $\lambda>1$ by the equation $\lambda^{m-1}=\varepsilon_0/\varepsilon$, and we set $\lifting{v}(x):=\lifting{u}(\lambda x)$, $\forall\, x\in\Rset^m$. By scaling, $\lifting{v}$ satisfies items \eqref{it_Ogh7Oo_1}--\eqref{it_Ogh7Oo_4} (for $\varepsilon$ and $M$). It therefore suffices to establish the existence of $\lifting{u}$ satisfying \eqref{it_Ogh7Oo_1}--\eqref{it_Ogh7Oo_4} for \emph{some} $\varepsilon_0>0$ and \emph{arbitrary} $M>0$.

Since the manifold \({\lifting{\manifold{N}}}\) is connected, there exists a map \(\gamma \in C^\infty (\Rset, {\lifting{\manifold{N}}})\) such that \(\gamma (t) = \lifting{b}\) if \(t \le 0\) and \(\gamma (t) = \lifting{b}'\) if \(t \ge 1\). 
We define,  for every \(\delta\in (0,1)\),  the map \(\lifting{u}_\delta\in C^\infty(\Rset^m,   {\lifting{\manifold{N}}})\) through the formula
\[
 \lifting{u}_\delta(x) = \gamma \left(\frac{1 - 2 \abs{x}}{\delta}\right),\ \forall\, x\in\Rset^m.
\]

Clearly, $\lifting{u}_\delta$ satisfies \eqref{it_Ogh7Oo_1} and \eqref{it_Ogh7Oo_2}. In view of the above discussion, in order to complete the proof of the lemma it suffices to prove that
\begin{gather}
\label{ga1}
\lim_{\delta\to 0}|\lifting{u}_\delta|_{W^{s,p}(B_1)}=\infty,
\\
\label{ga2}
\limsup_{\delta\to 0}|\pi\circ\lifting{u}_\delta|_{W^{s,p}(\Rset^m)}<\infty.
\end{gather}
We note that 
\begin{equation*}
  \lim_{\delta\to 0 }\lifting{u}_\delta = \lifting{u} \ \text{a.e.\ in \(\Rset^m\)},
\end{equation*}
where 
\[
 \lifting{u} (x):=
 \begin{cases}
    \lifting{b}',& \text{if \(x \in {B_{1/2}}\)}\\
    \lifting{b},& \text{if \(x \in {\Rset^m\setminus B_{1/2}}\)}
 \end{cases},
\]
and that $\lifting{u}\not\in W^{s,p}(B_1, \lifting{\manifold{N}})$ 
(see the proof of Proposition~\ref{proposition_uniqueness}). This implies \eqref{ga1}.

In order to prove \eqref{ga2}, we set $u_\delta:=\pi\compose \lifting{u}_\delta$ and we note the following: 
\begin{gather}
\label{gb1}
u_\delta\equiv \pi(\lifting{b})\text{ in }\Rset^m\setminus U_\delta,\text{ where }U_\delta:=\{ x\in\Rset^m;\,  (1-\delta)/2<|x|< 1/2\},
\\
\label{gb2}
u_\delta\text{ is }\frac{C}{\delta}-\text{Lipschitz, with $C$ independent of $\delta$},
\\
\label{gb3}
d_{\manifold{N}}(u_\delta(x),u_\delta(y))\le C,\text{ with $C$ independent of $\delta$}.
\end{gather}
Combining \eqref{gb1}--\eqref{gb3}, we find (using the assumption $sp=1$) that\footnote{Here and in the sequel, $|U|$ denotes the Lebesgue measure of the set $U\subset\Rset^m$.}
\begin{equation*}
\begin{split}
|u_\delta|_{W^{s,p}(\Rset^m)}^p&
\lesssim \iint_{U_\delta\times\Rset^m}\frac{d_{\manifold{N}}(u_\delta(x),u_\delta(y))^p}{|x-y|^{m+1}}\dif x \dif y
\\
& \lesssim \iint_{x\in U_\delta, |x-y|\le\delta}\frac{|x-y|^p/\delta^p}{|x-y|^{m+1}}\dif x \dif y+\iint_{x\in U_\delta, |x-y|>\delta}\frac{1}{|x-y|^{m+1}}\dif x \dif y\\
&\lesssim \frac 1\delta\int_{U_\delta}\dif x=\frac 1\delta |U_\delta|\lesssim 1,
\end{split}
\end{equation*}
whence \eqref{ga2}.

The proof of Lemma~\ref{lemma_individual_singularity} is complete.
\end{proof}

\subsection{The analytic obstruction}
\label{s4.2}
Using Lemma~\ref{lemma_individual_singularity}, we construct an analytic singularity adapted to the case of the universal covering.

\begin{lemma}
\label{lemma_sing_compos}
Assume \eqref{f1}--\eqref{f2}.\\
Let \(\pi \in C^\infty ({\lifting{\manifold{N}}}, \manifold{N})\) be a non-trivial Riemannian covering, with $\lifting{\manifold{N}}$ connected. \\
Let $\manifold{M}\subset\Rset^m$ be a connected open set and let $a\in\overline{\manifold{M}}$. \\
Then
there exists a map \(\lifting{u} : \Rset^m \to \lifting{\manifold{N}}\) such that
\begin{enumerate}
 \renewcommand{\labelenumi}{{\rm{(\roman{enumi})}}}
\renewcommand{\theenumi}{\roman{enumi}}
  \item \label{it_Rosh8a_1} \(\lifting{u}\in C^\infty (\Rset^m \setminus \{a\}, \lifting{\manifold{N}})\),
  \item \label{it_Rosh8a_2}
$\lifting{u}\not\in W^{s,p}(\manifold{M}, \lifting{\manifold{N}})$,
\item \label{it_Rosh8a_3}
$\pi \compose \lifting{u}\in {W^{s, p}(\Rset^m, \manifold{N})}$,
\item \label{it_Rosh8a_4} \(\pi \compose \lifting{u}\) is a strong limit in \(W^{s, p} (\Rset^m, \manifold{N})\) of maps in \(C^\infty (\Rset^m, \manifold{N})\).
\end{enumerate}
\end{lemma}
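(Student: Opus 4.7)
\textbf{Proof proposal for Lemma~\ref{lemma_sing_compos}.} The plan is to accumulate the individual singularities provided by Lemma~\ref{lemma_individual_singularity} along a sequence of disjoint small balls converging to $a$, with geometric decay of the $W^{s,p}$--contribution on the target $\manifold{N}$ and geometric blow-up of the $W^{s,p}$--contribution on the source $\lifting{\manifold{N}}$. First, since the cover $\pi$ is non-trivial, we can pick distinct points $\lifting{b}\ne \lifting{b}'$ in a common fibre. Since $a\in\overline{\manifold{M}}$ and $\manifold{M}$ is open and connected, we can choose a sequence $(x_k)_{k\ge 1}\subset \manifold{M}$ with $x_k\to a$, and then pairwise disjoint radii $r_k\in (0, \dist(x_k,\partial\manifold{M}))$ with $r_k\to 0$, so that the balls $B_{r_k}(x_k)$ are pairwise disjoint and contained in $\manifold{M}$. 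For each $k$, Lemma~\ref{lemma_individual_singularity} supplies some $\lifting{u}_k\in C^\infty(\Rset^m,\lifting{\manifold{N}})$ with $\lifting{u}_k\equiv\lifting{b}$ off $B_{r_k}(x_k)$, $\lifting{u}_k\equiv\lifting{b}'$ near $x_k$, and
\[
 \seminorm{\lifting{u}_k}_{W^{s,p}(B_{r_k}(x_k))}> k\quad\text{and}\quad \seminorm{\pi\compose\lifting{u}_k}_{W^{s,p}(\Rset^m)}<2^{-k}.
\]
We define $\lifting{u}$ by setting $\lifting{u}:=\lifting{u}_k$ on each $B_{r_k}(x_k)$ and $\lifting{u}:=\lifting{b}$ on the complement. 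Since the $\lifting{u}_k$ match $\lifting{b}$ smoothly (with all derivatives vanishing) outside their respective balls, and only finitely many balls meet any given compact set disjoint from $\{a\}$, the resulting $\lifting{u}$ is smooth on $\Rset^m\setminus\{a\}$, giving \eqref{it_Rosh8a_1}.

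To see \eqref{it_Rosh8a_2}, observe that the balls $B_{r_k}(x_k)$ lie in $\manifold{M}$ and are disjoint, hence
\[
 \seminorm{\lifting{u}}_{W^{s,p}(\manifold{M})}^p\ge \sum_{k\ge 1} \seminorm{\lifting{u}_k}_{W^{s,p}(B_{r_k}(x_k))}^p>\sum_{k\ge 1} k^p=\infty.
\]
For \eqref{it_Rosh8a_3}, set $u:=\pi\compose\lifting{u}$, $v_k:=\pi\compose\lifting{u}_k$, and $c:=\pi(\lifting{b})$. Embedding $\manifold{N}\subset\Rset^\nu$ and using the equivalent Euclidean Gagliardo semi-norm, we have $u-c=\sum_k (v_k-c)$, with each $v_k-c$ supported in the disjoint ball $B_{r_k}(x_k)$. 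Since subtracting a constant preserves the Gagliardo semi-norm, the triangle inequality on $\Rset^\nu$-valued maps gives
\[
 \seminorm{u}_{W^{s,p}(\Rset^m)}=\seminorm{u-c}_{W^{s,p}(\Rset^m)}\le \sum_{k\ge 1}\seminorm{v_k-c}_{W^{s,p}(\Rset^m)}=\sum_{k\ge 1}\seminorm{v_k}_{W^{s,p}(\Rset^m)}<\sum_{k\ge 1}2^{-k}<\infty.
\]

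Finally, for \eqref{it_Rosh8a_4}, truncate: for each $N\ge 1$, define $\lifting{u}^{(N)}$ by keeping $\lifting{u}^{(N)}=\lifting{u}_k$ on $B_{r_k}(x_k)$ for $k\le N$ and setting $\lifting{u}^{(N)}=\lifting{b}$ everywhere else. Since finitely many balls accumulate away from $a$ and equal $\lifting{b}$ on a neighborhood of $a$, each $\lifting{u}^{(N)}$ is smooth on $\Rset^m$; hence $u^{(N)}:=\pi\compose\lifting{u}^{(N)}\in C^\infty(\Rset^m,\manifold{N})$. The same triangle inequality gives
\[
 \seminorm{u-u^{(N)}}_{W^{s,p}(\Rset^m)}\le \sum_{k> N}\seminorm{v_k}_{W^{s,p}(\Rset^m)}\le \sum_{k> N}2^{-k}\to 0,
\]
while $u^{(N)}\to u$ in $L^p$ trivially since the balls have vanishing total measure, yielding strong $W^{s,p}$ convergence.

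The main obstacle is ensuring that the cross-terms between different balls do not spoil the semi-norm bound for $\pi\compose\lifting{u}$: this is what forces us to work with $v_k-c$ (supported in disjoint balls) and to exploit that a constant shift leaves Gagliardo semi-norms unchanged, so that the countable sub-additivity of the $L^p$-norm of the difference quotient takes over. Everything else is essentially a bookkeeping of the two competing behaviours produced by Lemma~\ref{lemma_individual_singularity}.
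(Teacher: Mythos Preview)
Your proof is correct and follows the same blueprint as the paper: stack the building blocks of Lemma~\ref{lemma_individual_singularity} in pairwise disjoint balls accumulating at $a$. There are two minor differences worth recording. For \eqref{it_Rosh8a_3}, the paper appeals to a countable patching lemma \cite[Lemma~2.3]{Monteil_Van_Schaftingen}, which gives the stronger $\ell^p$--type bound $\seminorm{u}_{W^{s,p}}^p\lesssim\sum_k\seminorm{v_k}_{W^{s,p}}^p$; your Minkowski/triangle-inequality argument (subtract the common value $c$, then sum) is more elementary and entirely sufficient here---just note that passing to the infinite sum tacitly uses Fatou (lower semicontinuity of the Gagliardo energy under a.e.\ convergence of the partial sums). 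For \eqref{it_Rosh8a_4}, the paper truncates by spheres $\{\abs{x-a}=r_j\}$ chosen to miss every ball of the family (this is the extra requirement~\eqref{it_faeP2J_4} in the paper's construction), whereas you simply discard the balls with index $k>N$; your variant is cleaner and makes that additional geometric constraint unnecessary.
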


Before proceeding to the proof of the lemma, we explain the meaning of items \eqref{it_Rosh8a_2} and \eqref{it_Rosh8a_4}. In \eqref{it_Rosh8a_2}, the $W^{s,p}$ semi-norm involves the Euclidean distance in $\Rset^m$, not the geodesic distance on $\manifold{M}$. 
The meaning of item \eqref{it_Rosh8a_4} is the following. We embed $\manifold{N}$ into some $\Rset^\nu$. Then there exist a sequence  $(u^j)\subset C^\infty(\Rset^m, \manifold{N})$ and some $b\in\manifold{N}$ such that $u^j-b, \pi\circ\lifting{u}-b\in W^{s,p}(\manifold{M}, \Rset^\nu)$ and $u^j-u\to 0$ in $W^{s,p}(\Rset^m, \Rset^\nu)$ as $j\to\infty$.

\begin{proof}[Proof of Lemma \ref{lemma_sing_compos}]
Since $m\ge 2$ and $a\in\overline{\manifold{M}}$, there exists a sequence of closed  balls $(\overline{B}_{\rho_k}(a_k))_{k\ge 0}$ such that:
\begin{enumerate}
 \renewcommand{\labelenumi}{{\rm{(\alph{enumi})}}}
\renewcommand{\theenumi}{\alph{enumi}}
  \item \label{it_faeP2J_1}
$\overline{B}_{\rho_k}(a_k)\subset \manifold{M}\setminus\{a\}$, $\forall\, k$,
\item \label{it_faeP2J_2}
the balls are mutually disjoint,
\item \label{it_faeP2J_3}
$a_k\to a$ (and thus $\rho_k\to 0$) as $k\to\infty$,
\item \label{it_faeP2J_4}
there exists a sequence $r_j\searrow 0$ such that $\{ x\in\Rset^m;\, |x-a|=r_j\}\cap \overline{B}_{\rho_k}(a_k)=\emptyset$, $\forall\, j$, $\forall\, k$.
\end{enumerate}

Since, by assumption, the cover \(\pi\) is non-trivial, there exist $\lifting{b}$ and $\lifting{b}'$ as in Lemma~\ref{lemma_individual_singularity}.
Let $(\varepsilon_k)_{k\ge 0}$ be a sequence of positive numbers to be defined later. Let, for every $k \ge 0$, $\lifting{u}_k$ be the map corresponding, as in Lemma~\ref{lemma_individual_singularity},  to $B_{\rho_k}(a_k)$, $\varepsilon_k$ and $M:=k+1$. We set,  for each \(x \in \Rset^m\), 
\[
 \lifting{u} (x):=
 \begin{cases}
  \lifting{u}_k (x),& \text{if \(x \in B_{\rho_k} (a_k)\) for some \(k \ge 0\)}\\
  \lifting{b}, & \text{otherwise}
 \end{cases}.
\]

Clearly, \eqref{it_Rosh8a_1} holds. Also clearly,
\begin{equation*}
|\lifting{u}|_{W^{s,p}(\manifold{M})}^p\ge |\lifting{u}|_{W^{s,p}(B_{\rho_k}(a_k))}^p\ge k+1,\ \forall\, k\ge 0,
\end{equation*}
and thus assertion \eqref{it_Rosh8a_2} holds.
By the countable patching property of Sobolev maps
\cite[Lemma 2.3]{Monteil_Van_Schaftingen}, we have (using the assumption \(0<s<1\)),
\begin{equation}
\label{eq_sing_compos_upper_bound_series}
    |\pi \compose \lifting{u}|_{W^{s, p} (\Rset^m)}^p
\le 
    2^p 
    \sum_{k \ge 0} \rho_k^{m - 1}
    |\pi \compose \lifting{u}_k|_{W^{s, p} (\Rset^m)}^p<2^p\sum_{k\ge 0}\rho_k^{m - 1}\varepsilon_k.
    \end{equation}
    We now choose $\varepsilon_k$ such that $\sum_{k\ge 0}\rho_k^{m - 1}\varepsilon_k<\infty$ and obtain \eqref{it_Rosh8a_3}.
    
    Finally, it remains to prove item \eqref{it_Rosh8a_4}. For \emph{scalar} functions, this follows from \eqref{it_Rosh8a_3}, but some care is needed for manifold-valued maps.  With $r_j$ as in \eqref{it_faeP2J_4}, set $u:=\pi\compose\lifting{u}:\Rset^m\to\manifold{N}$, $b:=\pi(\lifting{b})$  and define 
    \begin{equation*}
  u^j(x):=\begin{cases}
  u(x),&\text{if }|x-a|\ge r_j\\
  b,&\text{if }|x-a|<r_j
  \end{cases}.
    \end{equation*}
    Clearly, $u^j\in C^\infty(\Rset^m, \manifold{N})$, $u^j-b, u-b\in W^{s,p}(\Rset^m, \Rset^\nu)$ and $u^j- u\to 0$ a.e.\ and in $L^p(\Rset^m)$ as $j\to \infty$. It thus suffices to prove that $|u-u^j|_{W^{s,p}(\Rset^m)}\to 0$ as $j\to\infty$. For this purpose, we note that $|u-u^j|_{W^{s,p}(\Rset^m)}=|v^j|_{W^{s,p}(\Rset^m)}$, where
    \begin{equation}
    \label{gc1}
    v^j:=u-u^j+b=\begin{cases}
    \pi\compose \lifting{u}_k,&\text{in } B_{\rho_k}(a_k),\text{ if }B_{\rho_k}(a_k)\subset B_{r_j}(a)
    \\
    b,&\text{elsewhere}
    \end{cases}.
    \end{equation}
  By \eqref{eq_sing_compos_upper_bound_series} and the choice of $\varepsilon_k$,  we have $|v^j|_{W^{s,p}(\Rset^m)}\to 0$ as $j\to\infty$. 
  
  The proof of Lemma~\ref{lemma_sing_compos} is complete. 
  \end{proof} 

\noindent
\emph{Proof of Theorem~\ref{b5} for the universal covering of connected, non-simply-connected, compact Riemannian manifolds $\manifold{N}$.} When $\manifold{M}$ is a smooth bounded open set in $\Rset^m$, we first note that, on $\manifold{M}\times\manifold{M}$, the geodesic distance $d_{\manifold{M}}$ is equivalent to the Euclidean distance in $\Rset^m$. It then suffices to
combine Lemma~\ref{lemma_sing_compos} with Corollary~\ref{db1} (applied in the connected set $\manifold{M}\setminus\{a\}$). The case of a manifold reduces to this special case, since the  analytical singularity constructed in Lemma \ref{lemma_sing_compos}
is constant outside  an arbitrarily small neighborhood of $a$. 
\qed

\subsection{A variant of the analytic obstruction}
\label{s4.3}

In the general case, Theorem~\ref{b5} can be obtained via a suitable variant of Lemma~\ref{lemma_sing_compos}.

\begin{lemma}
\label{p2}
Assume \eqref{f1}--\eqref{f2}.\\
Let \(\pi \in C^\infty ({\lifting{\manifold{N}}}, \manifold{N})\) be a non-trivial Riemannian covering, with $\lifting{\manifold{N}}$ connected. \\
Let $b\in \manifold{N}$ and write $\pi^{-1}(\{b\})=\{ \lifting{b}_i;\, i\in I\}$. \\
Let $\manifold{M}\subset\Rset^m$ be a connected  open set and let $a\in\overline{\manifold{M}}$. \\
Then there exist a family $(U_i)_{i\in I}$ of open sets and a family $(\lifting{u}_i)_{i\in I}$ of maps such that
\begin{enumerate}
 \renewcommand{\labelenumi}{{\rm{(\roman{enumi})}}}
\renewcommand{\theenumi}{\roman{enumi}}
  \item \label{it_Ix4zi2_1}
$U_i\subsetneqq\manifold{M}\setminus\{a\}$, $\forall\, i\in I$,
\item \label{it_Ix4zi2_2}
$\displaystyle U_i\cap \overline{\textstyle\bigcup_{j\neq i}U_j}=\emptyset$,  $\forall\, i\in I$,
\item \label{it_Ix4zi2_3}
$\displaystyle\manifold{M}\setminus \overline{\textstyle\bigcup_{j\neq i}U_j}$ is connected, $\forall\, i\in I$,
\item \label{it_Ix4zi2_4}
$\lifting{u}_i\in C^\infty(\Rset^m\setminus\{a\}, \lifting{\manifold{N}})$,  $\forall\, i\in I$,
\item \label{it_Ix4zi2_5}
$\lifting{u}_i\equiv\lifting{b}_i$ in $\Rset^m\setminus (U_i\cup\{a\})$, 
  $\forall\, i\in I$,
  \item \label{it_Ix4zi2_6}
  $\lifting{u}_i\not\in W^{s,p}(U_i, \lifting{N})$,  $\forall\, i\in I$,
  \item \label{it_Ix4zi2_7}
  if we set \[u:=\begin{cases}
  \pi\compose\lifting{u}_i,&\text{in }U_i\\
  b,&\text{in }\Rset^m\setminus\bigcup_{i\in I}U_i
  \end{cases},\] 
  then $u\in C^\infty(\Rset^m\setminus\{a\}, {\manifold{N}})$ and $u\in W^{s,p}(\Rset^m, {\manifold{N}})$,
  \item \label{it_Ix4zi2_8}
$u$ is the strong limit in \(W^{s, p} (\Rset^m, \manifold{N})\) of maps in \(C^\infty (\Rset^m, \manifold{N})\).
\end{enumerate}
\end{lemma}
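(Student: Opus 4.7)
The plan is to iterate the construction of Lemma~\ref{lemma_sing_compos}, using one \enquote{channel} \(U_i\) per point \(\lifting{b}_i\) of the fibre \(\pi^{-1} (\{b\})\). Since \(\pi\) is a Riemannian covering, \(I\) is at most countable; enumerate it and, using the non-triviality of \(\pi\), choose for each \(i \in I\) some \(\lifting{b}'_i \in \pi^{-1}(\{b\}) \setminus \{\lifting{b}_i\}\).

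The \emph{geometric scaffold.} Since \(m \ge 2\), the sphere \(\Sset^{m - 1}\) is uncountable, so we may choose pairwise distinct unit vectors \(\omega_i\) pointing into \(\manifold{M}\) from \(a\), together with (when \(I\) is infinite) an accumulation direction \(\omega_\infty \notin \{\omega_i;\, i \in I\}\). Pick angular half-widths \(\eta_i > 0\) and radii \(\epsilon_i > 0\) all so small that the open cones
\[
U_i := \bigl\{ a + t \omega;\, 0 < t < \epsilon_i,\ \omega \in \Sset^{m - 1},\ \abs{\omega - \omega_i} < \eta_i \bigr\} \cap \manifold{M}
\]
are contained in \(\manifold{M} \setminus \{a\}\), have pairwise disjoint closures (meeting only possibly at \(a\)), and each \(U_i\) keeps positive distance from the limit ray based at \(a\) in direction \(\omega_\infty\). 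These properties give \eqref{it_Ix4zi2_1} and \eqref{it_Ix4zi2_2}. Item \eqref{it_Ix4zi2_3} is verified by observing that, for \(m \ge 2\), any two points of \(\manifold{M} \setminus \overline{\bigcup_{j \ne i} U_j}\) may be joined through the portion of \(\manifold{M}\) at distance \(\ge \sup_j \epsilon_j\) from \(a\), which lies entirely in the complement.

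Inside each \(U_i\), we install a sequence of analytic singularities of the type produced by Lemma~\ref{lemma_individual_singularity}. Pick a sequence \((a_{i, k})_{k \ge 0}\) in \(U_i\) with \(a_{i, k} \to a\), and radii \(\rho_{i, k} > 0\) so that the closed balls \(\overline{B}_{\rho_{i, k}}(a_{i, k})\) are pairwise disjoint across all \((i, k)\) and contained in \(U_i\). Applying Lemma~\ref{lemma_individual_singularity} with data \(\lifting{b}_i, \lifting{b}'_i, r = \rho_{i, k}, x_0 = a_{i, k}, M = k + 1, \varepsilon = \varepsilon_{i, k}\) yields \(\lifting{u}_{i, k} \in C^\infty(\Rset^m, \lifting{\manifold{N}})\) equal to \(\lifting{b}_i\) outside \(B_{\rho_{i, k}}(a_{i, k})\) with \(\seminorm{\lifting{u}_{i, k}}_{W^{s, p}(B_{\rho_{i, k}}(a_{i, k}))} \ge k + 1\) and \(\seminorm{\pi \compose \lifting{u}_{i, k}}_{W^{s, p}(\Rset^m)} < \varepsilon_{i, k}\). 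Define \(\lifting{u}_i\) to agree with \(\lifting{u}_{i, k}\) on each \(B_{\rho_{i, k}}(a_{i, k})\) and with \(\lifting{b}_i\) on the rest of \(\Rset^m \setminus \{a\}\). Then \eqref{it_Ix4zi2_4} and \eqref{it_Ix4zi2_5} hold by construction, while \eqref{it_Ix4zi2_6} follows from \(\seminorm{\lifting{u}_i}_{W^{s, p}(U_i)} \ge \seminorm{\lifting{u}_{i, k}}_{W^{s, p}(B_{\rho_{i, k}}(a_{i, k}))} \ge k + 1\) for every \(k\).

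The patched map \(u\) is smooth on \(\Rset^m \setminus \{a\}\), since the singularity balls accumulate only at \(a\). The countable patching estimate \cite[Lemma 2.3]{Monteil_Van_Schaftingen}, exactly as in \eqref{eq_sing_compos_upper_bound_series}, gives
\[
\seminorm{u}_{W^{s, p}(\Rset^m)}^p
\le 2^p \sum_{i, k} \rho_{i, k}^{m - 1} \seminorm{\pi \compose \lifting{u}_{i, k}}_{W^{s, p}(\Rset^m)}^p
< 2^p \sum_{i, k} \rho_{i, k}^{m - 1} \varepsilon_{i, k}^p,
\]
which is made finite by choosing the \(\varepsilon_{i, k}\) so that \(\rho_{i, k}^{m - 1} \varepsilon_{i, k}^p \le 2^{-(i + k)}\), establishing \eqref{it_Ix4zi2_7}. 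Finally, \eqref{it_Ix4zi2_8} is obtained as in the last step of Lemma~\ref{lemma_sing_compos}: pick radii \(r_j \searrow 0\) that avoid every sphere \(\partial B_{\rho_{i, k}}(a_{i, k})\), replace \(u\) by the constant \(b\) inside \(B_{r_j}(a)\) to produce \(u^j \in C^\infty(\Rset^m, \manifold{N})\), and bound \(\seminorm{u - u^j}_{W^{s, p}(\Rset^m)}\) by the tail of the same patching sum, which vanishes as \(j \to \infty\). The main obstacle is the topological bookkeeping of the scaffold: arranging infinitely many \enquote{thorns} clustering at \(a\) whose closures are pairwise disjoint away from \(a\) \emph{and} whose union, minus any single thorn, has connected complement; once this geometric setup is in place, the analytic part is a direct iteration of Lemma~\ref{lemma_individual_singularity} and of the countable patching estimate already used in Lemma~\ref{lemma_sing_compos}.
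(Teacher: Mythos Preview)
Your analytic core matches the paper exactly and is correct: place disjoint balls accumulating at \(a\), one family per fibre point \(\lifting{b}_i\), invoke Lemma~\ref{lemma_individual_singularity} in each ball, and then use the countable patching estimate and the truncation-by-spheres argument for \eqref{it_Ix4zi2_7} and \eqref{it_Ix4zi2_8}.

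The discrepancy is purely in the geometric scaffold. The paper does not use cones: it simply sets \(U_i := \bigcup_{k \ge 0} B_{\rho_{k,i}}(a_{k,i})\), the union of the balls themselves. Items \eqref{it_Ix4zi2_1}--\eqref{it_Ix4zi2_3} then follow from the balls being mutually disjoint and accumulating only at \(a\): any arc in the connected set \(\manifold{M} \setminus \{a\}\) (or in \(\manifold{M}\), if \(a \notin \manifold{M}\)) is compact, stays at positive distance from \(a\), hence meets only finitely many of the closed balls, and each of these can be detoured locally since \(m \ge 2\). Your cone construction, by contrast, has two genuine gaps. First, for a general connected open \(\manifold{M}\) with \(a \in \overline{\manifold{M}}\) there is no reason any straight segment issued from \(a\) lies in \(\manifold{M}\), let alone infinitely many disjoint truncated sectors (think of \(\manifold{M}\) spiralling toward \(a\), or of \(a\) sitting at a cusp of \(\partial \manifold{M}\)). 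Second, your verification of \eqref{it_Ix4zi2_3} asserts both that the portion of \(\manifold{M}\) at distance \(\ge \sup_j \epsilon_j\) from \(a\) is connected and that every point of \(\manifold{M} \setminus \overline{\bigcup_{j \ne i} U_j}\) can reach it without crossing the remaining cones; neither is guaranteed for arbitrary \(\manifold{M}\), and when \(m = 2\) the slivers left between infinitely many closed sectors rooted at \(a\) are genuinely separated from one another near \(a\). The remedy is simply to drop the cones and let \(U_i\) be the union of balls, exactly as the paper does; everything else in your argument then goes through verbatim.
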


\begin{proof}
Our construction is again based on a family of balls, but this time indexed over $k\ge 0$ and $i\in I$ (we recall that the set \(I\) is at most countable). The requirements on the closed  balls $(\overline{B}_{\rho_{k,i}}(a_{k,i}))_{k\ge 0, i\in I}$ are the following:
\begin{enumerate}[(a)]
  \item \label{it_aeW4Ei_1}
$\overline{B}_{\rho_{k,i}}(a_{k,i})\subset \manifold{M}\setminus\{a\}$, $\forall\, k$, $\forall\, i$,
\item \label{it_aeW4Ei_2}
the  balls are mutually disjoint,
\item \label{it_aeW4Ei_3}
$a_{k,i}\to a$ (and thus $\rho_{k,i}\to 0$) as $k + i\to\infty$,
\item \label{it_aeW4Ei_4}
there exists a sequence $r_j\searrow 0$ such that $\{ x\in\Rset^m;\, |x-a|=r_j\}\cap \overline{B}_{\rho_{k,i}}(a_{k,i})=\emptyset$, $\forall\, j$, $\forall\, k$, $\forall\, i$.
\end{enumerate}
Set $U_i:=\bigcup_{k\ge 0}{B}_{\rho_{k,i}}(a_{k,i})$. 
Clearly, $\overline{U_i}=\bigcup_{k\ge 0}\overline{B}_{\rho_{k,i}}(a_{k,i})\cup\{ a\}$, and 
\eqref{it_Ix4zi2_1} and \eqref{it_Ix4zi2_2} hold.
By a straightforward argument, assumptions  \eqref{it_aeW4Ei_2} and \eqref{it_aeW4Ei_3}, combined with the fact that $\manifold{M}$ is connected and \(m \ge 2\), imply \eqref{it_Ix4zi2_3}. (Actually, we have the more general property that $\manifold{M}\setminus \overline{\bigcup_{j\in J}U_j}$ is connected, $\forall\, J\subseteq I$.) 
 
We next define $\lifting{u}_i$, $i\in I$. Since the covering \(\pi\) is non-trivial, we can consider, for each $i$, some $j=j(i)\in I\setminus\{i\}$. Let, for every $k$, $\lifting{u}_{k, i}$ correspond,   as in Lemma~\ref{lemma_individual_singularity},  to $\lifting{b}:=\lifting{b}_{i}$, $\lifting{b}':= \lifting{b}_{j}$, to the ball $B_{\rho_{k,i}}(a_{k,i})$, and to the numbers $\varepsilon_{k,i}$ and $M:=k+1$. By analogy with the proof of Lemma~\ref{lemma_sing_compos}, we require that $\sum_{k\ge 0, i\in I}\rho_{k, i}^{m-1}\varepsilon_{k,i}<\infty$. 
  We set
\[
 \lifting{u}_i (x):=
 \begin{cases}
  \lifting{u}_{k,i} (x),& \text{if \(x \in B_{\rho_{k,i}} (a_{k,i})\) for some \(k \ge 0\)}\\
  \lifting{b}_i, & \text{otherwise}
 \end{cases}.
\]
Following the proof of Lemma~\ref{lemma_sing_compos}, we find that \eqref{it_Ix4zi2_4}  through \eqref{it_Ix4zi2_8} hold. 

  The proof of Lemma~\ref{p2} is complete. 
\end{proof}

\begin{proof}[Proof of Theorem~\ref{b5} in the general case] Again, we may assume that $\manifold{M}$ is an open set in $\Rset^m$. Let $u$ be as in Lemma~\ref{p2}. Argue by contradiction and assume that $u=\pi\compose\lifting{u}$ for some $\lifting{u}\in W^{s,p}(\manifold{M}, \lifting{\manifold{N}})$. 
Let $i\in I$. 
By Corollary~\ref{de2} applied to $u$ in the connected open set $\manifold{M}\setminus\overline{\bigcup_{j\neq i}U_j}$, for the smooth lifting $\lifting{u}_i$, we have $\lifting{u}\neq \lifting{u}_i$ a.e.\ in the set $V:=\manifold{M}\setminus \overline{\bigcup_{j\in I}U_j}$. Thus, a.e.\ in $V$, we have $\lifting{u}(x)\not\in \{ \lifting{b}_i;\, i\in I\}$. This contradicts the facts that $V$ has positive measure and $\pi\circ\lifting{u}(x)=b$, $\forall\, x\in V$.
\end{proof}

\begin{bibdiv}
  \begin{biblist}
    
    \bib{adams}{book}{
      author={Adams, Richard A.},
      title={Sobolev spaces},
      publisher={ Academic Press},
      address={New York-London},
      series={Pure and Applied
        Mathematics},
      volume={65},
      year={1975},
    }

    \bib{Ball_Zarnescu_2011}{article}{
      author={Ball, John M.},
      author={Z\u arnescu, Arghir},
      title={Orientability and energy minimization in liquid crystal models},
      journal={Arch. Ration. Mech. Anal.},
      volume={202},
      date={2011},
      number={2},
      pages={493--535},
      issn={0003-9527},
      %   review={\MR{2847533}},
      doi={10.1007/s00205-011-0421-3},
    }
    
    \bib{bethueliff}{article}{
      author = {Bethuel, Fabrice},
      title = {A new obstruction to the extension problem for Sobolev maps between manifolds},
      journal = {J. Fixed Point Theory Appl.},
      volume = {15},
      year = {2014},
      number = {1},
      pages = {155--183},
      issn = {1661-7738},
%       mrclass = {46T30},
      %   mrnumbER = {3282786},
      doi = {10.1007/s11784-014-0185-0},
      url = {http://dx.doi.org/10.1007/s11784-014-0185-0},
    }

    \bib{Bethuel_Chiron_2007}{article}{
      author={Bethuel, Fabrice},
      author={Chiron, David},
      title={Some questions related to the lifting problem in Sobolev spaces},
      conference={
        title={Perspectives in nonlinear partial differential equations},
      },
      book={
        series={Contemp. Math.},
        volume={446},
        publisher={Amer. Math. Soc.}, 
        address={Providence, R.I.},
      },
      date={2007},
      pages={125--152},
      %    review={\MR{2373727}},
    }
    
    \bib{Bethuel_Demengel_1995}{article}{
      author={Bethuel, F.},
      author={Demengel, F.},
      title={Extensions for Sobolev mappings between manifolds},
      journal={Calc. Var. Partial Differential Equations},
      volume={3},
      date={1995},
      number={4},
      pages={475--491},
      issn={0944-2669},
      doi={10.1007/BF01187897},
    }
    
    \bib{Bourgain_Brezis_Mironescu_2000}{article}{
      author={Bourgain, Jean},
      author={Brezis, Haim},
      author={Mironescu, Petru},
      title={Lifting in Sobolev spaces},
      journal={J. Anal. Math.},
      volume={80},
      date={2000},
      pages={37--86},
      issn={0021-7670},
    }
    
    \bib{Bourgain_Brezis_Mironescu_2001}{article}{
      author={Bourgain, Jean},
      author={Brezis, Haim},
      author={Mironescu, Petru},
      title={Another look at Sobolev spaces},
      conference={
        title={Optimal control and partial differential equations},
      },
      book={
        publisher={IOS, Amsterdam},
      },
      date={2001},
      pages={439--455},
    }
    
    %\bib{Bourgain_Brezis_Mironescu_2004}{article}{
    %   author={Bourgain, Jean},
    %   author={Brezis, Haim},
    %   author={Mironescu, Petru},
    %   title={$H^{1/2}$ maps with values into the circle: minimal connections,
    %   lifting, and the Ginzburg-Landau equation},
    %   journal={Publ. Math. Inst. Hautes \'Etudes Sci.},
    %   number={99},
    %   date={2004},
    %   pages={1--115},
    %   issn={0073-8301},
    %%    review={\MR{2075883}},
    %}
    
    \bib{Bousquet_Ponce_Van_Schaftingen_2014}{article}{
      author={Bousquet, Pierre},
      author={Ponce, Augusto C.},
      author={Van Schaftingen, Jean},
      title={Strong approximation of fractional Sobolev maps},
      journal={J. Fixed Point Theory Appl.},
      volume={15},
      date={2014},
      number={1},
      pages={133--153},
      issn={1661-7738},
      %    review={\MR{3282785}},
    }

    \bib{Brezis_2002}{article}{
      author={Brezis, H.},
      title={How to recognize constant functions. A connection with Sobolev
        spaces},
      journal={Uspekhi Mat. Nauk},
      volume={57},
      date={2002},
      number={4(346)},
      pages={59--74},
      issn={0042-1316},
      translation={
        journal={Russian Math. Surveys},
        volume={57},
        date={2002},
        number={4},
        pages={693--708},
        issn={0036-0279},
      },
    }
    
    \bib{gnp}{article}{
      author={Brezis, Ha\"\i m},
      author={Mironescu, Petru},   
      Journal = {J. Evol. Equ.},
      Title = {Gagliardo--Nirenberg, composition and products in fractional {S}obolev spaces},
      %       Note = {Dedicated to the memory of Tosio Kato},
      Number = {4},
      Pages = {387--404},     
      Volume = {1},
      Year = {2001},
    }
    
    \bib{Brezis_Mironescu_2015}{article}{
      author={Brezis, Ha\"\i m},
      author={Mironescu, Petru},
      title={Density in $W^{s,p}(\Omega;N)$},
      journal={J. Funct. Anal.},
      volume={269},
      date={2015},
      number={7},
      pages={2045--2109},
      issn={0022-1236},
      %    review={\MR{3378869}},
    }
    
    \bib{bmbook}{book}{
      author={Brezis, Ha\"\i m},
      author={Mironescu, Petru},
      title={Sobolev maps with values into the circle},
      note={In preparation},
    }
    
    \bib{Hardt_Kinderlehrer_Lin_1990}{article}{
      author={Hardt, Robert},
      author={Kinderlehrer, David},
      author={Lin, Fang Hau},
      title={The variety of configurations of static liquid crystals},
      conference={
        title={Variational methods},
        address={Paris},
        date={1988},
      },
      book={
        series={Progr. Nonlinear Differential Equations Appl.},
        volume={4},
        publisher={Birkh\"auser}, 
        address={Boston, Mass.},
      },
      date={1990},
      pages={115--131},
      %    review={\MR{1205150}},
    }
    
    \bib{Hatcher_2002}{book}{
      author={Hatcher, Allen},
      title={Algebraic topology},
      publisher={Cambridge University Press}, 
      address={Cambridge},
      date={2002},
      pages={xii+544},
      isbn={0-521-79160-X},
      isbn={0-521-79540-0},
    }
    
    %\bib{Lebedev1965}{book}{
    %   author={Lebedev, N. N.},
    %   title={Special functions and their applications},
    %   edition={revised English edition},
    %   contribution={translated and edited by R. A. Silverman},
    %   publisher={Prentice-Hall}, 
    %   address={Englewood Cliffs, N.J.},
    %   date={1965},
    %%    pages={xii+308},
    %%    review={\MR{0174795 (30 \#4988)}},
    %}
    
    \bib{Lee_1997}{book}{
      author={Lee, John M.},
      title={Riemannian manifolds},
      series={Graduate Texts in Mathematics},
      volume={176},
      subtitle={An introduction to curvature},
      publisher={Springer},
      address={New York},
      date={1997},
      pages={xvi+224},
      isbn={0-387-98271-X},
      %    review={\MR{1468735}},
    }  
    \bib{Lee_2011}{book}{
      author={Lee, John M.},
      title={Introduction to topological manifolds},
      series={Graduate Texts in Mathematics},
      volume={202},
      edition={2},
      publisher={Springer}, 
      address={New York},
      date={2011},
      pages={xviii+433},
      isbn={978-1-4419-7939-1},
      %    review={\MR{2766102}},
      doi={10.1007/978-1-4419-7940-7},
    }
    
    \bib{Mironescu2008}{article}{
      author={Mironescu, Petru},
      title={Lifting default for {$\mathbb{S}^1$}-valued maps},
      date={2008},
      ISSN={1631-073X},
      journal={C. R. Math. Acad. Sci. Paris},
      volume={346},
      number={19--20},
      pages={1039\ndash 1044},
      doi={10.1016/j.crma.2008.08.001},
      %       review={\MR{2462045}},
    }
    
    \bib{Mironescu_preprint}{article}{
      author={Mironescu, Petru},
      title={Lifting of \(\mathbb{S}^1\)-valued maps in sums of Sobolev spaces},
      eprint={https://hal.archives-ouvertes.fr/hal-00747663},
    }

    \bib{Mironescu_2010}{article}{
      author={Mironescu, Petru},
      title={\(\Sset^1\)-valued Sobolev mappings},
      language={Russian},
      journal={Sovrem. Mat. Fundam. Napravl.},
      volume={35},
      date={2010},
      pages={86--100},
      issn={2413-3639},
      translation={
        journal={J. Math. Sci. (N.Y.)},
        volume={170},
        date={2010},
        number={3},
        pages={340--355},
        issn={1072-3374},
      },
      %   review={\MR{2752641}},
      doi={10.1007/s10958-010-0090-z},
    }
    
    \bib{m_hardy}{article}{
      author={Mironescu, Petru},
      title={The role of the Hardy type inequalities in the theory of function spaces},
      journal={Rev. Roumaine Math. Pures Appl.}, 
      volume={63},
      date={2018},
      number={4},
      pages={447--525},
    }
    
    \bib{Mironescu_VanSchaftingen_Traces}{article}{
      author={Mironescu, Petru},
      author={Van Schaftingen, Jean},
      title={The problem of extension of traces for Sobolev mappings into a manifold},
      note={in preparation},
    }
    
    \bib{Monteil_Van_Schaftingen}{article}{
      author={Monteil, Antonin},
      author={Van Schaftingen, Jean},
      title={Uniform boundedness principles for Sobolev maps into manifolds},
      journal={Ann. Inst. H. Poincar\'{e} Anal. Non Lin\'{e}aire},
      volume={36},
      date={2019},
      number={2},
      pages={417--449},
      issn={0294-1449},
      %      review={\MR{3913192}},
      doi={10.1016/j.anihpc.2018.06.002},
    }
    
    \bib{Mucci_2009}{article}{
      author={Mucci, Domenico},
      title={Strong density results in trace spaces of maps between manifolds},
      journal={Manuscripta Math.},
      volume={128},
      date={2009},
      number={4},
      pages={421--441},
      issn={0025-2611},
      %    review={\MR{2487434}},
    }

    \bib{Mucci_2010}{article}{
      author={Mucci, Domenico},
      title={Sobolev maps into the projective line with bounded total
        variation},
      journal={Confluentes Math.},
      volume={2},
      date={2010},
      number={2},
      pages={181--216},
      issn={1793-7442},
      %   review={\MR{2669042}},
      doi={10.1142/S179374421000017X},
    }

    %\bib{Munkres_1975}{book}{
    %   author={Munkres, James R.},
    %   title={Topology: a first course},
    %   publisher={Prentice-Hall},
    %   address={Englewood Cliffs, N.J.},
    %   date={1975},
    %   pages={xvi+413},
    %%    review={\MR{0464128}},
    %}
    
    %\bib{Rubin1983}{article}{
    %   author={Rubin, B. S.},
    %   title={One-dimensional representation, inversion and certain properties
    %   of Riesz potentials of radial functions},
    %   language={Russian},
    %   journal={Mat. Zametki},
    %   volume={34},
    %   date={1983},
    %   number={4},
    %   pages={521--533},
    %   issn={0025-567X},
    %   translation={
    %     journal={Math. Notes},
    %     volume={34},
    %     date={1983}, 
    %     number={3--4}, 
    %     pages={751--757},
    %   }
    %}
    
  \end{biblist}
  
\end{bibdiv}

\end{document}